\numberwithin{equation}{section}
\newtheorem{theorem}[equation]{Theorem}
\newtheorem{proposition}[equation]{Proposition}
\newtheorem{prop}[equation]{Proposition}
\newtheorem{lem}[equation]{Lemma}
\newtheorem{lemma}[equation]{Lemma}
\newtheorem{corollary}[equation]{Corollary}
\theoremstyle{remark}
\newtheorem{remark}[equation]{Remark}
\theoremstyle{definition}
\newtheorem{definition}[equation]{Definition}
\newtheorem{example}[equation]{Example}
\def\XXint#1#2#3{{\setbox0=\hbox{$#1{#2#3}{\int}$}
	\vcenter{\hbox{$#2#3$}}\kern-.5\wd0}}
\newcommand{\Abn}{\operatorname{Abn}}
\newcommand{\Ad}{\operatorname{Ad}}
\newcommand{\ad}{\operatorname{ad}}
\newcommand{\R}{\mathbb R}
\newcommand{\g}{\mathfrak{g}}
\newcommand{\h}{\mathfrak{h}}
\renewcommand{\ker}{\operatorname{Ker}}
\newcommand{\Lie}{\operatorname{Lie}}
\renewcommand{\span}{\operatorname{span}}
\newcommand{\End}{\operatorname{End}}
\newcommand{\dd}{\operatorname{d}}
\newcommand{\acts}{\curvearrowright}
\def\eps{\epsilon}
\begin{document}

\title{Sard Property for the endpoint map on some 
Carnot
 groups}

\author[Le Donne]{Enrico Le Donne}
\address[Le Donne]{
Department of Mathematics and Statistics, University of Jyv\"askyl\"a, 40014 Jyv\"askyl\"a, Finland}
\email{enrico.ledonne@jyu.fi}

\author[Montgomery]{Richard Montgomery}
\address[Montgomery]{Mathematics Department, University of California, 
4111 McHenry
Santa Cruz, CA 95064, USA}
\email{rmont@ucsc.edu}

\author[Ottazzi]{Alessandro Ottazzi}
\address[Ottazzi]{Universit\`a di Trento, Trento 38123 Italy
\&
University of New South Wales, NSW 2052 Australia.}
\email{alessandro.ottazzi@gmail.com}

\author[Pansu]{Pierre Pansu}
\address[Pansu]{
Universit\'e Paris-Sud
B\^atiment 425, 91405 Orsay, France}
\email{Pierre.Pansu@math.u-psud.fr}

\author[Vittone]{Davide Vittone}
\address[Vittone]{Universit\`a di Padova, Dipartimento di Matematica Pura ed
Applicata, via Trieste 63, 35121 Padova, Italy
\&
Universit\"at Z\"urich, Institut f\"ur Mathematik,
Winterthurerstrasse 190, 8057 Z\"urich, Switzerland}
\email{vittone@math.unipd.it}

 \keywords{Sard's property, endpoint map, abnormal curves, Carnot groups, polarized
 groups, sub-Riemannian geometry.}

\renewcommand{\subjclassname}{%
 \textup{2010} Mathematics Subject Classification}
\subjclass[]{ 
53C17, 
22F50, 
 22E25 
14M17. 
}

\date{\today}

\begin{abstract} In Carnot-Carath\'eodory or sub-Riemannian geometry, one of the major open problems
is whether the conclusions of Sard's theorem 
holds for the endpoint map, a canonical map from an infinite-dimensional  path space to the
underlying finite-dimensional manifold.  
The set of critical values for the endpoint map is also known as 
abnormal set, being 
 the set of endpoints of
abnormal extremals leaving the base point. 
We prove that a strong version of Sard's property holds 
for all step-$2$ Carnot groups  and several other classes of   Lie groups endowed with left-invariant distributions. 
Namely, we prove that the abnormal set
lies in a proper analytic subvariety.
In doing so we examine several  characterizations of the abnormal set in the case of Lie groups.
\end{abstract}


\maketitle

\tableofcontents
\section{Introduction}   					
 
Let $G$ be a connected Lie group with 
 Lie algebra $\g$. 
Let $V\subseteq \g$ be a subspace.
Following Gromov \cite[Sec.~0.1]{Gromov1}, we shall call the pair $(G, V)$ a {\em polarized group}.
Carnot groups are examples of polarized groups where   $V$ is the first layer of their stratification.
To any  polarized group $(G, V)$ one associates the
endpoint map:
\begin{eqnarray*}
\End: L^2([0,1],V)& \rightarrow &G\\
u\qquad&\mapsto& \gamma_u(1),
 \end{eqnarray*}
where
$ \gamma_u$ is the curve on $G$ leaving from the origin $e \in G$ with derivative
$ (\dd L_{\gamma(t)} )_e  u (t)$.

The abnormal set of $(G,V)$ 
is the subset $\Abn(e) \subset G$ of all
singular values of the endpoint map. Equivalently, $\Abn(e)$ is the union of all {\em abnormal curves} passing through the origin (see Section~\ref{sec:abnormal}).
If 
the abnormal set
has measure $0$,
then $(G,V)$ is said to satisfy the 
{\em Sard Property}.
Proving the Sard Property in the general context of polarized manifolds is one of the major open problems in sub-Riemannian geometry, see the questions in \cite[Sec.~10.2]{Montgomery} and Problem III in \cite{Agrachev_problems}.
In this paper, we will focus on
the following stronger versions of Sard's property in the context of  groups.

\begin{definition}[Algebraic and Analytic Sard Property]
We say that a polarized group $(G,V)$ satisfies the 
{\em Algebraic}  (respectively, {\em Analytic}) {\em Sard Property}
if
its abnormal set $\Abn(e)$ is contained in a proper real algebraic (respectively, analytic) subvariety of $G$.
\end{definition}
Our main results are summarized by: 
\begin{theorem}\label{main_thm1}
The following Carnot groups satisfy the Algebraic Sard Property:
\begin{enumerate}
\item
Carnot groups of step $2$;
\item
The free-nilpotent group of rank $3$ and step $3$;
\item
The free-nilpotent group of rank $2$ and step $4$;
\item
The
 nilpotent part of the Iwasawa decomposition of any semisimple Lie group
equipped with the distribution 
defined by the sum of the simple root spaces.
\end{enumerate}
The following polarized groups satisfy the Analytic Sard Property:
\begin{enumerate}
\item[(5)]
Split semisimple Lie groups 
equipped with the distribution given by the subspace of the Cartan decomposition with negative eigenvalue.
\item[(6)]
Split semisimple Lie groups 
equipped with the distribution defined by the sum of the nonzero root spaces.
\end{enumerate}
\end{theorem}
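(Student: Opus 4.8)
The strategy is a two-step reduction. First, using the Pontryagin Maximum Principle, I would translate the definition of $\Abn(e)$ — a priori a set of critical values of an infinite-dimensional map — into a purely Lie-algebraic condition: a point $g$ lies in $\Abn(e)$ exactly when $g=\gamma_u(1)$ for a horizontal curve $\gamma_u$ carrying a nowhere-vanishing dual curve $\xi\colon[0,1]\to\g^*$ with $\xi(t)\in V^\perp$ for a.e.\ $t$ and $\dot\xi(t)=\ad^*_{u(t)}\xi(t)$. Differentiating the pointwise constraint $\xi(t)\in V^\perp$ forces, for a.e.\ $t$, the control $u(t)$ into the subspace $W_{\xi(t)}=\{v\in V:\langle\xi(t),[v,w]\rangle=0\text{ for all }w\in V\}$; iterating the differentiation produces a nested family of conditions on $u$, and in all the cases treated here these pin the velocity down, instant by instant, to a proper admissible subspace of $V$ selected by the covector (already for step $2$ one checks $W_{\xi(t)}$ is proper as soon as $\xi(t)\ne0$, because $V$ bracket-generates $\g$). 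The second step is then to show that, over the relevant projectivized finite-dimensional space of covectors, the curves obtained in this way sweep out only a proper subvariety of $G$; this is carried out family by family.

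For step-$2$ groups (part (1)) this is clean: since $\ad^*_u$ annihilates the degree-two layer of $\g^*$, the $V_2^*$-component of $\xi$ is constant, the covector is a fixed direction $[c]\in\mathbb P(V_2^*)$, $W_{\xi(t)}\equiv W_c:=\mathrm{rad}(b_c)$ with $b_c(v,w)=\langle c,[v,w]\rangle$, and the abnormal curve stays in the subgroup $\exp(\mathfrak h_c)$ with $\mathfrak h_c=W_c\oplus[W_c,W_c]$; hence $\Abn(e)=\bigcup_{[c]\in\mathbb P(V_2^*)}\exp(\mathfrak h_c)$. I would bound the dimension of this union by realizing it as the image under $([c],X)\mapsto\exp(X)$ of the incidence variety $\{([c],X):X\in\mathfrak h_c\}$ and stratifying $\mathbb P(V_2^*)$ by $r=\dim W_c$: on the stratum of a given $r$ the fibre has dimension at most $r+\binom r2$, while the classical rank stratification of skew forms bounds the dimension of the stratum from above, and since a nonzero skew form on $V_1$ has radical of dimension at most $\dim V_1-2$ the resulting total stays strictly below $\dim G$. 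Thus the Zariski closure of the image is a proper algebraic subvariety, and $\exp$ — a polynomial diffeomorphism on a Carnot group — carries it to the desired subvariety of $G$.

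For parts (2)--(6) the covector genuinely evolves, so one must integrate the constrained system $\dot\xi=\ad^*_u\xi$, $\xi\in V^\perp$, $u\in W_\xi$, and here I would exploit the specific structure. For the free nilpotent groups of rank $3$/step $3$ and rank $2$/step $4$ this is a finite explicit computation with the structure constants, pinning down for each covector direction a proper subgroup or subvariety of $G$ carrying the corresponding abnormal curves, after which the same incidence–dimension argument applies. For the Iwasawa $N$ and the split semisimple cases one organizes the computation along the root-space decomposition, showing the admissible directions $W_\xi$ remain inside a bounded collection of root spaces and again assembling the abnormal curves into a proper subvariety. The split in the conclusions is then forced: when $G$ is nilpotent (cases (1)--(4)) $\exp$ is a polynomial diffeomorphism, so the subvariety is algebraic; when $G$ is split semisimple (cases (5)--(6)) $\exp$ is merely real-analytic, so one only obtains an analytic subvariety.

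The main obstacle I expect is the second step outside the step-$2$ case: controlling, and proving semialgebraicity (resp.\ subanalyticity) of, the set swept out in $\g$ by all solution curves of the constrained ODE, since the admissible subspace $W_{\xi(t)}$ now moves with $t$ and the curves are no longer confined to a single subgroup. Making the dimension bookkeeping close — and, already for step $2$, turning the rank-stratification estimate into a rigorous bound rather than a heuristic — will require genuine use of the fine structure (normal forms for the structure constants of the free nilpotent examples, root combinatorics for the semisimple ones), as will the concluding verification that every incidence set involved is semialgebraic (resp.\ subanalytic), so that its image under $\exp$ has the claimed closure.
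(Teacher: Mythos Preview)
Your approach to part (1) is essentially the paper's: you correctly identify that in step $2$ the covector is constant, that each abnormal lies in the Carnot subgroup generated by the radical $W_c$ of the skew form $b_c$, and that a dimension count over $\mathbb P(V_2^*)$ finishes the job. The paper packages this slightly differently (it reduces to free step-$2$ groups and applies classical Sard to a finite-dimensional map $f\circ\Phi_m$), but the underlying mechanism is the same.

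For parts (2)--(6), however, there is a genuine gap, and the paper's route is entirely different from yours. Your plan is to follow the evolving covector, confine each abnormal to a ``proper subgroup or subvariety'' selected by the covector direction, and then run an incidence--dimension argument. The paper shows this cannot work as stated for case (2): in the free rank-$3$ step-$3$ group $F_{3,3}$ there exist abnormal curves whose projection to $V_1$ does not lie in any proper subspace, hence the curve lies in no proper subgroup of $G$ (the paper exhibits such curves explicitly, with control $u(t)=(Ae^{at},Be^{bt},Ce^{ct})$ for distinct nonzero $a,b,c$ summing to zero). So the ``subgroup'' half of your dichotomy fails, and the ``subvariety'' half leaves you exactly at the obstacle you flag: controlling the union over all covector data of the integral curves of a genuinely time-dependent constrained ODE, with no a priori reason the sweep is thin.

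The paper sidesteps this entirely. Rather than describing $\Abn(e)$, it proves its \emph{complement} is a nonempty Zariski-open (resp.\ analytic-open) set by exhibiting a single regular value. The key observation is that $(\dd R_g)_eV+(\dd L_g)_eV+\mathcal S(g)\subset\mathrm{Im}(\dd\End_u)$ whenever $g=\gamma_u(1)$, where $\mathcal S$ is the space of contact vector fields vanishing at $e$. One then enlarges $\g$ to a graded prolongation $\tilde\g=\mathfrak h\oplus\g$ (with $\mathfrak h$ acting by strata-preserving derivations, e.g.\ $\mathfrak{gl}(V_1)$ for free groups, or the opposite parabolic for Iwasawa $N$) and checks the purely algebraic criterion: there exists $\xi\in\tilde\g$ with $\mathfrak h+V_1+\ad_\xi(\mathfrak h+V_1)=\tilde\g$. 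For $F_{2,4}$ and $F_{3,3}$ this is verified by writing down an explicit $\xi$; for Iwasawa $N$ one uses the longest Weyl element; for the split semisimple cases (5)--(6) one takes $\xi$ a regular element of $\mathfrak a$, resp.\ a sum of simple root vectors. Since the criterion is polynomial (resp.\ analytic) in $g$, one non-solution forces $\Abn(e)$ into a proper algebraic (resp.\ analytic) variety. This is the missing idea in your proposal: you never need to parametrize the abnormal curves at all.
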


Earlier work \cite{Montgomery_Singular_extremals_on_Lie_groups} allows us

\begin{enumerate}
\item[(7)] {\em compact semisimple Lie groups equipped with the distribution defined by the sum of the nonzero root spaces,
(i.e., the     orthogonal  to the maximal torus relative to a bi-invariant metric).}
 \end{enumerate}

Case (1) will be proved reducing the problem to the case of a  smooth map between finite-dimensional manifolds  and  applying the classical Sard Theorem to this map.
The proof will crucially use the fact that in a Carnot group of step $2$ each abnormal curve  is   contained in a proper subgroup.
 This latter property may fail for step 3,  see Section~\ref{not lying in any subgroup}. 
 However, a similar strategy together with the notion of {\em abnormal varieties}, see \eqref{abnormal_variety}, might yield a  proof of  Sard Property for general Carnot groups. 
 
The proof of  
cases (2)-(6) is   based on the observation that,
if $\mathcal X $ is a family of contact vector fields (meaning infinitesimal symmetries of the distribution)  vanishing at the identity, then
for any horizontal curve $\gamma$ leaving from the origin with control $u$  we have
\begin{equation*}
(R_{\gamma(1)})_*V+(L_{\gamma(1)})_* V + \mathcal X(\gamma(1))\subset {\rm Im}(\dd \End_u)\subset T_{\gamma(1)}G.
\end{equation*}
Therefore if $g\in G$ is such that
\begin{equation}\label{condition}  (R_g)_*V+(L_g)_* V + \mathcal X(g)=T_g G,
\end{equation}
then $g$ is not a singular value of the endpoint map. 
In fact, if 
\eqref{condition}
is describable as a non-trivial system of polynomial inequations for $g$,
then 
$(G,V)$ has  the Algebraic Sard Property. 
Case (3) was
already proved in \cite{LLMV2} by using an equivalent technique.

Equation \eqref{condition} does not have solutions in the following cases:  
 free-nilpotent groups of rank $2$ and step  $\geq 5$, 
 free-nilpotent groups of rank $3$ and step  $\geq4$,
 free-nilpotent groups of rank      $\geq4$  and step     $\geq3$.  Here Sard's property remains an open problem.

We further provide a more quantitative version of Sard's property for free-nilpotent groups of  step $2$.
\begin{theorem}\label{thm:free_step_two}
In any free-nilpotent group of  step $2$ 
the abnormal set is contained in an affine algebraic subvariety of codimension $3$.
\end{theorem}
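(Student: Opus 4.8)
The plan is to compute explicitly, in a free-nilpotent group $G$ of step $2$ and rank $n$, the space on the left-hand side of \eqref{condition} with $\mathcal X$ taken to be the (trivial) empty family, i.e.\ to understand exactly for which $g$ one has $(R_g)_*V+(L_g)_*V=T_gG$, and to show the failure locus is an affine algebraic subvariety of codimension $3$. Recall that $\g=V\oplus [\g,\g]$ with $\dim V=n$ and $\dim[\g,\g]=\binom n2$, that $[\g,\g]$ is central, and that the group law is the Baker--Campbell--Hausdorff formula truncated at the second bracket. Using exponential coordinates $(x,z)\in V\times \Lambda^2 V$, left and right translations differ only by the sign of the bracket term, so for $g=(x,z)$ and $v\in V$ one has $(L_g)_*v=(v,\tfrac12 x\wedge v)$ and $(R_g)_*v=(v,-\tfrac12 x\wedge v)$ as tangent vectors at $g$ (identifying $T_gG\cong V\oplus\Lambda^2V$). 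Hence $(R_g)_*V+(L_g)_*V$ contains all of $V$ (the ``horizontal'' directions) together with the subspace $x\wedge V\subseteq\Lambda^2V$ of the vertical directions; so the sum equals $T_gG$ precisely when $x\wedge V=\Lambda^2 V$.

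The heart of the matter is therefore the linear-algebra fact: for $x\in V$ with $\dim V=n$, the map $V\to\Lambda^2V$, $v\mapsto x\wedge v$, has image of dimension $n-1$ when $x\neq0$ and dimension $0$ when $x=0$; in particular $x\wedge V=\Lambda^2V$ is impossible as soon as $n\ge 3$ (and for $n\le 2$ the group is abelian or Heisenberg, where the result is classical/vacuous). So \eqref{condition} with $\mathcal X=\{0\}$ has no solutions, and one must genuinely use a nontrivial family of contact vector fields. I would invoke the explicit description of $\mathcal X$ for free step-$2$ groups: the dilation vector field is not admissible (it does not vanish at $e$ in the required sense — actually it does vanish at $e$, so one must be careful), but the relevant contact fields are the right-invariant vector fields associated to $V$ together with the linear vector fields induced by $\mathfrak{so}$-type symmetries. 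Concretely, for the free step-$2$ group the contact algebra vanishing at $e$ acts on the fiber $\Lambda^2V$ over $g=(x,z)$ through $\mathfrak{gl}(V)$ acting on $\Lambda^2V$, shifted by the orbit of $x$; combining this with $x\wedge V$ one shows that $(R_g)_*V+(L_g)_*V+\mathcal X(g)$ fills $T_gG$ unless $x$ lies in a proper subvariety.

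The cleanest route, and the one I would pursue, is to parametrize abnormal curves directly rather than via \eqref{condition}: in a step-$2$ Carnot group an abnormal curve through $e$ with covector $\lambda\in\Lambda^2V^*$ (the vertical part of the initial covector, the horizontal part being forced to vanish along abnormals) satisfies $\dot x(t)\in\ker(\lambda)\subseteq V$, where $\lambda$ is viewed as a skew form on $V$; hence the projection of the curve to $V$ stays in the fixed subspace $\ker\lambda$, which has codimension at least $2$ (being the kernel of a nonzero skew form, hence even corank $\ge2$) — wait, more precisely one gets that $x(t)$ lies in an affine subspace determined by $\lambda$, and running over all nonzero $\lambda$ the union of these is contained in the set of $(x,z)$ with $x\wedge x'=0$ for... — here I would instead directly show $\Abn(e)\subseteq\{(x,z): \text{rank conditions}\}$. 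Specifically, for each nonzero skew form $\lambda$ the abnormal curves with that covector have horizontal projection in $\ker\lambda$, and the vertical endpoint $z$ is then constrained to the image of $\Lambda^2(\ker\lambda)$ plus a correction; the union over all $\lambda$ of these loci is a constructible set, and a dimension count (each $\lambda$ contributes a family of dimension $\dim\ker\lambda + \dim\Lambda^2\ker\lambda$, while $\lambda$ ranges over a space of dimension $\binom n2-1$ projectively) shows the Zariski closure has codimension $3$ in $G$, the ``$3$'' coming from the generic corank-$2$ drop in the horizontal part contributing $2$ and the skew-form normalization contributing the extra $1$. I expect the main obstacle to be making the dimension count uniform: the stratification of skew forms by rank is not equidimensional in its fibers, so one must check that the top-dimensional stratum (forms of maximal rank $n$ or $n-1$) is the one that governs the codimension of $\Abn(e)$, and handle the low-rank strata separately to confirm they do not produce a component of smaller codimension. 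Once that is in hand, taking Zariski closure gives the affine algebraic subvariety of codimension $3$ asserted in Theorem~\ref{thm:free_step_two}.
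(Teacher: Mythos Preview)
Your second approach---parametrizing abnormal curves by the covector $\lambda$ and observing that the horizontal projection stays in $\ker\lambda$---is the correct one, and it is essentially the route the paper takes. However, the proposal as written does not complete the argument, and the dimension count you sketch is not quite right as stated.

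The precise statement you need is: for each nonzero $\lambda\in\Lambda^2V^*$, every abnormal curve from $e$ with covector $\lambda$ lies entirely in the Carnot subgroup $\ker\lambda\oplus\Lambda^2(\ker\lambda)$; since $\lambda\neq0$ is skew, $\ker\lambda$ has codimension at least $2$ in $V$, so this subgroup sits inside $W\oplus\Lambda^2 W$ for some $W\in Gr(n,n-2)$. Thus $\Abn(e)\subseteq\bigcup_{W\in Gr(n,n-2)}W\oplus\Lambda^2 W$, and in fact equality holds. This is the key containment, and your ``plus a correction'' for the vertical part should simply be removed: since the curve starts at $e$, there is no correction.

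The dimension count should \emph{not} be done by letting $\lambda$ range over $\mathbb{P}(\Lambda^2V^*)$, as you propose: the generic $\lambda$ has large rank and tiny kernel, and the map $\lambda\mapsto\ker\lambda$ has high-dimensional fibers, so summing $\dim\ker\lambda+\dim\Lambda^2(\ker\lambda)+(\binom{n}{2}-1)$ gives nonsense. Instead parametrize by the codimension-$2$ subspace $W$ itself: $\dim Gr(n,n-2)=2(n-2)$ and $\dim(W\oplus\Lambda^2W)=(n-2)+\binom{n-2}{2}=\tfrac{(n-1)(n-2)}{2}$, and adding these gives $\tfrac{n(n+1)}{2}-3=\dim G-3$. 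This is the ``$3$'' you are after; your heuristic ``corank-$2$ drop contributes $2$, projectivization contributes $1$'' is not a correct accounting. One then checks (e.g., via a Darboux normal form for bivectors, or by exhibiting the union as a projection of an algebraic incidence variety) that the union is semi-algebraic, hence contained in an algebraic set of the same dimension.

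The first half of your proposal---trying to use \eqref{condition} with $\mathcal X$ trivial, and then with contact fields---should simply be dropped. You correctly compute that $(R_g)_*V+(L_g)_*V$ misses most of the vertical space, and the contact-field discussion is too vague to salvage. The paper does not use \eqref{condition} for this theorem at all.
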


 Agrachev, Lerario, and Gentile  
 previously 
  proved that in a  {\em generic}
Carnot group of  step $2$  
the {\em generic} point in the second layer is not in the abnormal set, see \cite[Theorem 9]{Agrachev_Lerario_Gentile}.


There are several  papers
that give a bound on the size of 
the set 
of all those points $\End(u)$ where $u$ is a critical point with the extra property that $\gamma_u$ is
 {\em length minimizing} for a fixed sub-Riemannian structure.
 A very general result \cite{Agrachev09} by Agrachev based on techniques of 
  Rifford and Tr\'elat 
  \cite{Rifford_Trelat} 
  states that this set 
  is   contained in a closed nowhere dense set, for general sub-Riemannian manifolds.

 In this direction, in step 3 Carnot groups equipped with a sub-Riemannian structure on the first layer, we bound the size of the  
set $\Abn^{lm}(e)$ of points connected to the origin by locally length minimizing abnormal curves. 
Our result uses ideas of  Tan  and Yang 
 \cite{Tan_Yang_step_3} and the fact that 
in an arbitrary polarized Lie group the Sard Property holds for normal-abnormal curves, see Lemma~\ref{lemma:abn:nor}.

\begin{theorem}\label{thm:step_three}

 Let $G$ be a  sub-Riemannian Carnot group  of step 3. 
 The Sub-analytic Sard Property holds for locally length minimizing abnormal curves. Namely, the set $\Abn^{lm}(e)$ is contained in a sub-analytic set of codimension at least 1.
 
 \end{theorem}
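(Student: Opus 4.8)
The plan is to split $\Abn^{lm}(e)$ into the set of endpoints of \emph{normal-abnormal} locally length minimizing curves issuing from $e$ and the set $A_{sa}$ of endpoints of \emph{strictly abnormal} ones. For the normal-abnormal curves, Lemma~\ref{lemma:abn:nor} already confines their endpoints to a sub-analytic set of codimension at least $1$, so everything reduces to bounding $A_{sa}$.

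For $A_{sa}$ I would invoke the Goh condition, the second order necessary condition for strict abnormal minimality (see \cite{Montgomery}). Left-trivializing the abnormal covector $\lambda$ to a curve $\xi$ in $\g^*$ and writing $\g=V_1\oplus V_2\oplus V_3$, the Goh condition says that $\xi(t)$ annihilates $V_1\oplus V_2=V_1+[V_1,V_1]$ for every $t$, i.e.\ $\xi(t)\in V_3^*$. Here the step-$3$ hypothesis becomes decisive, in the spirit of Tan and Yang \cite{Tan_Yang_step_3}: differentiating the relations $\xi(t)|_{V_1}=\xi(t)|_{V_2}=0$ along the adjoint equation $\dot\xi=\operatorname{ad}^{*}_{u(t)}\xi$ and using that every bracket of weight $\geq 4$ vanishes, one finds $\dot\xi\equiv 0$, so $\xi(t)\equiv\xi_0$ is a fixed nonzero element of $V_3^*$, and simultaneously $\xi_0([u(t),w])=0$ for all $w\in V_2$, that is
\begin{equation*}
u(t)\in W_{\xi_0}:=\{v\in V_1:\ \xi_0([v,w])=0\ \text{for all }w\in V_2\}.
\end{equation*}
Since $V_3=[V_1,V_2]$ and $\xi_0|_{V_3}\neq 0$, the subspace $W_{\xi_0}$ is proper in $V_1$; hence the whole curve lies in the proper Carnot subgroup $G_{\xi_0}:=\exp(\langle W_{\xi_0}\rangle_{\mathrm{Lie}})$, whose Lie algebra has dimension at most $\dim G-2$ (one loses a dimension in $V_1$, and, since $[W_{\xi_0},V_2]\subseteq\ker\xi_0\cap V_3$, a dimension in $V_3$ as well). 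Therefore $A_{sa}\subseteq N:=\bigcup_{[\xi_0]\in\mathbb{P}(V_3^*)}G_{\xi_0}$.

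It remains to see that $\Abn^{lm}(e)$ lies in a sub-analytic set of codimension at least $1$. Sub-analyticity is soft: $\exp\colon\g\to G$ is a polynomial diffeomorphism and $[\xi_0]\mapsto\langle W_{\xi_0}\rangle_{\mathrm{Lie}}$ is a semialgebraic family of subalgebras, so the incidence set $\{([\xi_0],g):\log g\in\langle W_{\xi_0}\rangle_{\mathrm{Lie}}\}$ is semialgebraic and, $\mathbb{P}(V_3^*)$ being compact, its image $N$ in $G$ is a closed semialgebraic set; combined with the normal-abnormal case, $\Abn^{lm}(e)$ sits inside the sub-analytic set $N\cup(\text{the set of Lemma~\ref{lemma:abn:nor}})$. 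To upgrade to codimension at least $1$ I would combine this with the nowhere-density theorem for endpoints of locally length minimizing singular curves \cite{Agrachev09}: controlling $A_{sa}\cap G_{\xi_0}$ by running the same Goh reduction inside the lower-dimensional subgroup $G_{\xi_0}$, an induction on $\dim G$ yields that $\Abn^{lm}(e)$ is itself sub-analytic, and a nowhere-dense sub-analytic set has codimension at least $1$. (For many step-$3$ algebras a direct dimension count of $N$ already suffices and Agrachev's theorem is not needed.)

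The main obstacle is the middle step: proving that in step $3$ the Goh condition rigidifies the abnormal covector to a constant $\xi_0\in V_3^*$ and confines the control to $W_{\xi_0}$. This is precisely where the step-$3$ bracket relations enter, where the argument breaks down in higher step, and what we import from \cite{Tan_Yang_step_3}. A secondary difficulty is the final codimension bookkeeping: when $\dim V_2<\dim V_3$ one cannot conclude that $N$ is thin by a naive dimension count, so one genuinely needs the interplay of the sub-analytic description with Agrachev's nowhere-density and the reduction inside the subgroups $G_{\xi_0}$.
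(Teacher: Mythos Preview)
Your Goh reduction is correct and parallels the paper's: a strictly abnormal locally length minimizing curve in step~$3$ has its left-trivialized covector constant equal to some $\xi_0\in V_3^*\setminus\{0\}$ and its control confined to the proper subspace $W_{\xi_0}\subset V_1$, so the curve sits in the proper Carnot subgroup $G_{\xi_0}$ (this is exactly the content of the paper's Remarks~\ref{rem:anniGoh} and~\ref{rem:W:linear}). The gap is in your final codimension step. Your induction does not show that $\Abn^{lm}(e)$ is itself sub-analytic: a curve contributing to $A_{sa}$ and lying in $G_{\xi_0}$ is still locally length minimizing there, but it need not be \emph{abnormal in $G_{\xi_0}$}; if it is regular in $G_{\xi_0}$ its endpoint is not in $\Abn^{lm}_{G_{\xi_0}}(e)$ and the inductive hypothesis tells you nothing about it. All you actually retain is the containment $A_{sa}\subseteq N$, and when $\dim V_3\geq 3$ the set $N$ can equal $G$. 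Agrachev's nowhere-density theorem does not rescue this: it gives neither sub-analyticity of $\Abn^{lm}(e)$ nor containment in a \emph{proper} sub-analytic set, so the implication ``nowhere-dense sub-analytic $\Rightarrow$ codimension $\geq 1$'' has no premise to apply to.

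The paper closes this gap by exploiting the dichotomy you glossed over. Inside $G_{\xi_0}$ the curve, being locally length minimizing, is either \emph{normal} there (and one stops) or strictly abnormal there (and one repeats the Goh reduction). The iteration terminates with the curve normal in some Carnot subgroup $G'<G$, yielding $\Abn^{lm}_{str}(e)\subseteq\bigcup_{G'<G}\Abn^{nor}_{G'}(e)$. Carnot subgroups are then parametrized by the Grassmannian of subspaces of $V_1$; over each semi-algebraic stratum $Y_m$ of constant subgroup dimension the sub-Riemannian exponential maps assemble into an analytic map $Exp_m=\End\circ\widetilde{Exp}_m:\tau_m\to G$. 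A point of $\Abn^{nor}_{G'}(e)$ is a value of $Exp_m$ at which $\dd\End$ is not onto, hence a singular value of the finite-dimensional analytic map $Exp_m$, and classical Sard produces a measure-zero sub-analytic set containing it. Combined with Lemma~\ref{lemma:abn:nor} for the normal-abnormal part, this gives the theorem. The ingredient missing from your sketch is precisely this passage through the \emph{normal} exponential maps of the subgroups and the ensuing finite-dimensional Sard argument.
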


The paper is organized as follows.
Section 2 is a preliminary section.  First we recall the definition of the endpoint map and we give a  characterization of the image of its differential in Proposition~\ref{primo-lemma}, in the case of polarized groups.
Secondly, we review Carnot groups, abnormal curves, and give  interpretations of the abnormal equations using left-invariant forms and right-invariant forms.
In Section~\ref{sec:abn:var}, we examine the notion of abnormal varieties.
In Section~\ref{subsection:normal} we review normal curves, and in Section~\ref{subsection:Goh} 
we review the
Goh condition.
In Section~\ref{sec:step:two} we consider step-$2$ Carnot groups. We first  prove the Algebraic  Sard Property for general Carnot groups of step $2$  and then we prove  Theorem \ref{thm:free_step_two} for free step-2 groups. For the latter,  we also give 
 precise characterizations of the abnormal set.
In Section~\ref{sec:sufficient} we discuss 
sufficient conditions  for  Sard's property to hold. In particular, we discuss the role of contact vector fields and equation
\eqref{condition}.
The most important criteria are
Proposition~\ref{criterion_1} and
Corollary~\ref{criterium}, which will be used in Section~\ref{sec:applications} to prove the remaining part of  Theorem~\ref{main_thm1}.
In Section~\ref{sec:semi} we discuss Sard Property for a large class of semidirect  products of polarized groups.
In particular, we provide examples of  groups with exponential growth having the Analytic Sard Property (semisimple Lie groups) and the Algebraic Sard Property (solvable Lie groups). See Proposition~\ref{prop:semidirect} 
and Remark~\ref{rem:semidirect}.
Section~\ref{sec:step:three}
is devoted to 
Carnot groups of step 3.
First we prove 
Sard Property for abnormal length minimizers, i.e., Theorem~\ref{thm:step_three}. Second, we investigate  the example of  
the free $3$-step rank-$3$ Carnot group, showing that the  argument used  in step-$2$ Carnot groups finds an obstruction:
there are abnormal curves  not contained  in any proper subgroup.  
We conclude the article with Section~\ref{sec:open}, where we discuss the open problems.

\noindent{\it Acknowledgments}
Most of the work in this paper was developed while the authors were guests of the program {\it Geometry, Analysis and Dynamics on Sub-Riemannian Manifolds} at the Institut Henri Poincar\'e in the Fall 2014. 
The authors are very grateful to the program organizers 
A.~Agrachev,
D.~Barilari, 
U.~Boscain,
Y.~Chitour,
F.~Jean,
L.~Rifford, and
M.~Sigalotti,
  as well to IHP for its support. 


\section{Preliminaries}   	 
 
Let $G$ be a connected Lie group with Lie algebra $\g$, viewed as the tangent space of $G$ at the identity  element $e$. 
For all $g\in G$, denote by $L_g$ and $R_g$ the left and right multiplication by $g$, respectively.
Also, $\Ad_g:= {\rm d}(L_g\circ R_{g^{-1}})_e  .$


Fix a linear subspace $V\subseteq\g$.
 Let $u$ be an element  of $L^2([0,1],V)$. 
 Denote by $\gamma_u$ the curve  in $G$ that solves the ODE:
\begin{equation}\label{ODE}
\frac{\dd\gamma}{\dd t}(t)=\left(\dd L_{\gamma(t)}\right)_e  u(t), 
\end{equation}
with initial condition $\gamma(0)=e$. 
Viceversa, if 
$\gamma:[0,1]\to G$ is an absolutely continuous curve 
that solves   \eqref{ODE} for some 
$u\in L^2([0,1],V)$,
then we say that $\gamma$ is
 {\em horizontal} with respect to $V$  and that $u=u_\gamma$ is its {\em control}. In other words, the derivatives of $\gamma$ lie in the
  left-invariant subbundle,  denoted by $\Delta$, that  coincides with $V$ at $e$.

 The {\em endpoint map} starting at $e$ with controls in $V$ is the map 
\begin{eqnarray*}
\End: L^2([0,1],V)& \rightarrow &G\\
u\qquad&\mapsto& \gamma_u(1).
 \end{eqnarray*}

\subsection{Differential of the endpoint map}   	\label{EndpointMap}			
 

 The following result is standard and a proof of it can be found (in the more general context of Carnot-Carath\'eodory manifolds) in      \cite[Proposition~5.2.5, see also Appendix~E]{Montgomery}.

\begin{theorem}[Differential of End]\label{endpoint} The endpoint map $\End$ is a smooth map between the Hilbert space $L^2([0,1],V)$ and $G$.
If $\gamma$ is a  horizontal curve 
leaving from the origin with control $u$, then
the differential of  $\End$ at $u$, 
  which is a map from $L^2([0,1],V)$ to the tangent space of $G$ at $\gamma(1)$,
  is given by
 \begin{eqnarray*}
 \dd\End_u v=( \dd  R_{\gamma(1)} )_e \int_0^1 \Ad_{\gamma(t)} v(t )\dd t  ,\qquad \forall v\in L^2([0,1],V) .
  \end{eqnarray*}
\end{theorem}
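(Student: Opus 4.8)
The plan is to establish the two assertions separately: first that $\End$ is a smooth map from the Hilbert space $L^2([0,1],V)$ to $G$, and then that its differential is the stated integral operator. For smoothness, I would note that the right-hand side of the defining equation \eqref{ODE}, namely $F(g,a):=(\dd L_g)_e a$, is smooth in $g\in G$ and \emph{linear} (hence smooth) in the control value $a\in V$; the standard theory of ordinary differential equations with $L^2$ (Carath\'eodory) dependence on parameters, together with Gronwall estimates that are uniform on bounded subsets of $L^2([0,1],V)$, then gives that $u\mapsto\gamma_u$ is smooth from $L^2([0,1],V)$ into the space of continuous paths, and composing with evaluation at $t=1$ yields smoothness of $\End$. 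Since this is exactly the content of \cite[Proposition~5.2.5, see also Appendix~E]{Montgomery}, I would cite it and concentrate on the formula.

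For the differential, fix $u,v\in L^2([0,1],V)$, put $\gamma:=\gamma_u$, and for small $\eps$ let $\gamma^\eps:=\gamma_{u+\eps v}$. By the smoothness just recalled, $J(t):=\left.\frac{\partial}{\partial\eps}\right|_{\eps=0}\gamma^\eps(t)$ is a well-defined vector field along $\gamma$ with $J(0)=0$, and $\dd\End_u v=J(1)$. Since the asserted identity is local in $G$, I may carry out the computation in a linear model near $e$ (for instance by realizing a neighbourhood of $e$ inside some $GL_N(\R)$, which is legitimate as the identity to be proved is local), in which \eqref{ODE} reads $\dot\gamma^\eps=\gamma^\eps(u+\eps v)$, $(\dd L_g)_e a=ga$, and $\Ad_g a=gag^{-1}$. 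Differentiating in $\eps$ at $\eps=0$ and exchanging $\partial_t$ with $\partial_\eps$ (legitimate once smoothness is in hand) produces the linear inhomogeneous equation
\begin{equation*}
\dot J=J\,u+\gamma\,v,\qquad J(0)=0.
\end{equation*}
The key step is to pass to the right-trivialized variation $b(t):=(\dd R_{\gamma(t)^{-1}})_{\gamma(t)}J(t)\in\g$, i.e.\ $b=J\gamma^{-1}$ in the linear model: using $\frac{\dd}{\dd t}\gamma^{-1}=-\gamma^{-1}\dot\gamma\,\gamma^{-1}$, the $J\,u$ terms cancel and one is left with
\begin{equation*}
\dot b(t)=\gamma(t)\,v(t)\,\gamma(t)^{-1}=\Ad_{\gamma(t)}v(t),\qquad b(0)=0,
\end{equation*}
whence $b(1)=\int_0^1\Ad_{\gamma(t)}v(t)\,\dd t$ and, translating back, $\dd\End_u v=J(1)=(\dd R_{\gamma(1)})_e\,b(1)$, which is exactly the claimed formula. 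As a consistency check one may instead left-trivialize, $c:=\gamma^{-1}J$, which satisfies $\dot c=-\ad_{u(t)}c+v(t)$, $c(0)=0$; solving this linear equation gives $J(1)=(\dd L_{\gamma(1)})_e\int_0^1\Ad_{\gamma(1)^{-1}\gamma(t)}v(t)\,\dd t$, and one verifies directly that this agrees with the previous expression.

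I expect the main obstacle to be not the Duhamel-type computation above but the functional-analytic smoothness claim --- that $\End$ is a \emph{smooth} map from the Hilbert space into $G$ --- since the controls lie only in $L^2$, so that \eqref{ODE} must be read in the Carath\'eodory sense and all the continuity and differentiability estimates have to be made uniform in $u$ over bounded sets; this is exactly the point where I would defer to the cited literature rather than reprove it. A secondary, purely bookkeeping issue is to make the matrix-model computation invariant: as both sides of the identity are already known to be smooth tensorial objects, it suffices to verify the equality in any convenient local model, which is what the computation above does.
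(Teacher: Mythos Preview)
Your proposal is correct and follows essentially the same route as the paper: both reduce to a matrix subgroup, derive the variation equation $\dot J=Ju+\gamma v$ (the paper writes $\sigma$ for your $J$), and identify the solution as $\big(\int_0^t\Ad_{\gamma(s)}v(s)\,\dd s\big)\gamma(t)$. The only cosmetic difference is that you obtain this by introducing the right-trivialized variable $b=J\gamma^{-1}$ and integrating $\dot b=\Ad_\gamma v$, whereas the paper simply writes down the candidate and checks it satisfies the ODE.
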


\begin{proof}[Sketch of the proof]  
The proof of a more general result can be found in \cite{Montgomery}.
We sketch here the simple proof of the formula in the case when $G \subset GL_n(\R)$, 
 where we can interpret the Lie product as a matrix product and work in the matrix coordinates. 
 Let  $\gamma_{u+\eps v}$ be the curve with the control $u+\eps v$ and $\sigma(t)$ be the derivative of $\gamma_{u+\eps v}(t)$ 
 with respect to $\eps$ at $\eps=0$. Then $\sigma$ satisfies the following ODE (which is the derivation with respect to $\eps$ of   $(\ref{ODE})$ for $\gamma_{u+\eps v}$)
 \begin{eqnarray*}
 \frac{\dd\sigma}{\dd t} = \gamma(t) \cdot v(t) + \sigma \cdot u(t).
  \end{eqnarray*}
Now it is easy to see that $\int_0^t \Ad_{\gamma(s)} ( v(s)) \dd s \cdot \gamma(t)$ satisfies the above equation with the same initial condition as $\sigma$, hence is equal to $\sigma$.  \end{proof}

\begin{proposition}[Image of $\dd \End$]\label{primo-lemma}
If $\gamma:[0,1]\to G$ is a  horizontal curve 
leaving from the origin 
with control $u $,
then 
 \begin{eqnarray}\label{for:Image:of:dEnd}	
 {\rm Im }( \dd\End_u  ) =( \dd  R_{\gamma(1)} )_e (\span\{  \Ad_{\gamma(t)}V\;:\; t \in [0,1] \}). \end{eqnarray}
\end{proposition}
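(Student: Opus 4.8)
The statement to prove is that for a horizontal curve $\gamma$ with control $u$,
\[
{\rm Im}(\dd\End_u) = (\dd R_{\gamma(1)})_e\bigl(\span\{\Ad_{\gamma(t)}V : t\in[0,1]\}\bigr).
\]
The plan is to deduce this directly from the formula for $\dd\End_u$ given in Theorem~\ref{endpoint}, namely $\dd\End_u v = (\dd R_{\gamma(1)})_e \int_0^1 \Ad_{\gamma(t)} v(t)\,\dd t$. Since $(\dd R_{\gamma(1)})_e$ is a linear isomorphism from $\g$ onto $T_{\gamma(1)}G$, it suffices to show that the set of vectors $\bigl\{\int_0^1 \Ad_{\gamma(t)} v(t)\,\dd t : v\in L^2([0,1],V)\bigr\}$ equals the linear span $W:=\span\{\Ad_{\gamma(t)}V : t\in[0,1]\}\subseteq\g$. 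Call this set of integrals $I$.

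\emph{The inclusion $I\subseteq W$.} For fixed $v\in L^2([0,1],V)$, the integrand $t\mapsto \Ad_{\gamma(t)}v(t)$ takes values in $W$ for almost every $t$, because $v(t)\in V$ and hence $\Ad_{\gamma(t)}v(t)\in \Ad_{\gamma(t)}V\subseteq W$. Since $W$ is a finite-dimensional (hence closed) linear subspace of $\g$ and the integral is a (Bochner/Lebesgue) limit of Riemann-type sums lying in $W$, the integral $\int_0^1 \Ad_{\gamma(t)}v(t)\,\dd t$ lies in $W$. This gives $I\subseteq W$; moreover $I$ is itself a linear subspace of $\g$ by linearity of the integral in $v$.

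\emph{The inclusion $W\subseteq I$.} It is enough to show that every generator $\Ad_{\gamma(t_0)}X$ with $t_0\in[0,1]$ and $X\in V$ lies in $I$. Consider controls of the form $v=\tfrac{1}{|J|}\mathbf 1_J\, X$ where $J\subseteq[0,1]$ is a small interval around $t_0$; then $\int_0^1 \Ad_{\gamma(t)}v(t)\,\dd t = \tfrac{1}{|J|}\int_J \Ad_{\gamma(t)}X\,\dd t$, which converges to $\Ad_{\gamma(t_0)}X$ as $|J|\to 0$ at every Lebesgue point of the continuous map $t\mapsto\Ad_{\gamma(t)}X$ — that is, at every $t_0$, since $\gamma$ is continuous and $\Ad$ is smooth. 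Because $I$ is a linear subspace of the finite-dimensional space $\g$, it is closed, so the limit $\Ad_{\gamma(t_0)}X$ belongs to $I$. Taking spans yields $W\subseteq I$, hence $W=I$, and applying $(\dd R_{\gamma(1)})_e$ finishes the proof.

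\emph{Main obstacle.} The only genuine subtlety is the measure-theoretic justification of the $W\subseteq I$ direction: one must argue that approximate-identity averages of $t\mapsto\Ad_{\gamma(t)}X$ converge to the pointwise value, which is immediate here since that map is continuous, and that $I$ is closed, which follows from finite-dimensionality of $\g$. Everything else is a formal consequence of the explicit differential formula and the fact that right translation is a diffeomorphism.
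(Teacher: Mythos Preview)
Your proof is correct and follows essentially the same route as the paper's: reduce via the isomorphism $(\dd R_{\gamma(1)})_e$ to the equality of the integral-image set with the span, then argue each inclusion using that both sides are closed (finite-dimensional) subspaces and an approximate-identity argument for $W\subseteq I$. The only cosmetic difference is that the paper uses a generic delta-sequence $\psi_n$ while you use normalized indicator functions $\tfrac{1}{|J|}\mathbf 1_J$; your justification that $I$ is closed (as a linear subspace of the finite-dimensional $\g$) is in fact slightly more explicit than the paper's.
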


\proof
A glance at  the formula of Theorem~\ref{endpoint} combined with the fact that 
 $(\dd  R_{\gamma(1)} )_e$ is a linear isomorphism from $\g$ to $T_{\gamma(1)}G$ shows that it suffices to prove that  
 \begin{eqnarray*} \left\{ \int_0^1 \Ad_{\gamma(t)} v(t )\dd t :  v\in L^2([0,1],V) \right \} =  \span\{  \Ad_{\gamma(t)}V\;:\; t \in [0,1] \}. \end{eqnarray*}

$\subset:$ Any linear combination of terms $\Ad_{\gamma(t_i)} v_i$ is in the right hand set. Now an integral is a limit of finite sums 
and the right hand side is closed. Hence the right hand side contains the left hand side.

$\supset:$ It suffices to show that any element of the
form   $\xi = \Ad_{\gamma(t_1)} v_1$  lies in the left hand side.  Let $\psi_n (t)$ be a delta-function family centered at $t_1$,
that is, a  smooth family of continuous functions for which the limit as a distribution as $n \to \infty$
of $\psi_n (t)$ is $\delta(t -t_1)$.  Then $\lim_{n\to \infty} \int_0^1 \Ad_{\gamma(t)} \psi_n (t) v_1 \dd t =  \Ad_{\gamma(t_1)} v_1 = \xi$
and since the left hand side is a closed subspace,  $\xi$ lies in the set in the left hand side. 
 \qed

\begin{remark}
Evaluating \eqref{for:Image:of:dEnd} at $t=0$ and $t=1$ yields
\begin{equation}\label{R_V_L_V}
(\dd R_{\gamma(1)})_eV+(\dd L_{\gamma(1)})_e V 
\subset {\rm Im}(\dd \End_u).
\end{equation}
	
\end{remark}

\begin{remark}
Proposition~\ref{primo-lemma} implies immediately that for strongly bracket generating distributions, the endpoint map is a submersion at every $u\neq 0$.
We recall that a polarized group $(G,V)$ is  {\em strongly bracket generating}  if for every $X\in V\setminus\{0\}$, one has $V+[X,V]=\g$.
\end{remark}

\begin{remark}[Goh's condition is automatic in rank $2$]\label{rem:Gohrank2}
Assume that $\dim V = 2$.
We claim that if $\gamma$ is horizontal leaving from the origin with control $u$,
then 
for all $ t \in [0,1]$ we have
 \begin{equation}\label{eq:Gohrank2}
   ( \dd  R_{\gamma(1)} )_e  \Ad_{\gamma(t)} [V,V]
\subseteq
{\rm Im }( \dd\End_u  ) . 
\end{equation}
Indeed, we may assume that $\gamma$ is parametrized by arc length and that $t$ is a point of differentiability.
Hence,
$\gamma(t)^{-1}\gamma(t+\eps)= 
\exp(u(t) \eps + o(\eps))$.
Notice that since $u(t)\in V\setminus \{0\}$ and  $\dim V=2$, it follows that
$[u(t), V]=[V,V]$.
Therefore 
$\Ad_{\gamma(t)}^{-1}\Ad_{\gamma(t+\eps)}V 
=
e^{   \ad_{u(t) \eps + o(\eps)}}V.
$
Hence, for all $Y\in V$ 
 \begin{eqnarray*}
\eps[u(t), Y] + o(\eps)\in
V+\Ad_{\gamma(t)}^{-1}\Ad_{\gamma(t+\eps)}V. 
 \end{eqnarray*}
Therefore,   Proposition~\ref{primo-lemma} implies that
$
  \Ad_{\gamma(t)}
  [u(t), Y]\in 
( \dd  R_{\gamma(1)} )_e^{-1}
{\rm Im }( \dd\End_u  )$, which proves the claim.

 By \eqref{eq:toGoh} below, formula \eqref{eq:Gohrank2} implies that, whenever $\gamma$ is an abnormal curve (see Section~\ref{sec:abnormal}) in a polarized group $(G,V)$ of rank 2, then $\gamma$ satisfies the {\em Goh condition} (see Section~\ref{subsection:Goh}).

\end{remark}

\begin{remark}[Action of contact maps]
We associate to the subspace $V\subseteq \g$  a left-invariant subbundle $\Delta$ of $TG$ such that 
$\Delta_e=V$. A vector field $\xi\in {\rm Vec}(G)$ is said to be {\em contact} if  its flow
$\Phi^s_\xi$  preserves $\Delta$.
 Denote by 
 \begin{eqnarray*}
\mathcal S := \{\xi\in {\rm Vec}(G) \mid \xi \text{ contact}, \xi_e=0\} \end{eqnarray*}
  the space of global contact vector fields on $G$ that vanish at the identity.
We claim that, for every horizontal curve $\gamma$ leaving from the origin,
\begin{equation}\label{S1}
\mathcal S(\gamma(1))\subset {\rm Im}(\dd \End_u).
\end{equation}
Indeed, let $\xi\in \mathcal S$ and let $\phi_\xi^s$ be the corresponding flow at time $s$. Since $\xi_e=0$, we have that $\phi_\xi^s(e)=e$.
 Consider the curve $\gamma^s:=\phi_\xi^s\circ \gamma$.
 Notice that 
 $\gamma^s(e)=e$
 and that
  $\gamma^s$ is horizontal,
 because $\xi$ is a contact vector field.
Therefore, 
 \begin{eqnarray*}\End(u^s)=\gamma^s(1)= \Phi^s_\xi(\gamma(1)), \end{eqnarray*}
where $u^s$ is the control of
  $\gamma^s$.
Differentiating at $s=0$, we conclude that
$\xi(\gamma(1))$, which is an arbitrary point in 
$\mathcal S(\gamma(1))$, belongs to $ {\rm Im}(\dd \End_u).$
\end{remark}


 \subsection{Carnot groups}   	\label{CarnotGroups}	 
Among the polarized groups, Carnot groups are the most distinguished.
A {\it Carnot group}  is a simply connected,    polarized Lie group $(G,V)$ whose
Lie algebra $\g$   admits 
a direct sum   decomposition  
  in     nontrivial  vector subspaces
 \begin{eqnarray*}\g = V_1 \oplus V_2 \oplus \ldots \oplus  V_s\quad \text{ such that} \qquad [V_i, V_j ] = V_{i+j} \end{eqnarray*}
where $ V_k =\{ 0\}, k > s$
and $V_1=V$.
 We refer to the $i$th summand $V_i$ as the $i$th {\it layer}.

The above decomposition   is also called the {\it stratification} of $\g$ and Carnot groups are often referred
to in the analysis literature as  {\it stratified} groups. 
  The {\it step} of a Carnot group is the total number $s$ of layers and  equals
  the degree of nilpotency of $\g$:  all Lie brackets of length greater than $s$ vanish.  
  Every Carnot group admits at least a canonical outer automorphism,   the `scaling' $\delta_\lambda$ which on $\g$ is equal to the multiplication by  $\lambda^i$ on the $i$th layer.
  
Since $G$ is  simply connected  and nilpotent, the exponential map $\exp: \g \to G$ is a diffeomorphism.
We write $\log$ for the inverse of $\exp$. 
When we  use $\log$  to  identify $\g$ with $G$ the group law  on $G$   becomes
a polynomial map $\g \times \g \to \g$ with $0 \in \g$ playing the role of the identity element $e \in G$.


\subsection{Abnormal curves}   \label{sec:abnormal}

\begin{definition}[Abnormal curve]
Let $(G, V)$ be a polarized group.
Let $\gamma:[0,1]\to G$ be a horizontal curve leaving from the origin with control $u$.
If  
$ {\rm Im}(\dd \End_u)\subsetneq T_{\gamma(1)} G$,
we say that $\gamma$ is {\em abnormal}.
\end{definition}

In other words,
$\gamma$ is   abnormal 
if and only if
$ \gamma(1)$ is a critical value of 
$\End$.
We define the {\em abnormal set} of $(G,V)$ as
\begin{equation}\label{abnormal:set}
\Abn(e):=\{ \gamma(1)\mid \gamma \text{ abnormal }, \gamma(0)=e\}
=\{\text{critical values of } \End\}.
\end{equation}
The Sard Problem in sub-Riemannian geometry is the study of the above abnormal set. More information can be found in
  \cite[page 182]{Montgomery}.

\subsubsection{Interpretation of abnormal equations via right-invariant forms}   				
Proposition~\ref{primo-lemma} gives an  interpretation for a curve to be abnormal,
which, to the best of our knowledge, is not in the literature.
			 
\begin{corollary}
\label{right_invariant_interpretation}
Let $(G,V)$ be a polarized group and let $\gamma : [0,1]\to G$ be a horizontal curve.
Then the following are equivalent:
\begin{enumerate}
 \item $\gamma$ is abnormal;
  \item there exists $\lambda \in \g^*\setminus{\{0\}}$ such that  
$  \lambda(\Ad_{\gamma(t)}V )=\{0\}$ for every $t\in [0,1]$;
\item there exists a right-invariant 1-form $\alpha$ on $G$ such that
$  \alpha( \Delta_{\gamma(t)} )=\{0\}$ for every $t\in [0,1]$,
where $\Delta$ is the left-invariant distribution induced by $V$.
\end{enumerate}
\end{corollary}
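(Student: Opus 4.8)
The plan is to derive the equivalences from Proposition~\ref{primo-lemma} by a direct unwinding of the definitions, passing through the notion of an annihilating covector.

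\textbf{Step 1: (1) $\Leftrightarrow$ (2).} By definition, $\gamma$ is abnormal if and only if ${\rm Im}(\dd\End_u)\subsetneq T_{\gamma(1)}G$. Since $(\dd R_{\gamma(1)})_e$ is a linear isomorphism $\g\to T_{\gamma(1)}G$, Proposition~\ref{primo-lemma} shows this is equivalent to
\begin{equation*}
\span\{\Ad_{\gamma(t)}V : t\in[0,1]\}\subsetneq \g.
\end{equation*}
A proper subspace of $\g$ is precisely a subspace contained in the kernel of some nonzero $\lambda\in\g^*$; and $\lambda$ annihilates the span if and only if $\lambda(\Ad_{\gamma(t)}V)=\{0\}$ for every $t\in[0,1]$. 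This gives (1) $\Leftrightarrow$ (2).

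\textbf{Step 2: (2) $\Leftrightarrow$ (3).} The idea is that a right-invariant $1$-form is exactly the datum of a single covector at $e$ transported by right translations: given $\lambda\in\g^*$, define $\alpha$ by $\alpha_g := \lambda\circ (\dd R_{g^{-1}})_g$ (equivalently $\alpha_g := (R_{g^{-1}})^*\lambda$), and conversely $\alpha\mapsto \alpha_e$. This is a bijection between $\g^*\setminus\{0\}$ and nonzero right-invariant $1$-forms. Under this correspondence I must check that the condition $\alpha(\Delta_{\gamma(t)})=\{0\}$ matches $\lambda(\Ad_{\gamma(t)}V)=\{0\}$. Since $\Delta$ is the left-invariant distribution with $\Delta_e=V$, we have $\Delta_g=(\dd L_g)_e V$, so
\begin{equation*}
\alpha_g(\Delta_g)=\lambda\big((\dd R_{g^{-1}})_g (\dd L_g)_e V\big)=\lambda\big((\dd(R_{g^{-1}}\circ L_g))_e V\big)=\lambda(\Ad_g V),
\end{equation*}
using the definition $\Ad_g=\dd(L_g\circ R_{g^{-1}})_e$ together with the fact that $L_g$ and $R_{g^{-1}}$ commute. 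Evaluating at $g=\gamma(t)$ yields the equivalence, and nonvanishing of $\alpha$ corresponds to nonvanishing of $\lambda$.

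\textbf{Main obstacle.} There is no serious obstacle; the proof is a formal manipulation once Proposition~\ref{primo-lemma} is in hand. The only point requiring a little care is the bookkeeping in Step 2 — making sure the left/right translations land on the correct side and that $L_g$ commutes with $R_{g^{-1}}$ so that $\dd(L_g\circ R_{g^{-1}})_e = \dd(R_{g^{-1}}\circ L_g)_e = \Ad_g$ — and the observation that ``proper subspace'' is the same as ``annihilated by some nonzero covector,'' which is where the hypothesis $\lambda\neq 0$ (resp.\ $\alpha\neq 0$) enters.
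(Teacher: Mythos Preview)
Your proof is correct and follows the same approach as the paper: the paper likewise invokes Proposition~\ref{primo-lemma} to say that abnormality is equivalent to $\Ad_{\gamma(t)}V$ lying in a proper subspace of $\g$ for all $t$, and declares (2)$\Leftrightarrow$(3) obvious. You have simply written out in full the bookkeeping (the explicit correspondence $\lambda\leftrightarrow\alpha$ via right translation and the identity $\alpha_g(\Delta_g)=\lambda(\Ad_g V)$) that the paper leaves implicit.
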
			
\begin{proof}
$(2)$ and $(3)$ are obviously equivalent.
By Proposition~\ref{primo-lemma}, $\gamma$ is abnormal if and only if there is a proper subspace of $\g$
that contains $ \Ad_{\gamma(t)}V$ for all $t$.
\end{proof}

\subsubsection{Interpretation of abnormal equations via left-invariant adjoint equations}

The previous section  characterized   singular curves for a left-invariant distribution on a Lie group $G$  in terms of right-invariant one-forms.
This section characterizes the same   curves in terms of  left-invariant one-forms.
 This left-invariant characterization   is the one used in  \cite[Equations (12), (13) and (14)]{Montgomery_Singular_extremals_on_Lie_groups}    and  \cite[equations in Section 2.3]{Gole_Karidi}.
    We establish  the equivalence of the   two characterizations  directly using  Lie theory.  
Then we take a second, Hamiltonian, perspective on the  equivalence of characterizations.    In this perspective,  
the right-invariant characterization is simply  
the momentum map applied to  the Hamiltonian provided  by  
the Maximum Principle.

We shall also introduce the notation  
 \begin{equation}
 \label{curv_map}
 w(\eta) (X,Y):=\eta([X,Y]), \text{ for }\eta \in V^{\perp} \subset \g^*, X,Y\in V.
 \end{equation}
 
\begin{proposition}
\label{left_invariant_interpretation}
Let $(G,V)$ be a polarized group and let $\gamma : [0,1]\to G$ be a horizontal curve with control $u$.
Then the following are equivalent:
\begin{enumerate}
 \item $\gamma$ is abnormal; 
  \item there exists a curve $\eta: [0,1] \to  \g^*$, with  $\eta(t) |_V =0$ and  $\eta(t) \ne 0$, for all $t\in [0,1]$,
  representing a curve of left-invariant one-forms, 
   such that  
\begin{equation*}
\label{invariant_left_eqns}	
\left\{\begin{array}{l}
\frac{\dd \eta }{\dd t}(t) = (\ad_{u(t)}) ^* \eta(t)\\
u(t)\in {\rm Ker} (w(\eta(t))).
\end{array}\right.
\end{equation*}
\end{enumerate}
\end{proposition}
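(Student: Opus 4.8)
The plan is to deduce Proposition~\ref{left_invariant_interpretation} from Corollary~\ref{right_invariant_interpretation} by translating the right-invariant covector $\lambda \in \g^*$ (which annihilates $\Ad_{\gamma(t)}V$ for all $t$) into a moving left-invariant covector $\eta(t)$ along $\gamma$. The natural change of variables is $\eta(t) := \Ad_{\gamma(t)}^* \lambda$, i.e.\ $\eta(t)(X) = \lambda(\Ad_{\gamma(t)}X)$ for $X \in \g$. First I would check the two algebraic conditions: since $\lambda(\Ad_{\gamma(t)}V) = \{0\}$, we get $\eta(t)|_V = 0$ for every $t$; and since $\lambda \ne 0$ and $\Ad_{\gamma(t)}$ is invertible, $\eta(t) \ne 0$ for every $t$. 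So $\eta$ is a well-defined nonvanishing curve in $V^\perp$.

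Next I would derive the adjoint ODE. Differentiating $\eta(t) = \Ad_{\gamma(t)}^*\lambda$ in $t$ and using $\frac{\dd}{\dd t}\gamma(t) = (\dd L_{\gamma(t)})_e u(t)$, the standard identity $\frac{\dd}{\dd t}\Ad_{\gamma(t)} = \Ad_{\gamma(t)} \circ \ad_{u(t)}$ holds (this is the left-invariant version; when $G \subset GL_n$ it is just $\frac{\dd}{\dd t}(\gamma X \gamma^{-1})$ computed with $\dot\gamma = \gamma u$). Dualizing gives $\frac{\dd \eta}{\dd t}(t) = (\ad_{u(t)})^* \Ad_{\gamma(t)}^* \lambda = (\ad_{u(t)})^* \eta(t)$, which is exactly the first equation in (2). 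Conversely, given such an $\eta$ one recovers $\lambda := \eta(0)$ and the same ODE shows $\eta(t) = \Ad_{\gamma(t)}^*\eta(0)$ by uniqueness of solutions, so $\eta(0)(\Ad_{\gamma(t)}V) = \eta(t)(V) = \{0\}$, recovering condition (2) of Corollary~\ref{right_invariant_interpretation}.

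Finally I would address the Goh-type condition $u(t) \in \ker(w(\eta(t)))$. The point is that for $X, Y \in V$ one has $\eta(t)([X,Y]) = w(\eta(t))(X,Y)$, and I want to show this vanishes whenever $X = u(t)$. The cleanest route is to differentiate the identity $\eta(t)(\Ad_{\gamma(t)}Y) = 0$, valid for all fixed $Y \in V$, rather than just use it pointwise: differentiating in $t$ and using the adjoint equation together with $\eta(t)|_V = 0$ produces, at points of differentiability of $u$, the relation $\eta(t)([u(t),Y]) = 0$ for all $Y \in V$, i.e.\ $w(\eta(t))(u(t), \cdot) = 0$, which is precisely $u(t) \in \ker(w(\eta(t)))$. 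One must be slightly careful since $u \in L^2$ is only defined a.e., so I would phrase this as holding for a.e.\ $t$, or equivalently note that $\frac{\dd}{\dd t}[\eta(t)(\Ad_{\gamma(t)}Y)]$ exists a.e.\ and equals $\eta(t)([u(t),Y])$ by the product rule applied to the absolutely continuous functions involved. Conversely, if $\eta$ satisfies the system in (2), reversing this computation shows $\frac{\dd}{\dd t}[\eta(t)(\Ad_{\gamma(t)}Y)] = 0$ for all $Y \in V$, and since $\eta(0)(\Ad_{\gamma(0)}Y) = \eta(0)(Y) = 0$, we get $\eta(0)(\Ad_{\gamma(t)}V) = 0$ for all $t$, so $\lambda = \eta(0)$ certifies abnormality via Corollary~\ref{right_invariant_interpretation}.

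I expect the main obstacle to be the measure-theoretic bookkeeping in the last step: $\eta$ is absolutely continuous, $u$ is merely $L^2$, and one wants the product rule $\frac{\dd}{\dd t}[\eta(t)(\Ad_{\gamma(t)}Y)] = \dot\eta(t)(\Ad_{\gamma(t)}Y) + \eta(t)(\Ad_{\gamma(t)}\ad_{u(t)}Y)$ to hold a.e., together with the cancellation coming from $\eta(t)|_V = 0$. This is routine but should be stated carefully. Everything else is a direct consequence of Corollary~\ref{right_invariant_interpretation} and the functoriality of $\Ad$ and $\ad$; I would also remark that the second, Hamiltonian, perspective promised in the text reinterprets $\eta(t) = \Ad_{\gamma(t)}^*\lambda$ as the time-$t$ value of the momentum of the constant covector $\lambda$ under the (right-invariant) Hamiltonian flow, but that can be relegated to a remark following the proof.
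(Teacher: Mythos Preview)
Your change of variables $\eta(t)=\Ad_{\gamma(t)}^*\lambda$ is exactly the content of the paper's Lemma preceding Section~\ref{Ham_formalism}, so the core of your approach coincides with the paper's Lie-theoretic argument. Where you go slightly further is in deriving the kernel condition $u(t)\in\ker(w(\eta(t)))$ by direct differentiation, whereas the paper instead obtains the full system (both equations) by invoking the Maximum Principle in its Hamiltonian formulation (Section~\ref{Ham_formalism} and the proof of Proposition~\ref{Gole_Karidi_form}). Your route is a bit more elementary and self-contained; the paper's route has the advantage of explaining the origin of the equations as Hamilton's equations on $T^*G$.

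There is, however, a recurring slip in your derivation of the kernel condition. The identity you possess is $\eta(t)(Y)=0$ for $Y\in V$ (equivalently $\lambda(\Ad_{\gamma(t)}Y)=0$), \emph{not} $\eta(t)(\Ad_{\gamma(t)}Y)=0$; the latter equals $\lambda(\Ad_{\gamma(t)^2}Y)$ and has no reason to vanish. Differentiating the correct identity $\eta(t)(Y)=0$ with $Y\in V$ fixed gives $\dot\eta(t)(Y)=((\ad_{u(t)})^*\eta(t))(Y)=\eta(t)([u(t),Y])=0$, which is precisely $u(t)\in\ker(w(\eta(t)))$ a.e. The same correction applies to your converse paragraph: the constancy you want is that of $t\mapsto \eta(0)(\Ad_{\gamma(t)}Y)=\eta(t)(Y)$, which you had already handled correctly a few lines earlier via uniqueness for the linear ODE; the later product-rule computation with $\eta(t)(\Ad_{\gamma(t)}Y)$ is both unnecessary and incorrect as written. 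With this fix your argument goes through, and the measure-theoretic caveat (the identity holds for a.e.\ $t$ since $\eta$ is absolutely continuous and $u\in L^2$) is exactly right.
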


\begin{remark} There is a  sign difference between the first equation of (2) above,  namely $\frac{\dd \eta }{\dd t}(t) = (\ad_{u(t)}) ^* \eta(t)$,
 and the analogous equation in \cite[Sec.~4]{Montgomery_Singular_extremals_on_Lie_groups} 
that reads 
$ \frac{\dd \eta }{\dd t}(t) = -\ad_{u(t)} ^* \eta(t)$.  
The equations  coincide if we set $\ad_u ^* = -(\ad_u)^*$.  To understand this minus sign,
we first observe that 
in the equation above    $(\ad_u)^* $ is the operator $(\ad_u)^*: \g^* \to \g^*$  dual to the adjoint operator, so that
 \begin{eqnarray*}
 ((\ad_u)^* \lambda )(X) = \lambda (\ad_u (X)) = \lambda([u,X]). \end{eqnarray*}
In the equation of \cite[Sec.~4]{Montgomery_Singular_extremals_on_Lie_groups}
the operator $\ad_u ^*$ is the differential of the co-adjoint action $\Ad^*: G \to gl(\g^*)$
taken at $g = e$ in the direction $u \in \g$. The minus sign arises out of the inverse needed to make the action a left action:  
 $\Ad^* (g) = (\Ad_{g^{-1}})^*$.
\end{remark}

Gol\'e and Karidi   made good use of the coordinate version of the previous proposition.   
See \cite[page 540]{Gole_Karidi}, following \cite[Sec.~4]{Montgomery_Singular_extremals_on_Lie_groups}. See also \cite{LLMV, LLMV2}.
To describe their version, fix  a basis $X_1, \ldots, X_n$ of $\g$ 
such that $X_1, \ldots, X_r$ is a basis of $V$.
 Let $c_{ij}^k$ be the structure constant of $\g$ with respect to this basis, seen as {\it left}-invariant vector fields.  Let $(u_1, \ldots, u_r) \in V$
 be controls relative to this basis. Let $\eta_i = \eta (X_i)$ denote the linear coordinates of a covector $\eta \in \g^*$
 relative to this basis.  

\begin{proposition}
\label{Gole_Karidi_form}
Let $(G,V)$ be a polarized group.
Let $\gamma : [0,1]\to G$ be a horizontal curve with control $\sum_{i=1} ^r u_i (t) X_i$.
Under the above coordinate conventions,  the following are equivalent:
\begin{enumerate}
 \item $\gamma$ is abnormal;
 \item 
 there exists a vector function  $(0,0, \ldots, 0, \eta_{r+1}  
 , \ldots, \eta_n):[0,1] \to \R^n$, never vanishing,  
 such that
 \begin{equation*}	
\left\{\begin{array}{l}
\frac{\dd \eta_i }{\dd t}(t) + \sum_{j=1}^r \sum_{k=r+1}^n
c_{ij}^k  u_j(t) \eta_k(t)  =  0  ,\qquad \text{ for all } i=r+1,\ldots, n, \\
 \sum_{j=1}^r \sum_{k=r+1}^n
c_{ij}^k  u_j(t) \eta_k(t)  =  0  ,\qquad \text{ for all } i=1,\ldots, r.
\end{array}\right.
\end{equation*}
\end{enumerate}
\end{proposition}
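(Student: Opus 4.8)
The plan is to derive the coordinate system in statement (2) directly from the invariant version in Proposition~\ref{left_invariant_interpretation}, which is the immediately preceding result. So I would \emph{not} attempt to re-prove abnormality from scratch; instead I would show that, under the fixed basis $X_1,\dots,X_n$ with $X_1,\dots,X_r$ spanning $V$, the two equations in Proposition~\ref{left_invariant_interpretation}(2) translate verbatim into the two displayed equations here. The key observation is that the condition $\eta(t)|_V = 0$ says exactly $\eta_i(t) = 0$ for $i = 1,\dots,r$, so a curve $\eta: [0,1]\to\g^*$ with that property is faithfully encoded by the vector function $(0,\dots,0,\eta_{r+1},\dots,\eta_n)$; and $\eta(t)\neq 0$ for all $t$ becomes the "never vanishing" hypothesis.

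First I would unwind the ODE $\frac{\dd\eta}{\dd t}(t) = (\ad_{u(t)})^*\eta(t)$ in coordinates. Using the remark following Proposition~\ref{left_invariant_interpretation}, $((\ad_u)^*\lambda)(X) = \lambda([u,X])$. Evaluating on $X_i$ and writing $u(t) = \sum_{j=1}^r u_j(t) X_j$ gives
\begin{equation*}
\frac{\dd\eta_i}{\dd t}(t) = \eta(t)\Big(\Big[\sum_{j=1}^r u_j(t) X_j,\, X_i\Big]\Big) = \sum_{j=1}^r u_j(t)\, \eta(t)([X_j,X_i]) = \sum_{j=1}^r\sum_{k=1}^n u_j(t)\, c_{ji}^k\, \eta_k(t).
\end{equation*}
Since $\eta_k(t) = 0$ for $k \le r$, the inner sum over $k$ collapses to $k = r+1,\dots,n$, and using the antisymmetry $c_{ji}^k = -c_{ij}^k$ of the structure constants this is precisely $-\sum_{j=1}^r\sum_{k=r+1}^n c_{ij}^k u_j(t)\eta_k(t)$, i.e.\ the first displayed equation. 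One must also check consistency: the equation for $i = 1,\dots,r$ would read $\frac{\dd\eta_i}{\dd t}(t) = -\sum_{j=1}^r\sum_{k=r+1}^n c_{ij}^k u_j(t)\eta_k(t)$, but since $\eta_i \equiv 0$ there the left side vanishes, forcing the right side to vanish — and that is exactly the second displayed equation.

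It remains to identify the algebraic constraint $u(t)\in\ker(w(\eta(t)))$ with that second equation. By the definition \eqref{curv_map}, $w(\eta)(X,Y) = \eta([X,Y])$ for $X,Y\in V$, so $u(t)\in\ker w(\eta(t))$ means $\eta(t)([u(t), Y]) = 0$ for all $Y\in V$, equivalently $\eta(t)([u(t),X_i]) = 0$ for $i = 1,\dots,r$. Expanding as above, $\eta(t)([u(t),X_i]) = \sum_{j=1}^r\sum_{k=1}^n c_{ji}^k u_j(t)\eta_k(t) = -\sum_{j=1}^r\sum_{k=r+1}^n c_{ij}^k u_j(t)\eta_k(t)$, which vanishes iff the second displayed equation holds. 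This closes the equivalence: the passage from (1) to (2) in Proposition~\ref{left_invariant_interpretation} is the passage from (1) to (2) here, with $\eta$ written in coordinates. The only genuine bookkeeping hazard — hence the "main obstacle," though it is a minor one — is keeping track of the sign convention: whether one writes $c_{ij}^k$ or $c_{ji}^k$ and which equation of the pair one is in, since the paper's structure constants are those of the \emph{left}-invariant vector fields and the $\ad^*$ here carries the sign discussed in the remark. Once that is pinned down consistently, the proof is a direct transcription. $\qed$
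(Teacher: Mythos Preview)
Your transcription from the invariant formulation to coordinates is correct; the sign bookkeeping and the identification of the kernel condition with the second displayed equation are both right.

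However, the route differs from the paper's. The paper does \emph{not} deduce Proposition~\ref{Gole_Karidi_form} from Proposition~\ref{left_invariant_interpretation}; it goes the other way. In Section~\ref{Ham_formalism} the authors prove Proposition~\ref{Gole_Karidi_form} directly from the Maximum Principle: they introduce the momentum functions $P_i$ of \eqref{momentum_fns}, use the Poisson form $\dot f=\{f,H_u\}$ of Hamilton's equations together with the identity $\{P_i,P_j\}=-\sum_k c_{ij}^k P_k$, and read off the displayed system with $\eta_i=P_i$. They then remark that Proposition~\ref{Gole_Karidi_form} is the coordinate form of Proposition~\ref{left_invariant_interpretation}, so the latter is proven as a consequence. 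Your argument is more elementary (pure linear algebra, no symplectic machinery), but it relies on Proposition~\ref{left_invariant_interpretation} being established independently; in the paper's logical order that proposition is stated first but only proven \emph{via} Proposition~\ref{Gole_Karidi_form}, so invoking it as ``the immediately preceding result'' would be circular unless you supply a separate proof (e.g., from Corollary~\ref{right_invariant_interpretation} through the trivialization lemma).
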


Both Corollary~\ref{right_invariant_interpretation} and Proposition~\ref{left_invariant_interpretation}  lead to 
a one-form $\lambda(t) \in T^* _{\gamma(t)} G$   along the curve $\gamma$ in $G$.
The key to the equivalence of the right and left perspectives of these two propositions is that these one-forms along $\gamma$ are {\it equal}. 
For the right-invariant version, Corollary~\ref{right_invariant_interpretation}  provides first the constant covector $\lambda^R \in \g^* = T^* _e G$, and then its {\it right}-invariant
extension. Finally we evaluate this extension along $\gamma$. For the left-invariant version, following Proposition~\ref{left_invariant_interpretation},  we
take the curve of covectors $\eta(t)$,  consider their {\it left}-invariant extensions, say $\eta(t)^L$  (leading to a curve of left-invariant one-forms)
and finally  we   evaluate $\eta(t)^L$ at $\gamma(t)$.   The   following lemma  establishes
that the forms obtained in these two different ways coincide  along $\gamma$.

\begin{lemma} Let  $\gamma(t)$ be the curve in $G$ starting at $e$ and having  control $u(t)$. Let $\lambda(t)$ be a one-form defined along 
$\gamma$.  Let $\lambda ^R (t) =  (R_{\gamma(t)} )^* \lambda(t) \in \g^*$ be this one-form viewed by right-trivializing $T^*G$.
Let $\eta (t) = ( L_{\gamma(t)})^* \lambda(t) \in \g^*$ be this same one-form viewed by left-trivializing $T^*G$.
Then $\lambda^R (t)$ is constant if and only if $\eta(t)$ solves the time-dependent linear differential equation 
  $d \eta /dt = (\ad_{u(t)} )^* \eta(t)$ with initial condition $\eta(0) = \lambda(0)$.
\end{lemma}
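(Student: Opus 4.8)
The statement relates two trivializations of a one-form $\lambda(t)$ along $\gamma(t)$, so the natural approach is to compute the time-derivative of each trivialization directly, using the defining ODE $\dot\gamma(t) = (\dd L_{\gamma(t)})_e u(t)$, and to exhibit the precise relation between $\lambda^R(t)$ and $\eta(t)$. The key identity linking the two trivializations is $\lambda^R(t) = (\Ad_{\gamma(t)})^* \eta(t)$; indeed, for $X \in \g$,
\[
\lambda^R(t)(X) = \lambda(t)\big((\dd R_{\gamma(t)})_e X\big) = \lambda(t)\big((\dd L_{\gamma(t)})_e \Ad_{\gamma(t)}^{-1} X\big) = \eta(t)\big(\Ad_{\gamma(t)}^{-1}X\big),
\]
so that $\lambda^R(t) = (\Ad_{\gamma(t)^{-1}})^*\eta(t)$; equivalently $\eta(t) = (\Ad_{\gamma(t)})^*\lambda^R(t)$. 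From here the lemma reduces to a computation: differentiate $\eta(t) = (\Ad_{\gamma(t)})^*\lambda^R(t)$ in $t$.

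First I would set up the differentiation of $t \mapsto \Ad_{\gamma(t)}$. Since $\gamma(t)^{-1}\gamma(t+\eps) = \exp(\eps\, u(t) + o(\eps))$ (using the left-invariant ODE at a point of differentiability), we get $\Ad_{\gamma(t+\eps)} = \Ad_{\gamma(t)} \circ \Ad_{\exp(\eps u(t) + o(\eps))} = \Ad_{\gamma(t)} \circ (\id + \eps\,\ad_{u(t)} + o(\eps))$, hence
\[
\frac{\dd}{\dd t}\Ad_{\gamma(t)} = \Ad_{\gamma(t)}\circ \ad_{u(t)}.
\]
Dualizing, $\dfrac{\dd}{\dd t}(\Ad_{\gamma(t)})^* = (\ad_{u(t)})^* \circ (\Ad_{\gamma(t)})^*$. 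Now suppose $\lambda^R(t) \equiv \lambda^R(0)$ is constant. Then
\[
\frac{\dd \eta}{\dd t}(t) = \frac{\dd}{\dd t}\big((\Ad_{\gamma(t)})^*\lambda^R(0)\big) = (\ad_{u(t)})^*(\Ad_{\gamma(t)})^*\lambda^R(0) = (\ad_{u(t)})^*\eta(t),
\]
and since $\gamma(0) = e$ we have $\eta(0) = \lambda(0)$, giving the forward implication. For the converse, note that $(\Ad_{\gamma(t)})^*$ is invertible for each $t$, so $\lambda^R(t) = ((\Ad_{\gamma(t)})^*)^{-1}\eta(t)$; if $\eta$ solves the stated linear ODE, then differentiating this expression and using the product rule together with $\frac{\dd}{\dd t}((\Ad_{\gamma(t)})^*)^{-1} = -((\Ad_{\gamma(t)})^*)^{-1}(\ad_{u(t)})^*$ shows $\frac{\dd}{\dd t}\lambda^R(t) = 0$. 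Alternatively, both $\lambda^R(t)$ and its would-be constant value $\lambda^R(0)$ produce, via $\eta(t) = (\Ad_{\gamma(t)})^*\lambda^R$, solutions of the same linear ODE with the same initial datum, so uniqueness of solutions to linear ODEs forces equality, hence $\lambda^R$ is constant.

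The only genuine subtlety — and the place I would be most careful — is the sign and ordering conventions: whether one writes $\Ad_{\gamma(t)}$ or $\Ad_{\gamma(t)^{-1}}$ in the trivialization identity, and correspondingly whether $\ad_{u(t)}$ appears composed on the left or the right. These must be pinned down so that the final ODE matches $\dd\eta/\dd t = (\ad_{u(t)})^*\eta$ exactly as stated (and not its negative); the Remark following Proposition~\ref{left_invariant_interpretation} already flags that a sign ambiguity of precisely this flavor distinguishes the present convention from that of \cite{Montgomery_Singular_extremals_on_Lie_groups}. A secondary technical point is that $\gamma$ and $\eta$ are only absolutely continuous / $L^2$-controlled, so all the derivative identities hold for a.e.\ $t$ and the ODEs are understood in the integral (Carath\'eodory) sense; this is routine but should be mentioned. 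Modulo these bookkeeping matters, the proof is the short computation above, and no real obstacle is expected.
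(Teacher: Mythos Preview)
Your proposal is correct and follows essentially the same route as the paper: establish the identity $\eta(t) = (\Ad_{\gamma(t)})^*\lambda^R$, compute $\frac{\dd}{\dd t}(\Ad_{\gamma(t)})^* = (\ad_{u(t)})^*(\Ad_{\gamma(t)})^*$ via the factorization $\gamma(t+\Delta t) = \gamma(t)\,h(\Delta t)$, and read off the ODE. The paper carries out the same computation via an explicit difference quotient and then notes that the steps reverse; your treatment of the converse (product rule or ODE uniqueness) is slightly more detailed but not materially different.
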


Proof.  Suppose that $\lambda^R(t)$ is constant: $\lambda^R (t) \equiv \lambda^R$.  Set $g =\gamma(t)$. Then $\lambda(t) = ( R_{g}^{-1 })^* \lambda^R$ 
and consequently $\eta(t) =  (L_g )^*   (R_g ^{-1 })^* \lambda^R = (\Ad_g)^* \lambda^R$.  For small $\Delta t$ we write  $\gamma(t + \Delta t) = \gamma(t) (\gamma(t) ^{-1} \gamma(t + \Delta t) ) = g h$
with $h = h({\Delta t}) =  \gamma(t) ^{-1} \gamma(t + \Delta t)$ and use $(\Ad_{gh})^* = (\Ad_h) ^* (\Ad_g)^*$ to establish the identity for the difference quotient:
\[ \frac{1}{\Delta t} ( \eta(t + \Delta t) - \eta(t) ) = \frac{1}{\Delta t} ( (\Ad_{h(\Delta t)} )^* - {\rm Id}) \eta(t). \]
Now we use that the derivative of the adjoint representation $h \mapsto \Ad_h$
evaluated at the identity, is the standard adjoint representation $\g \to \mathfrak{gl}(\g)$ ,  $X \to \ad_X = [X, \cdot ]$.
Taking duals, we see that the difference quotient $\frac{1}{\Delta t} (( \Ad_{h(\Delta t)}) ^* - {\rm Id})$ 
limits to the linear operator  $(\ad_{u(t)}) ^*$ on $\g^*$.  
 
The steps just taken are reversed with little pain,  showing the equivalence.
 \qed
 
 \subsection{Hamiltonian formalism and reduction}
 \label{Ham_formalism}
 
We describe the Hamiltonian  perspective on Corollary~\ref{right_invariant_interpretation}, Proposition~\ref{left_invariant_interpretation} 
and the relation between them. 

 We continue   with the basis  $X_i$ of left-invariant vector fields on $G$,
labelled so that the first $r$ form a basis of $V$.  
Write $P_i : T^* Q \to \R$ for the same fields, but viewed as fiber-linear functions on the cotangent bundle of $G$:
\begin{equation}
\label{momentum_fns}
P_i: T^* G \to \R;  P_i (g,p) = p(X_i (g)).
\end{equation} 
 Given a choice of controls $u_a (t)$, $a =1 , 2 \ldots, r$  not all identically zero, form the Hamiltonian  
 \[H_u (g,p; t) = \sum_{i = 1} ^r u_a (t) P_a (g,p).\]
 The Maximum Principle  
  \cite[Theorem 12.1]{Agrachev_Sachkov}    asserts
    that a curve $\gamma$ in $G$ is singular for $V$ if and only if 
 when we take its control $u$, and form the Hamiltonian   $H_u$,
 then the corresponding Hamilton's equations  have a  nonzero solution $\zeta (t) = (q(t), p(t))$ that lies on the variety
$P_a = 0, a = 1, 2, \ldots, r$.    Here `Nonzero' means that   $p(t) \ne 0$, for all $t$.  
 The conditions $P_a = 0$ mean  that 
the solution lies in the  annihilator of  the distribution defined by $V$.  The first of Hamilton's equations, implies that $\gamma$
has control $u$, so that  the solution $\zeta$ does project onto  $\gamma$
 via the cotangent projection  $\pi: T^*G\to G$.

The following two facts regarding symplectic geometry and Hamilton's equations allow us to immediately
derive the Gol\'e-Karidi form of the equations as expressed in Proposition~\ref{Gole_Karidi_form}.  Fact 1.
Hamilton's equations   are equivalent to  their `Poisson form'
$\dot f = \{f, H \}$.  Here $f$ is an arbitrary smooth function on phase space,   $\dot f = df(X_H)$ is the derivative  of $f$ along the Hamiltonian
vector field $X_H$ for $H$, and   $\{f , g\}$ is the Poisson bracket associated to the canonical symplectic form
$\omega$, so that $\{f, g \} = \omega (X_f, X_g)$.
Fact 2. If $X$ is any  vector field on $G$ (invariant or not), and  if $P_X : T^*Q \to \R$ denotes the corresponding fiber-linear function defined by $X$ as above,  
then $\{P_X, P_Y \} = -P_{[X, Y]}.$ 

{\bf Proof of Proposition~\ref{Gole_Karidi_form} from the Maximum Principle.}
Take the $f = P_i$ and use, from Fact 2, that $\{P_i, P_j\} = -\sum c_{ij} ^k P_k$.
The $P_i$ are equal to the $\eta_i$ of the proposition.

Proposition~\ref{Gole_Karidi_form} is just the coordinate form of Proposition~\ref{left_invariant_interpretation},
so we have also proved Proposition~\ref{left_invariant_interpretation}.

{\bf Proof of Corollary~\ref{right_invariant_interpretation} from the Maximum principle.}

Let $\gamma(t)$ be a singular extremal leaving the identity with control $u = (u_1, \ldots , u_r)$.
Let $H_u$ be the time-dependent Hamiltonian generating the one-form $\zeta(t)$ along $\gamma$
as per the Maximum Principle. Since 
each of the $P_i$ are left-invariant, so is  $H_u$. 
Now any left-invariant  Hamiltonian  $H_u$ on the cotangent bundle of a Lie group
 admits $n = \dim(G)$ `constants' of motion -- these being the $n$ components of the momentum map
$J: T^*G \to \g^*$ for the action of $G$ on itself by {\it left} translation. Recall that a `constant of the motion' is a vector function that
is constant along all the solutions to Hamilton's equations.  Different solutions may have different constants. 
The  momentum map in this situation  is well-known to  
 equal   {\it right-}trivialization: $T^*G \to G \times \g^*$ composed  with projection onto the second factor.
In other words, if $\zeta (t)$ is any solution for $H_u$, then $J(\zeta(t)) = \lambda = {\rm const}$ and also
$J(\zeta(t)) = \dd R_{\gamma(t)} ^* \zeta(t)$.
  Now,  our $p(t)$ must annihilate $V_{\gamma(t)}$.
The fact that $p(t)$ equals $\lambda$, right-translated along $\gamma$, and that $\Delta_{\gamma(t)}$
equals to $V = \Delta_e$, {\it left-translated} along $\gamma$ implies that $\lambda(\Ad_{\gamma(t)} V) = 0$.
We have established the claim.
\qed

\subsection{Abnormal varieties and connection with extremal polynomials}\label{sec:abn:var}
The opportunity of considering the right-invariant trivialization of $T^*G$, hence arriving to
Corollary~\ref{right_invariant_interpretation}, was suggested by the results of the two papers \cite{LLMV, LLMV2}, where abnormal curves were characterized as those horizontal curves lying in specific algebraic varieties.

Given $\lambda \in \g^*\setminus{\{0\}}$ we set 
\begin{eqnarray}\label{abnormal_variety}
Z^{\lambda}:=\{g\in G\,:\,( (\Ad_{g})^*\lambda)_{|V }=0 \}.
\end{eqnarray}
In every Lie group the set $Z^{\lambda}$
is a proper real analytic variety.
If $G$ is a nilpotent group, then  $Z^{\lambda}$ is a proper real algebraic variety, which we call {\em abnormal variety}.

\begin{proposition}[Restatement of Corollary~\ref{right_invariant_interpretation}]\label{polynomial}
A horizontal curve $\gamma$ is abnormal if and only if
 $\gamma$ is contained in  $Z^{\lambda}$
 for some nonzero $\lambda \in \g^*$.
  \end{proposition}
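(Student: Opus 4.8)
The statement is essentially a repackaging of Corollary~\ref{right_invariant_interpretation}, so the plan is to unwind the definition of $Z^\lambda$ and match it term-by-term with condition (2) of that corollary. First I would recall that a horizontal curve $\gamma$ is \emph{contained in $Z^\lambda$} precisely when $g = \gamma(t) \in Z^\lambda$ for every $t \in [0,1]$, which by \eqref{abnormal_variety} means $\bigl((\Ad_{\gamma(t)})^*\lambda\bigr)_{|V} = 0$ for all $t$. Writing out the pairing, $\bigl((\Ad_{\gamma(t)})^*\lambda\bigr)(X) = \lambda(\Ad_{\gamma(t)} X)$ for $X \in V$, so this is exactly the statement $\lambda(\Ad_{\gamma(t)} V) = \{0\}$ for every $t \in [0,1]$.

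Next I would invoke Corollary~\ref{right_invariant_interpretation}, equivalence $(1)\Leftrightarrow(2)$: a horizontal curve $\gamma$ is abnormal if and only if there exists $\lambda \in \g^* \setminus \{0\}$ with $\lambda(\Ad_{\gamma(t)} V) = \{0\}$ for all $t$. Combining this with the previous paragraph gives immediately: $\gamma$ is abnormal $\iff$ there is a nonzero $\lambda \in \g^*$ with $\gamma$ contained in $Z^\lambda$, which is the claim. One small point worth a remark: the statement of the proposition says ``for some nonzero $\lambda \in \g^*$'' whereas \eqref{abnormal_variety} only requires $\lambda \ne 0$ to define $Z^\lambda$ as a \emph{proper} variety; I would just note that $Z^0 = G$ trivially contains every curve, so the nonvanishing hypothesis on $\lambda$ is what gives the statement content, and it is exactly the nonvanishing supplied by Corollary~\ref{right_invariant_interpretation}.

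There is essentially no obstacle here — the content of the proposition is entirely in Corollary~\ref{right_invariant_interpretation} and Proposition~\ref{primo-lemma} which precede it. The only thing to be careful about is the direction of the adjoint and its dual: I should make sure the convention $\bigl((\Ad_g)^*\lambda\bigr)(X) = \lambda(\Ad_g X)$ used in \eqref{abnormal_variety} agrees with the pairing $\lambda(\Ad_{\gamma(t)} V)$ appearing in the corollary, which it does by definition of the transpose map. So the proof is a two-line dereferencing of definitions plus a citation; the ``Restatement'' label in the proposition heading already signals that this is intended.
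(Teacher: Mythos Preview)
Your proposal is correct and matches the paper's approach exactly: the paper gives no separate proof for this proposition, labeling it a ``Restatement'' of Corollary~\ref{right_invariant_interpretation}, and your argument is precisely the definitional unwinding that justifies that label.
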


We now prove that, in the context of Carnot groups, the algebraic varieties $Z^{\lambda}$ coincide with the varieties introduced in the papers \cite{LLMV,LLMV2}. This will follow from Proposition~\ref{prop:abnvar} below.

Let $e_1,\ldots,e_n$ be a basis of $\g$ such that $e_1,\ldots,e_r$ is a basis of $V$. Let $X_i$ denote the extension of $e_i$ as a left-invariant vector field on $G$. Let $c_{ij}^{k}$ be the structure constants of $\g$ in this basis, i.e.,
\begin{eqnarray*}
[X_i,X_j]=\sum_{k}c_{ij}^{k}X_k .
\end{eqnarray*}
For $\lambda\in\g^*$, set
\begin{eqnarray*}
P^{\lambda}_i(g):=  ((\Ad_{g})^*\lambda)(e_i).
\end{eqnarray*}
Thus $Z^\lambda$ is the set of common zeros of the functions $P^{\lambda}_i$, $i=1,\ldots,r$. When $G$ is nilpotent, these functions are polynomials.

\begin{proposition}\label{prop:abnvar}
Let $Y_m$ denote the extension of $e_m$ as a right-invariant vector field on $G$. Let $e^*_1,\ldots,e^*_n$ denote the  basis vectors of $\g^*$ dual to $e_1,\ldots,e_n$. For all $i,j=1,\dots,n$,
 we have
\begin{equation}\label{obs:Pierre}
X_i=\sum_m P^{e^*_m}_i Y_m . 
\end{equation}
Moreover, the functions $P^{\lambda}_j$ satisfy $P^{\lambda}_j(e)= \lambda(e_j)$ and
\begin{equation}\label{eq:Maya}
X_i P^{\lambda}_j =\sum_{k=1}^n c_{ij}^{k}P^{\lambda}_k, \quad\forall\:i,j=1,\dots,n, \lambda \in\g^*.
\end{equation}
In particular, in the setting of Carnot groups the functions $P^{\lambda}_j$ coincide with the {\em extremal polynomials} introduced in \cite{LLMV,LLMV2}.
\end{proposition}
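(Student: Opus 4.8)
The plan is to verify the three assertions — the expansion \eqref{obs:Pierre}, the initial condition $P^\lambda_j(e)=\lambda(e_j)$, and the transport equation \eqref{eq:Maya} — and then identify the resulting system with the defining recursion of the extremal polynomials of \cite{LLMV,LLMV2}. The key observation that organizes everything is the relation between left- and right-invariant vector fields: for $g\in G$, one has $X_i(g)=(\dd L_g)_e e_i$ while $Y_m(g)=(\dd R_g)_e e_m$, so that $X_i(g)=(\dd R_g)_e\big((\dd R_g)_e^{-1}(\dd L_g)_e e_i\big)=(\dd R_g)_e(\Ad_g e_i)$. Writing $\Ad_g e_i=\sum_m \big(e^*_m(\Ad_g e_i)\big)e_m=\sum_m \big((\Ad_g)^*e^*_m\big)(e_i)\,e_m=\sum_m P^{e^*_m}_i(g)\,e_m$ and pushing forward by $(\dd R_g)_e$ gives \eqref{obs:Pierre} immediately. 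The initial condition is just $(\Ad_e)^*=\id$, so $P^\lambda_j(e)=\lambda(e_j)$.

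For the transport equation \eqref{eq:Maya}, I would compute $X_i P^\lambda_j$ at a point $g$ by differentiating $t\mapsto P^\lambda_j\big(g\exp(te_i)\big)=\big((\Ad_{g\exp(te_i)})^*\lambda\big)(e_j)=\lambda\big(\Ad_{g\exp(te_i)}e_j\big)$ at $t=0$. Since $\Ad_{g\exp(te_i)}=\Ad_g\circ\Ad_{\exp(te_i)}=\Ad_g\circ e^{t\,\ad_{e_i}}$, the $t$-derivative at $0$ is $\Ad_g(\ad_{e_i}e_j)=\Ad_g([e_i,e_j])=\sum_k c_{ij}^k \Ad_g e_k$. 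Applying $\lambda$ yields $X_i P^\lambda_j(g)=\sum_k c_{ij}^k\,\lambda(\Ad_g e_k)=\sum_k c_{ij}^k\,P^\lambda_k(g)$, which is \eqref{eq:Maya}. (One should note this is precisely the left-invariant adjoint/transport equation already recorded in Proposition~\ref{Gole_Karidi_form}, restricted off the curve to all of $G$; the computation is the same as the difference-quotient argument in the lemma of Section~\ref{Ham_formalism}, done here globally rather than along $\gamma$.)

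Finally, for the identification with the extremal polynomials: in \cite{LLMV,LLMV2} these polynomials are characterized (in the Carnot setting, where $\g$ is nilpotent and $\exp$ is a global diffeomorphism, so the $P^\lambda_j$ are genuinely polynomial in exponential coordinates) by exactly the data of the initial values $P^\lambda_j(e)=\lambda(e_j)$ together with the first-order system \eqref{eq:Maya} — i.e., they are the unique polynomial solutions of that Cauchy problem, generated starting from $V^\perp$ and iterating the derivations $X_i$. Since a solution of \eqref{eq:Maya} with prescribed value at $e$ is uniquely determined (the $X_i$ generate $\g$, hence a global frame, and a function annihilated by all $X_i$ is constant, so by connectedness of $G$ the solution is unique given $P^\lambda_j(e)$), the functions $P^\lambda_j$ defined via $\Ad^*$ must coincide with the extremal polynomials. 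The only mild subtlety — and the step most worth stating carefully rather than the routine differentiations — is checking that the normalization and sign conventions of \cite{LLMV,LLMV2} match ours (which index runs over $V$, how $\lambda\in V^\perp$ is seeded, and the sign of $c_{ij}^k$ with left- versus right-invariant fields); once the conventions are aligned, uniqueness of solutions to \eqref{eq:Maya} closes the argument, and in particular $Z^\lambda=\{P^\lambda_1=\dots=P^\lambda_r=0\}$ is the abnormal variety of \cite{LLMV,LLMV2}.
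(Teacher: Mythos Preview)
Your proof is correct and close in spirit to the paper's, but there are two genuine differences in execution worth noting.

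For \eqref{eq:Maya}, you differentiate $P^\lambda_j$ directly along the flow of $X_i$ using $\Ad_{g\exp(te_i)}=\Ad_g\circ e^{t\,\ad_{e_i}}$. The paper instead derives \eqref{eq:Maya} as a consequence of \eqref{obs:Pierre}: it computes $[X_i,X_j]$ in two ways --- once using that left- and right-invariant fields commute, so $[X_i,X_j]=\sum_m (X_iP^{e^*_m}_j)Y_m$, and once using the structure constants and \eqref{obs:Pierre} again --- and then equates coefficients in the frame $\{Y_m\}$. Your route is arguably the more direct one; the paper's has the mild aesthetic advantage of making \eqref{eq:Maya} a formal corollary of \eqref{obs:Pierre}.

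For the identification with the extremal polynomials, you invoke a general uniqueness principle for the coupled first-order system \eqref{eq:Maya} with prescribed values at $e$. (Be slightly careful here: the parenthetical ``a function annihilated by all $X_i$ is constant'' is not literally what you need, since the $Q_j$ satisfy $X_iQ_j=\sum_k c_{ij}^kQ_k$, not $X_iQ_j=0$; the correct justification is ODE uniqueness for this homogeneous linear system along curves, together with connectedness.) The paper proceeds more concretely, exploiting the stratification: since the basis is adapted, $c_{ij}^k=0$ whenever $k\le j$, so the system is upper-triangular and one shows $Q_n\equiv0$, then $Q_{n-1}\equiv0$, etc., by reverse induction on $j$, each step reducing to ``$X_iQ_j=0$ for all $i$ and $Q_j(e)=0$''. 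Your argument is more general (it works on any connected Lie group), while the paper's makes explicit use of the Carnot structure and avoids appealing to ODE theory.
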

\begin{proof}
We verify \eqref{obs:Pierre} by
\[
\sum_m P^{e^*_m}_i(g) Y_m(g) =\sum_m  (\Ad_{g})^*(e^*_m)(e_i) (R_g)_* e_m 
=\sum_m  e^*_m(\Ad_{g}(e_i)) (R_g)_* e_m 
\]
\[ 
=(R_g)_* \sum_m  e^*_m(\Ad_{g}(e_i))  e_m 
=(R_g)_* \Ad_{g}(e_i)
=(L_g)_*  e_i 
=X_i(g).
\]
Next, on the one hand,  since $[X_i,Y_j]=0$,
\begin{eqnarray*}
[X_i,X_j]=\sum_m (X_i P^{e^*_m}_j) Y_m.
\end{eqnarray*}
On the other hand, from \eqref{obs:Pierre}
\[
[X_i,X_j]=\sum_k c^k_{ij} X_k 
=\sum_m (\sum_k c^k_{ij} P^{e^*_m}_k) Y_m.
\]
Thus
\begin{eqnarray*}
X_i P^{e^*_m}_j = \sum_k c^k_{ij} P^{e^*_m}_k,\qquad\forall\:i,j,m=1,\dots,n. 
\end{eqnarray*}
Formula \eqref{eq:Maya} follows because, by definition, the functions $P_j^\lambda$ are  linear in $\lambda$.

The extremal polynomials $(P_j^v)_{j=1,\dots,n}^{v\in\R^n}$ were introduced in \cite{LLMV,LLMV2} in the setting of Carnot groups; they were explicitly defined in a system of exponential coordinates of the second type associated to a basis of
 $\mathfrak g$ that is
 adapted
to the stratification of $\g$, see Section~\ref{CarnotGroups}.
 Here, {\em adapted} simply means that the fixed basis $e_1,\ldots,e_n$  of $\g$ consists of an (ordered) enumeration of a basis of the first layer $V_1$, followed by a basis of the second layer $V_2$, etc.
 It was proved in \cite{LLMV2} that the extremal polynomials satisfy 
\[
P_j^v(e)=v_j\quad\text{and}\quad X_i P^{v}_j =\sum_{k=1}^n c_{ij}^{k}P^{v}_k\qquad\forall\:i,j=1,\dots,n,\forall\:v\in\R^n.
\]
We need to check that, for any fixed $v\in\R^n$,
the equality  $P_j^v=P_j^\lambda$ holds for $\lambda:=\sum_m v_m e_m^*$. Indeed, the differences $Q_j:=P_j^v-P_j^\lambda$ satisfy
\[
Q_j(e)=0\quad\text{and}\quad X_i Q_j =\sum_{k=1}^n c_{ij}^{k}Q_k\qquad\forall\:i,j=1,\dots,n.
\]
In particular, $X_iQ_n=0$ for any $i$ because, by the stratification assumption, $c_{in}^k=0$ for any $i,k$. This implies that $Q_n$ is constant, i.e., that $Q_n\equiv 0$. We can then reason by reverse induction on $j$ and assume that $Q_k\equiv 0$ for any $k\geq j+1$; then, using the fact that $c_{ij}^{k}=0$ whenever $k\leq j$ (because the basis is adapted to the stratification), we have
\[
Q_j(e)=0\quad\text{and}\quad X_i Q_j =\sum_{k=j+1}^n c_{ij}^{k}Q_k=0\qquad\forall\:i=1,\dots,n.
\]
Hence also $Q_j\equiv 0$. This proves that $P_j^v=P_j^\lambda$, as desired.
\end{proof}

\begin{remark}\label{rem:W:linear}
In the study of Carnot groups of step 2 and step 3, it will be used that the varieties  $W^\lambda$  defined below (which coincide with the abnormal varieties in the step-2 case) are subgroups. 
Namely, if $G$ is a Carnot group of step $s$ and highest layer $V_s$, and $\lambda\in \g^*$, then 
the variety 
\begin{equation}
W^\lambda :=\{ g \in G : ((\Ad_g)^*\lambda)_{|V_{s-1}} =0  \}
\end{equation}
is a subgroup, whenever it contains the origin.
Indeed,
if $X\in \g$ and $Y \in V_{s-1}$, then 
\begin{eqnarray*}
( \Ad_{\exp(X)}  )^*\lambda (Y)=(e^{\ad_X}  )^*\lambda (Y) = \lambda(Y +[X,Y]).
\end{eqnarray*}
Hence, in exponential coordinates the set   $W^\lambda$ is
\[ \{ X \in \g  :   \lambda (Y + [X,Y]) = 0,\, \forall Y \in V_{s-1}  \}\]
and, if it contains the origin, it is 
\[ \{X \in \g  :    \lambda ( [X,Y]) = 0 , \forall Y \in V_{s-1} \} .\]
Since the condition $ \lambda ( [X,Y]) = 0$, for all $Y \in V_{s-1}$,  is linear in $X$,
we conclude that $W^\lambda$ is a subgroup.
\end{remark}

\subsection{Lifts of abnormal curves}   				 


\begin{proposition}[Lifts of abnormal is abnormal]\label{lift_abnormals}
Let $\gamma:[0,1]\to G$ be a horizontal curve with respect to $V\subset \g$.
If there exists a Lie group $H$ and a surjective homomorphism $\pi: G\to H$ for which 
$\pi\circ \gamma $ is abnormal with respect to some $W\supseteq \dd \pi_e (V)$, 
then 
$\gamma$ is abnormal.
\end{proposition}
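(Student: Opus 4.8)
The plan is to contrapose against Corollary~\ref{right_invariant_interpretation} (equivalently Proposition~\ref{primo-lemma}): to show $\gamma$ is abnormal, it suffices to produce a proper subspace of $\g$ containing $\Ad_{\gamma(t)}V$ for every $t\in[0,1]$, and I will build this subspace by pulling back the analogous data for $\pi\circ\gamma$ through $\dd\pi_e$. First I would record the naturality of the adjoint action under a Lie group homomorphism: since $\pi$ is a homomorphism, $\pi\circ L_g = L_{\pi(g)}\circ\pi$ and $\pi\circ R_g = R_{\pi(g)}\circ\pi$, and differentiating gives $\dd\pi_e\circ\Ad_g = \Ad_{\pi(g)}\circ\dd\pi_e$ for all $g\in G$. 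Applying this with $g=\gamma(t)$ yields
\begin{equation*}
\dd\pi_e\big(\Ad_{\gamma(t)}V\big)=\Ad_{\pi(\gamma(t))}\big(\dd\pi_e(V)\big)\subseteq \Ad_{\pi(\gamma(t))}W
\end{equation*}
for every $t$, using the hypothesis $\dd\pi_e(V)\subseteq W$.

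Next I would invoke abnormality of $\pi\circ\gamma$ with respect to $W$: by Corollary~\ref{right_invariant_interpretation} applied in $H$, there is $\mu\in\h^*\setminus\{0\}$ with $\mu\big(\Ad_{\pi(\gamma(t))}W\big)=\{0\}$ for all $t$, where $\h=\Lie(H)$. Set $\lambda:=(\dd\pi_e)^*\mu=\mu\circ\dd\pi_e\in\g^*$. Combining with the inclusion above, for every $t$ and every $X\in V$,
\begin{equation*}
\lambda\big(\Ad_{\gamma(t)}X\big)=\mu\big(\dd\pi_e(\Ad_{\gamma(t)}X)\big)\in\mu\big(\Ad_{\pi(\gamma(t))}W\big)=\{0\},
\end{equation*}
so $\lambda\big(\Ad_{\gamma(t)}V\big)=\{0\}$ for all $t\in[0,1]$. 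By Proposition~\ref{primo-lemma} (via Corollary~\ref{right_invariant_interpretation}), this shows $\gamma$ is abnormal, provided $\lambda\neq 0$.

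The one genuine point to verify — and the step I expect to be the main obstacle — is that $\lambda=(\dd\pi_e)^*\mu$ is nonzero, i.e., that $\mu$ does not annihilate $\dd\pi_e(\g)=\im(\dd\pi_e)$. This is where surjectivity of $\pi$ enters: since $\pi\colon G\to H$ is a surjective homomorphism of Lie groups, its differential $\dd\pi_e\colon\g\to\h$ is surjective (the image is a subalgebra whose corresponding connected subgroup is open, hence all of the connected $H$; alternatively one argues directly from surjectivity on a neighborhood of $e$ via the exponential map). Therefore $\im(\dd\pi_e)=\h$, and $\mu\neq 0$ on $\h$ forces $\lambda=\mu\circ\dd\pi_e\neq 0$. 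This completes the argument; everything else is the routine naturality computation above, so I would keep the written proof to essentially these three displays plus the remark that $\dd\pi_e$ is onto.
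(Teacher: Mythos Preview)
Your proof is correct. It differs from the paper's argument, which works directly with the endpoint maps: the paper observes that the square
\[
\xymatrix{
L^2([0,1],V) \ar[r]^{\quad\End^V} \ar[d]_{\pi_*} & G \ar[d]^{\pi} \\
L^2([0,1],W) \ar[r]^{\quad\End^W} & H
}
\]
commutes (where $\pi_* u = \dd\pi_e\circ u$), differentiates, and uses surjectivity of $\dd\pi_{\gamma(1)}$ to conclude that $\dd\End^V_u$ onto forces $\dd\End^W_{\pi_*u}$ onto. You instead pass through the covector characterization of Corollary~\ref{right_invariant_interpretation}, pulling back the abnormal covector $\mu\in\h^*$ via $(\dd\pi_e)^*$ and using the naturality $\dd\pi_e\circ\Ad_g=\Ad_{\pi(g)}\circ\dd\pi_e$. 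Both routes hinge on the same fact, namely surjectivity of $\dd\pi_e$; your version has the minor advantage of exhibiting explicitly the abnormal covector for $\gamma$. One small quibble: your first justification that $\dd\pi_e$ is onto (``the corresponding connected subgroup is open'') is circular as phrased; your alternative via $\pi\circ\exp_G=\exp_H\circ\dd\pi_e$ and connectedness of $G$ is the clean argument and is the one to keep.
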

\begin{proof}
Let $\End^V$ and $\End^W$ be the respective endpoint maps, as in the diagram below.
For   $u\in L^2([0,1],V)$ 
let 
$\pi_*u:=  \dd \pi_e\circ u$, which is an element in $L^2([0,1],W)$, because $ \dd \pi_e (V)\subseteq W$. 
Since $\pi$ is a group homomorphism, one can easily check that the following diagram commutes:
\[ 
\xymatrix{
	L^2([0,1],V)
	\ar[rr]^{\qquad\End^V} 
		\ar[d]_{\pi_*} 
	& &
		G \ar[d]^{\pi} 
\\
	L^2([0,1],W) \ar[rr]^{\qquad\End^W}  	  
	&  
	&  H.
		}
\]
By assumption $\pi$ is surjective and so is $\dd \pi_g$, for all $g\in G$.
We conclude that
$\dd \End^W_{\pi_*u}$ is surjective, whenever
$\dd \End^V_{u}$ is surjective.
\end{proof}

    \begin{example}[Abnormal curves in a product]\label{ex:product}
    Let $G$ and $H$ be two Lie groups. Let $V\subset \Lie(G) $ and $W\subset \Lie(H) $.
    Assume that
    $W\neq \Lie(H) $.
    Let   $\gamma:[0,1]\to G\times H$ be a  curve. 
    If
     $\gamma=(\gamma_1(t),e)$
    with $\gamma_1:[0,1]\to G$ horizontal with respect to $V$,
    then
    $\gamma$ is abnormal with respect to $V\times W$.
    Indeed, this fact is an immediate consequence  of Proposition~\ref{lift_abnormals} using the projection
    $G\times H\to H$ and the fact that the constant curve in $H$ is 
    abnormal with respect to the proper subspace $W$.
    \end{example}
    
\begin{remark}\label{rmk:product}
 Let $G$ and $H$ be two Lie groups. 
    If
    $\gamma_1:[0,1]\to G$ is not abnormal with respect to some  $V\subset \Lie(G) $  and
    $\gamma_2:[0,1]\to H$ is not abnormal with respect to some $W\subset \Lie(H) $,
    then 
    $(\gamma_1,\gamma_2):[0,1]\to G\times H$ is not abnormal with respect to $V\times W$.
\end{remark}

     \begin{example}[$H\times H$]
     Let $H$ be the Heisenberg group equipped with its contact structure.
     By Example~\ref{ex:product} and Remark~\ref{rmk:product},
     the abnormal curves leaving from the origin in $H\times H$ are the curves of the form
     $(\gamma(t),e)$ or $(e,\gamma(t))$, where $\gamma:[0,1]\to H$ is {\em any} horizontal curve.
     In particular, $\Abn(e)=H\times\{e\} \cup \{e\}\times H$, which has codimension $3$.     
     \end{example}

\subsection{Normal curves}\label{subsection:normal}
Let $(G,V)$ be a polarized group such that $V$ is bracket generating. Equipping $V$ with a scalar product $\|\cdot \|_2$, we get a left-invariant sub-Riemannian structure on $G$.
Recall that from Pontrjagin Maximum Principle 
any curve that is
length minimizing with respect to the sub-Riemannian distance is either abnormal, or normal (in the sense that we now recall), or both normal and abnormal.
A curve $\gamma$ with control $u$ is {\em normal} if there exist $\lambda_0\not=0$ and $\lambda_1\in T_{\gamma(1)}^{*}G$ such that $(\lambda_0,\lambda_1)$ vanishes on the image of the differential at $u$ of the {\em extended endpoint map} $\widetilde{\End}:L^2([0,1],V)\to\R\times G$, $v\mapsto (\|v\|_2,\End(v))$.
   Let $\Abn^{nor}(e)$ denote the set of points connected to the origin by curves which are both normal and abnormal. 
        Let $\Abn^{lm}(e)$ denote the set of points connected to the origin by  abnormal curves
  that are locally length minimizing with respect to the sub-Riemannian distance.
 
\begin{lemma}\label{lemma:abn:nor}
Let $G$ be a polarized Lie group. The Sard Property holds for normal abnormals. Namely, the set $\Abn^{nor}(e)$ is contained in a sub-analytic set of codimension at least 1.
\end{lemma}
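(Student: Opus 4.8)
Lemma~\ref{lemma:abn:nor} claims that the set $\Abn^{nor}(e)$ of points joined to the origin by curves that are simultaneously normal and abnormal lies in a sub-analytic set of codimension $\geq 1$.

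The plan is to exploit the rigidity of normal curves. A normal extremal is the projection to $G$ of a solution of the Hamiltonian system for the normal (sub-Riemannian) Hamiltonian $H(g,p)=\tfrac12\sum_{a=1}^r P_a(g,p)^2$ on $T^*G$, with initial covector $p(0)=\lambda_0\in\g^*$; equivalently, in the left-trivialization of $T^*G$ the curve $\eta(t)\in\g^*$ satisfies the Euler-type ODE $\dot\eta=(\ad_{u(t)})^*\eta$ with $u_a(t)=\eta(t)(X_a)=P_a$, so the whole trajectory is determined by the single datum $\eta(0)\in\g^*$. Thus the set of right endpoints of normal curves is the image of the \emph{normal exponential map} $\mathrm{Exp}_e\colon\g^*\to G$, $\lambda_0\mapsto\gamma_{\lambda_0}(1)$, which is a real-analytic map from the finite-dimensional vector space $\g^*$ to $G$. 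By Proposition~\ref{left_invariant_interpretation} (and Corollary~\ref{right_invariant_interpretation}), a curve that is \emph{also} abnormal must in addition satisfy $u(t)\in\ker w(\eta(t))$ for all $t$, equivalently $\lambda\bigl(\Ad_{\gamma(t)}V\bigr)=0$ for some nonzero $\lambda$; for a normal curve this forces the initial covector $\lambda_0$ to lie in a proper real-analytic subset $\mathcal A\subset\g^*$ (it is an extra system of analytic equations beyond the normal ODE, cut out by the requirement that the normal trajectory stay inside some $Z^\mu$).

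First I would make precise the statement "$\Abn^{nor}(e)\subseteq \mathrm{Exp}_e(\mathcal A)$" with $\mathcal A\subsetneq\g^*$ a proper real-analytic subvariety: every normal-abnormal curve has an initial covector, the normal ODE is analytic in $(t,\lambda_0)$ so $\mathrm{Exp}_e$ is analytic, and abnormality of the resulting curve is an analytic closed condition on $\lambda_0$ not satisfied identically (it fails, e.g., for generic $\lambda_0$ since a generic normal geodesic is strictly normal). Then $\Abn^{nor}(e)$ is contained in the image under an analytic map of a proper analytic subvariety of a finite-dimensional space; by the theory of sub-analytic sets (Hironaka, Bierstone--Milman), the image of such a set under a proper — or, after exhausting $\g^*$ by compacts, locally finite — analytic map is sub-analytic, of dimension at most $\dim\mathcal A<\dim\g^*=\dim G$. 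Hence $\Abn^{nor}(e)$ sits in a sub-analytic set of codimension at least $1$.

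The main obstacle is the properness/locally-finite image issue: $\mathrm{Exp}_e$ is defined on all of the noncompact space $\g^*$, and images of non-proper analytic maps need not be sub-analytic, so one must either restrict to $\mathrm{Exp}_e$ on an exhaustion by balls $\overline B(0,R)\cap\mathcal A$ (each image sub-analytic, and $\Abn^{nor}(e)$ is the countable increasing union, still locally sub-analytic of the right dimension) or invoke the relative-compactness argument that only covectors in a bounded region are needed once the endpoint is fixed in a compact set. A secondary point requiring care is verifying that $\mathcal A$ is \emph{proper}, i.e., that not every normal curve is abnormal — this is where one uses that the normal geodesic flow is non-degenerate (for a bracket-generating left-invariant structure a generic normal geodesic has trivial abnormal multiplier), so the extra equations genuinely cut down the dimension. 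Everything else is routine: the passage from "image of proper analytic subvariety under analytic map" to "sub-analytic of codimension $\geq 1$" is standard.
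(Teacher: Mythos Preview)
Your approach is in the right direction but misses the key simplification used in the paper. The paper observes that the sub-Riemannian exponential map factors as $Exp = \End \circ \widetilde{Exp}$, where $\widetilde{Exp}\colon T^*_eG \to L^2([0,1],V)$ sends an initial covector to the control of the corresponding normal extremal. By the chain rule, whenever $\dd\End_u$ is not onto (i.e., $\gamma_u$ is abnormal) at $u = \widetilde{Exp}(\lambda_0)$, the differential $\dd Exp_{\lambda_0}$ is not onto either. Thus every point of $\Abn^{nor}(e)$ is automatically a \emph{singular value} of the finite-dimensional analytic map $Exp\colon T^*_eG\to G$. Sard's theorem gives measure zero, and sub-analyticity of the critical-value set yields codimension at least $1$.

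This bypasses both technical steps you spend effort on. You need not show that your set $\mathcal A=\{\lambda_0:\gamma_{\lambda_0}\text{ is abnormal}\}$ is itself analytic --- the ``for all $t$'' quantifier in the abnormality condition makes this nontrivial, and you do not actually supply an argument for it. Nor do you need $\mathcal A$ to be a \emph{proper} subset: Sard's theorem places no such demand on the critical locus. In effect, the paper replaces your $\mathcal A$ by the manifestly analytic set $\mathrm{Crit}(Exp)\supseteq\mathcal A$ and replaces your dimension count by the classical Sard theorem between equidimensional manifolds.

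You correctly flag the properness issue for sub-analyticity of the image; the paper is terse on this point as well. However, your proposed fix --- taking a countable union of images $Exp(\overline{B(0,R)}\cap \mathcal A)$ over $R\in\N$ --- does not produce a sub-analytic set as stated, since sub-analytic sets are not closed under countable unions.
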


\begin{proof}
We will make use of the sub-Riemannian exponential map, see \cite{}.
Namely, normal curves starting from $e$ have cotangent lifts which satisfy a Hamiltonian equation. Solving this equation with initial datum $\xi\in T^*_e G$ defines a control $\widetilde{Exp}(\xi)\in L^2([0,1],V)$. Composing with the endpoint map, one gets the sub-Riemannian exponential map $Exp:T^*_e G\to G$,
\begin{eqnarray*}
Exp=\End\circ \widetilde{Exp}.
\end{eqnarray*}
Points in $\Abn^{nor}(e)$ are values of $Exp$ where the differential of $\End$ is not onto. Therefore, they are singular values of $Exp$. Since $Exp$ is analytic, the set of its singular points is analytic, thus the set of its singular values is a sub-analytic subset of $G$. By Sard's theorem, it has measure zero, therefore its codimension is at least 1.
\end{proof}

%
%
%

\subsection{The Goh condition}\label{subsection:Goh}
Let $(G,V)$ be a polarized group as in Section~\ref{subsection:normal}. We introduce the well-known  Goh condition by using the formalism of Corollary \ref{right_invariant_interpretation}.

\begin{definition}
We say that an abnormal curve $\gamma:[0,1]\to G$ leaving from the origin $e$ satisfies the Goh condition if there exists $\lambda\in \mathfrak g^\ast\setminus\{0\}$ such that
\begin{equation}\label{eq:defiGoh}
\lambda (\Ad_{\gamma(t)} (V + [V,V])) = 0\quad\text{for every }t\in[0,1].
\end{equation}
\end{definition}

Equivalently, $\gamma$ satisfies the Goh condition if and only if there exists a right-invariant 1-form $\alpha$ on $G$ such that $  \alpha( \Delta^2_{\gamma(t)} )=\{0\}$ for every $t\in [0,1]$, where $\Delta^2$ is the left-invariant distribution induced by $V + [V,V]$. Equivalently, denoting by $u$ the controls associated with $\gamma$ and recalling Proposition~\ref{primo-lemma}, if and only if the space
\begin{equation}\label{eq:toGoh}
\bigcup_{t\in[0,1]} \Ad_{\gamma(t)} (V + [V,V]) = dR_{\gamma(1)}^{-1} ({\rm Im }( \dd\End_u  )) + \bigcup_{t\in[0,1]}\Ad_{\gamma(t)} ([V,V])
\end{equation}
is a proper subspace of $\mathfrak g=T_eG$, which  a posteriori is contained in ker $\lambda$, for $\lambda$ as in \eqref{eq:defiGoh}. 

\begin{remark}\label{rem:anniGoh}
Clearly, any $\lambda$ such that \eqref{eq:defiGoh} holds is in the annihilator of $V+[V,V]$,
just by considering $t=0$ in \eqref{eq:defiGoh}. 
\end{remark}

The importance of the Goh condition stems from the following well-known fact: if $\gamma$ is a {\em strictly abnormal} length minimizer (i.e., a length minimizer that is abnormal but not also normal), then it satisfies Goh condition for some $\lambda\in \mathfrak g^\ast\setminus\{0\}$. See \cite[Chapter 20]{Agrachev_Sachkov} and also \cite{Agrachev_Sarychev}.   Notice that not necessarily all the $\lambda$'s as in (2) of Corollary \ref{right_invariant_interpretation}  will satisfy \eqref{eq:defiGoh}, but at least one will. On the contrary, in the particular case dim $V=2$,  
every abnormal curve satisfies the Goh condition for every $\lambda$  as in Corollary \ref{right_invariant_interpretation} (2); see Remark \ref{rem:Gohrank2} and  \eqref{eq:Gohrank2} in particular.


\section{Step-$2$ Carnot groups}   		\label{sec:step:two}		 
\subsection{Facts about abnormal curves in   two-step Carnot groups}
\label{facts2step}
We want to study the abnormal set ${\rm Abn}(e)$ defined in \eqref{abnormal:set} with the use of the abnormal  varieties defined in \eqref{abnormal_variety}. In fact,
by Proposition~\ref{polynomial} we have the inclusion 
\begin{eqnarray*}
{\rm Abn}(e)\subseteq \bigcup_{\substack{\lambda \in \g^* \setminus \{0\}\text{ s.t. }e\in Z^\lambda}} Z^\lambda.
\end{eqnarray*}
In this section we will consider 
the case when the polarized group $(G, V)$ is   a Carnot group   of step 2. Namely,  
the  Lie algebra of $G$ admits the decomposition $\g=V_1\oplus V_2$ with $V=V_1$, $[V_1, V_1]=V_2$, and $[\g, V_2]=0$.
Fix an element $\lambda\in \g^*$. Since $\g^*=V_1^*\oplus V_2^*$, we can write    $\lambda=\lambda_1+\lambda_2$ with 
$\lambda_i\in V_i^*$.
As noticed in Remark \ref{rem:W:linear},
since $G$ has step $2$, if $X\in \g$ and $Y\in V_1$, then
\begin{eqnarray*}
( \Ad_{\exp(X)}  )^*\lambda (Y)=(e^{\ad_X}  )^*\lambda (Y) = \lambda_1(Y) + \lambda_2([X,Y]).
\end{eqnarray*}
Notice that, if  $e=\exp(0)\in Z^\lambda$, then $\lambda_1(Y) =0$ for all $Y\in V_1$. Thus $\lambda_1=0$.
 Therefore, any variety $Z^\lambda$ containing the identity is of the form
\begin{eqnarray*}
Z^\lambda=Z^{\lambda_2}=\exp\{X\in \g \;:\;    \lambda_2([X,Y]) = 0 \ \forall\:  Y\in V_1   \}. 
\end{eqnarray*}
The condition 
\begin{eqnarray*}
\lambda_2([X,Y]) = 0,\qquad \ \forall\:  Y\in V_1,
\end{eqnarray*}
is linear in $X$, hence the set
\begin{eqnarray*}
\mathfrak z^\lambda:= \log(Z^\lambda)=  \{X\in \g \;:\;    \lambda_2([X,Y]) = 0 \ \forall  Y\in V_1  \} 
\end{eqnarray*}
is a vector subspace. One can easily check that $\exp(V_2)\subset Z^\lambda$, hence $V_2\subset \mathfrak z^\lambda$. In particular,  $\mathfrak z^\lambda$ is an ideal and $Z^\lambda = \exp(\mathfrak z^\lambda)$ is a normal subgroup of $G$. Actually, one has $\mathfrak z^\lambda = (\mathfrak z^\lambda\cap V_1)\oplus V_2$.
The space $\mathfrak z^\lambda\cap V_1$ is by definition 
the kernel of the  skew-symmetric form on $V_1$, which we already encountered in \eqref{curv_map}, defined by
\begin{eqnarray*}
w(\lambda):(X,Y)\mapsto \lambda_2([X,Y]).
\end{eqnarray*}

If now $\gamma$ is a horizontal curve contained in $Z^\lambda$ (and hence abnormal) with $\gamma(0)=0$, then $\gamma$ is contained in the subgroup $H^\lambda$ generated by  $\mathfrak z^\lambda \cap V_1$, i.e.,
\begin{equation}\label{H:lambda}
H^\lambda :=\exp((\mathfrak z^\lambda \cap V_1)\oplus [\mathfrak z^\lambda \cap V_1,\mathfrak z^\lambda \cap V_1]).
\end{equation}
This implies that
\begin{eqnarray*}
{\rm Abn}(e) \subseteq \bigcup_{\substack{\lambda \in \mathfrak g^* \setminus \{0\}\\\lambda_1=0}} H^\lambda.
\end{eqnarray*}
It is interesting to notice that also  the reverse inclusion holds: indeed, for any $\lambda\in \mathfrak g^* \setminus \{0\}$ with $\lambda_1=0$ and any point $p\in H^\lambda$, there exists an horizontal curve $\gamma$ from the origin to $p$ that is entirely contained in $H^\lambda$;  $\gamma$ is then contained in $Z^\lambda$ and hence it is abnormal by Proposition~\ref{polynomial}. We  deduce that 
\begin{equation}\label{Abn=H}
{\rm Abn}(e) = \bigcup_{\substack{\lambda \in \mathfrak g^* \setminus \{0\}\\\lambda_1=0}} H^\lambda.
\end{equation}

We are now ready to prove a key fact in the setting of two-step Carnot groups: every abnormal curve is not abnormal in some subgroup.
We first recall that a \emph{Carnot subgroup} in a Carnot group is a Lie subgroup generated by a subspace of the first layer.

\begin{lem}\label{existence:subgroup}
Let $G$ be a $2$-step Carnot group. For each abnormal   curve $\gamma$ in $G$,
 there exists a proper Carnot subgroup $G'$ of $G$ containing $\gamma$, in which $\gamma$ is a non-abnormal horizontal curve.
\end{lem}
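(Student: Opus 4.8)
The plan is to take an abnormal curve $\gamma$ in $G$, use the characterization \eqref{Abn=H} to find a covector $\lambda\in\g^*\setminus\{0\}$ with $\lambda_1=0$ such that $\gamma$ is contained in $H^\lambda$, and then argue that $G':=H^\lambda$ is the desired Carnot subgroup. By construction $H^\lambda$ is a Carnot subgroup: it is generated by the subspace $\mathfrak z^\lambda\cap V_1$ of the first layer, with second layer $[\mathfrak z^\lambda\cap V_1,\mathfrak z^\lambda\cap V_1]$. So the two things left to check are (a) that $G'$ is a \emph{proper} subgroup, and (b) that $\gamma$, viewed as a horizontal curve in $G'$ with respect to its first layer $V':=\mathfrak z^\lambda\cap V_1$, is \emph{not} abnormal in $G'$.

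\textbf{Properness.} For (a), suppose toward a contradiction that $H^\lambda=G$. Then $\mathfrak z^\lambda\cap V_1=V_1$, i.e., $\lambda_2([X,Y])=0$ for all $X\in V_1$ and all $Y\in V_1$; since $[V_1,V_1]=V_2$ this forces $\lambda_2=0$, hence $\lambda=\lambda_1+\lambda_2=0$ (recall $\lambda_1=0$), contradicting $\lambda\neq 0$. So $H^\lambda\subsetneq G$.

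\textbf{Non-abnormality in $G'$.} For (b), this is the heart of the matter. Write $\g'=\Lie(G')=V'\oplus[V',V']$ with $V'=\mathfrak z^\lambda\cap V_1$. By Proposition~\ref{primo-lemma} (applied inside $G'$), $\gamma$ is abnormal in $G'$ if and only if there is a proper subspace of $\g'$ containing $\Ad^{G'}_{\gamma(t)}V'$ for all $t$; equivalently, by Corollary~\ref{right_invariant_interpretation}, if and only if there is $\mu\in(\g')^*\setminus\{0\}$ with $\mu(\Ad^{G'}_{\gamma(t)}V')=0$ for all $t$. The key point is that the restriction of $w(\lambda)$ to $V'=\mathfrak z^\lambda\cap V_1$ is, by the very definition of $\mathfrak z^\lambda$, \emph{trivial}: $\lambda_2([X,Y])=0$ for all $X,Y\in V'$. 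Hence the second layer $[V',V']$ of $\g'$ lies in $\ker(\mu_2)$ for $\mu_2:=\lambda_2|_{[V',V']}$... wait, this only shows $\lambda$ restricted to $\g'$ is problematic, not that $\gamma$ is non-abnormal. Let me restate: the correct route is to observe that $V'$ is chosen \emph{maximal} among subspaces $W\subseteq V_1$ on which $w(\lambda)$ vanishes that still contain the ``direction'' of $\gamma$ — more precisely, one should take $\lambda$ so that $\mathfrak z^\lambda\cap V_1$ is as small as possible subject to still containing $\gamma$, equivalently so that $w(\lambda)$ restricted to a complement is nondegenerate. Concretely: among all $\lambda$ with $\lambda_1=0$ and $\gamma\subseteq H^\lambda$, choose one for which $\dim(\mathfrak z^\lambda\cap V_1)$ is minimal. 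Then I claim that inside $G'=H^\lambda$, the distribution $V'$ is such that $\gamma$ fails condition (2) of Corollary~\ref{right_invariant_interpretation}: if there were $\mu\in(\g')^*\setminus\{0\}$ annihilating all $\Ad^{G'}_{\gamma(t)}V'$, one extends $\mu$ to $\g$ (using a complement of $\g'$) and checks that $\mu$ has trivial first component on $V'$; combining $\mu$ with $\lambda$ appropriately produces a covector $\lambda'$ with $\lambda'_1=0$, $\gamma\subseteq H^{\lambda'}$, and $\dim(\mathfrak z^{\lambda'}\cap V_1)<\dim(\mathfrak z^\lambda\cap V_1)$, contradicting minimality.

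\textbf{Main obstacle.} The routine parts (that $H^\lambda$ is a Carnot subgroup, that it is proper, that a horizontal curve in $Z^\lambda$ lands in $H^\lambda$) all follow from the computations already displayed before \eqref{Abn=H}. The delicate step is (b): making precise the minimality/maximality argument that upgrades ``$\gamma$ lies in the abnormal variety $Z^\lambda$'' to ``$\gamma$ is genuinely non-abnormal in the smaller group $H^\lambda$.'' The bookkeeping is a linear-algebra argument on how the skew form $w(\lambda)$ and the adjoint orbits $\Ad_{\gamma(t)}V_1$ interact, and one must be careful that restricting to $\g'=V'\oplus[V',V']$ does not introduce spurious covectors — i.e., that every $\mu\in(\g')^*$ annihilating the $G'$-adjoint orbit of $V'$ genuinely comes from a $\g$-covector of the same type. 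I expect this to be the only step requiring real care; everything else is formal.
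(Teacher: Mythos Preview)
Your setup and the properness argument (a) match the paper exactly. The difference is in step (b).

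The paper does not attempt a one-shot minimality argument. Instead it simply \emph{iterates}: if $\gamma$ is still abnormal in $H^\lambda$, note that $H^\lambda$ is itself a Carnot group of step at most $2$, so the entire discussion preceding \eqref{Abn=H} applies verbatim inside $H^\lambda$; one obtains a proper Carnot subgroup $H^{\lambda,\mu}\subsetneq H^\lambda$ containing $\gamma$. Since the dimension of the first layer strictly drops at each stage, the process terminates after finitely many steps in a Carnot subgroup $G'$ of $G$ in which $\gamma$ is not abnormal. A Carnot subgroup of a Carnot subgroup of $G$ is again a Carnot subgroup of $G$, so this $G'$ works.

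Your minimality route may be salvageable, but the step you flag as ``the only step requiring real care'' is a genuine gap as written. The problem is exactly the one you identify: given $\mu\in(\g')^*$ witnessing abnormality of $\gamma$ in $G'$, you need to produce $\lambda'\in\g^*$ with $\lambda'_1=0$, $\gamma\subset Z^{\lambda'}$, and $\mathfrak z^{\lambda'}\cap V_1\subsetneq V'$. Extending $\mu$ by zero on a complement of $\g'$ does not obviously achieve this, because for $X\in V_1\setminus V'$ and $a\in V'$ the bracket $[a,X]$ need not lie in $[V',V']$, so $\tilde\mu(\Ad_{\gamma(t)}X)$ is not controlled; and there is no evident reason why $\ker w(\lambda+t\tilde\mu)$ should sit inside $V'$. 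The iteration argument bypasses this bookkeeping entirely and is both shorter and cleaner.
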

\begin{proof}
Let $\gamma$ be an abnormal   curve in $G$.
Then there exists 
$\lambda \in \mathfrak g^* \setminus \{0\}$, with $ \lambda_1=0$, such that
 $\gamma\subset H^\lambda$,
where $H^\lambda$ is the subgroup defined in \eqref{H:lambda}. By construction $H^\lambda$ is a Carnot subgroup. Since $\lambda\neq 0$ then $H^\lambda$ is a proper subgroup (of step $\leq 2$).

  If $\gamma$ is again abnormal in $H^\lambda$, then we iterate this process. 
Since dimension decreases, after finitely many steps one reaches a proper Carnot subgroup $G'$ in which $\gamma$ is not abnormal.  
\end{proof}

\subsection{Parametrizing abnormal varieties within free two-step Carnot groups}

Let $G$ be a free-nilpotent 2-step Carnot group. Let $m\leq r:=\mathrm{dim}(V_1)$. Fix a $m$-dimensional vector subspace $W'_m\subset V_1$. Denote by $G_m$ the subgroup generated by $W'_m$, and $X_m=GL(r,\R)\times G_m$, equipped with the left-invariant distribution given at the origin by 
$W_m:=\{0\}\oplus W'_m$. Observe that $GL(r,\R)$ acts on $G$ by graded automorphisms. Let 
\begin{eqnarray*}
\Phi_m:X_m\to G,\quad (g,h)\mapsto g(h).
\end{eqnarray*}
In a polarized group $(X,V)$, given a submanifold $Y\subset X$, 
 the
{\em endpoint map relative to} $Y$ is  ${\End}^Y: Y\times L^2([0,1],V)\to X$, $(y, u)\mapsto \gamma^{(y)}_u(1)$, 
where $\gamma^{(y)}_u$ satisfies \eqref{ODE} with $\gamma^{(y)}_u(0)=y$.
We say that a horizontal curve $\gamma$ with control $u$ is \emph{non-singular relative to $Y$} if the differential at $(\gamma(0),u)$ of the endpoint map relative to $Y$
is onto. 

\begin{lem}\label{4325526360}
Let $G$ be a free 2-step Carnot group. For every abnormal curve $\gamma$ in $G$, there exists an integer $m<r$ and a horizontal curve $\sigma$ in $X_m$ such that $\Phi_m(\sigma)=\gamma$, and $\sigma$ is non-singular relative to $\Phi_m^{-1}(e)$.
\end{lem}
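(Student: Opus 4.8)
The idea is to combine the structure theory developed in Section~\ref{facts2step} with the Borel-type rigidity of the $GL(r,\R)$-action. Given an abnormal curve $\gamma$ in $G$, I would first invoke Lemma~\ref{existence:subgroup} (or directly \eqref{Abn=H}) to pick $\lambda \in \g^*\setminus\{0\}$ with $\lambda_1 = 0$ such that $\gamma \subset H^\lambda$. Set $m := \dim(\z^\lambda \cap V_1)$; since $\lambda \neq 0$, the skew form $w(\lambda)$ on $V_1$ is nonzero, so its kernel $\z^\lambda \cap V_1$ has dimension at most $r-1$, i.e. $m < r$. Now $H^\lambda$ is the Carnot subgroup generated by the $m$-dimensional subspace $\z^\lambda \cap V_1 \subset V_1$. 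Because $G$ is \emph{free} of step $2$, any two $m$-dimensional subspaces of $V_1$ are carried one onto the other by a graded automorphism coming from $GL(r,\R)$; hence there is $g_0 \in GL(r,\R)$ with $g_0(W'_m) = \z^\lambda \cap V_1$, and therefore $g_0(G_m) = H^\lambda$ (the free-step-$2$ relations on the generators are preserved by graded automorphisms). Pull $\gamma$ back by $g_0$: the curve $\tilde\gamma := g_0^{-1}\circ\gamma$ is a horizontal curve in $G_m$ with control taking values in $W'_m$, and we define $\sigma(t) := (g_0, \tilde\gamma(t)) \in X_m = GL(r,\R)\times G_m$, which is horizontal for the distribution $W_m = \{0\}\oplus W'_m$ and satisfies $\Phi_m(\sigma(t)) = g_0(\tilde\gamma(t)) = \gamma(t)$.

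It remains to check that $\sigma$ is non-singular relative to $\Phi_m^{-1}(e)$, i.e. that the differential of $\End^{\Phi_m^{-1}(e)}$ at $(\sigma(0), u_\sigma)$ is onto $T_\gamma(1)G$. The point is that while $\gamma$ is abnormal \emph{inside $G_m$} (equivalently inside $H^\lambda$) it has at its disposal the extra directions coming from moving the $GL(r,\R)$-factor: differentiating $\Phi_m$ in the $GL(r,\R)$-direction at $g_0$ produces the tangent vectors $\frac{d}{ds}\big|_{0}(e^{sA}g_0)(\tilde\gamma(1)) = (A\cdot)_{*}\gamma(1)$ where $A$ ranges over $\mathfrak{gl}(r,\R)$ acting as a graded derivation of $\g$. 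So the image of $d\End^{\Phi_m^{-1}(e)}$ at $(\sigma(0), u_\sigma)$ contains
\begin{equation}\label{eq:sigma:image}
(dR_{\gamma(1)})_e\Big(\span\{\Ad_{\gamma(t)}V_m' : t\in[0,1]\}\Big) \ +\ \{D(\gamma(1)) : D \text{ graded derivation of }\g\text{ from }\mathfrak{gl}(r,\R)\},
\end{equation}
using Proposition~\ref{primo-lemma} (applied within $\Phi_m^{-1}(e)$, whose fiber over $e$ contributes nothing extra) for the first summand and the chain rule for $\Phi_m$ for the second. One then shows this sum is all of $T_{\gamma(1)}G$. The first summand, transported back by $g_0$, is $dR^{-1}$ of $\span\{\Ad_{\tilde\gamma(t)}W'_m\}$, which is the image of the endpoint differential of $\tilde\gamma$ inside $G_m$; since $\tilde\gamma$ need not be non-abnormal in $G_m$ this alone may be a proper subspace of $\Lie(G_m)$ and certainly misses directions outside $\Lie(H^\lambda)$. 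The role of the $GL(r,\R)$ summand is to fill in precisely the complement: graded derivations act transitively enough on the free step-$2$ algebra that, together with the abnormal-curve span, they generate everything. Here one uses that $\gamma$ is a genuine curve (not constant on any open interval), so $\span\{\Ad_{\tilde\gamma(t)}W'_m\}$ strictly contains $W'_m$ and in particular contains $[X(t),W'_m]$ for the velocity directions $X(t)$, which is how the $V_2$-part gets reached.

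\textbf{Main obstacle.} The delicate point is the surjectivity claim for \eqref{eq:sigma:image}: one must argue that for a free step-$2$ Carnot group the combined action of the graded derivations in $\mathfrak{gl}(r,\R)$ and of $\Ad$ along a non-constant horizontal curve span the full tangent space at $\gamma(1)$. I expect this to reduce to a linear-algebra statement about the free step-$2$ Lie algebra: writing $\g = V_1 \oplus \Lambda^2 V_1$, the span $\span\{\Ad_{\gamma(t)}W'_m\}$ always contains $W'_m$ and at least one bracket direction $[X,Y]$ with $X\in W'_m$, $Y\in V_1$, while $\mathfrak{gl}(r,\R)$ acts on $\Lambda^2 V_1$ as $\mathfrak{gl}(r)$ on $\Lambda^2\R^r$, whose orbit of any nonzero decomposable $2$-vector spans $\Lambda^2\R^r$. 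Assembling these pieces — and checking that passing to the subvariety $\Phi_m^{-1}(e)$ rather than all of $X_m$ does not destroy surjectivity (it does not, because the missing fiber directions are exactly the kernel directions of $d\Phi_m$ at points over $e$) — is the technical heart of the argument; everything else is the bookkeeping of Section~\ref{facts2step} together with the freeness hypothesis.
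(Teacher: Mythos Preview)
There is a genuine gap in two linked places. First, you misread what ``non-singular relative to $\Phi_m^{-1}(e)$'' requires: by the definition just before the lemma, $\End^Y$ maps $Y\times L^2([0,1],W_m)$ into $X_m$, not into $G$, so the differential must be onto $T_{\sigma(1)} X_m = T_{g_0}GL(r,\R)\oplus T_{\tilde\gamma(1)}G_m$, not onto $T_{\gamma(1)}G$. Once the target is corrected the image is straightforward to compute: varying the starting point in $\Phi_m^{-1}(e)=GL(r,\R)\times\{e\}$ with fixed control gives $T_{g_0}GL(r,\R)\times\{0\}$ (the $G_m$-component of the flow does not see the $GL$-factor), while varying the control with fixed starting point gives $\{0\}\times{\rm Im}(\dd\End^{G_m}_{u_{\tilde\gamma}})$. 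Hence $\sigma$ is non-singular relative to $\Phi_m^{-1}(e)$ \emph{if and only if} $\tilde\gamma$ is not abnormal in $G_m$. This is precisely the conclusion of Lemma~\ref{existence:subgroup}, which you cite but do not actually exploit: that lemma hands you a proper Carnot subgroup $G'$ in which $\gamma$ is \emph{non}-abnormal, and you must take $m=\dim(G'\cap V_1)$ for \emph{that} $G'$, not for an arbitrary $H^\lambda$ from \eqref{Abn=H}. With that choice the proof is immediate; without it, your $\sigma$ can very well be singular.

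Second, the surjectivity argument you sketch for your displayed sum cannot work even for its own (incorrect) target $T_{\gamma(1)}G$. Both summands already lie inside ${\rm Im}(\dd\End^G_{u_\gamma})$: the first because it uses controls valued in a subspace of $V_1$, so Proposition~\ref{primo-lemma} applies; the second because the graded derivations coming from $\mathfrak{gl}(r,\R)$ are contact vector fields vanishing at $e$, so \eqref{S1} puts each $D(\gamma(1))$ in the image. Since $\gamma$ is abnormal, ${\rm Im}(\dd\End^G_{u_\gamma})\subsetneq T_{\gamma(1)}G$, and your sum is necessarily a proper subspace. The $GL(r,\R)$-action cannot supply the tangent directions that abnormality says are missing.
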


\begin{proof}
Let $\gamma$ be an abnormal curve in $G$ starting at $e$, with control $u$. By Lemma~\ref{existence:subgroup}, $\gamma$ is contained in the Carnot subgroup $G'$ of $G$ generated by some subspace $V'_1\subset V_1$ and is not abnormal in $G'$. Let $m=\mathrm{dim}(V'_1)$. Then there exists $g\in GL(r,\R)$ such that $V'_1=g(W'_m)$, and thus $G'=g(G_m)$. Let $\sigma=(g,g^{-1}(\gamma))$. This is a horizontal curve in $X_m$. Consider the endpoint map on $X_m$ relative to
the submanifold $\Phi_m^{-1}(e)=GL(r,\R)\times \{e\}$. Since $\gamma$ is not abnormal in $G'$, the image $I$ of the differential at $((g,e),g^{-1}(u))$ of the endpoint map contains $\{0\}\oplus T_{g^{-1}(\gamma(1))}G_m$. Every curve of the form $t\mapsto(k,g^{-1}(\gamma(t)))$ with fixed $k\in GL(r,\R)$ is horizontal, so $I$ contains $T_{g}(GL(r,\R))\oplus\{0\}$. One concludes that $I=T_{(g,\gamma(1))}X_m$, i.e., $\sigma$ is non-singular relative to  $\Phi_m^{-1}(e)$. By construction, $\Phi_m(\sigma)=\gamma$.
\end{proof}

\subsection{Application to general 2-step Carnot groups}

\begin{proposition}\label{general2step}
Let $G$ be a 2-step Carnot group. There exists a proper algebraic set $\Sigma\subset G$ that contains all abnormal curves leaving from the origin.
\end{proposition}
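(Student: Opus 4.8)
The strategy is to use Lemma~\ref{4325526360} to pull every abnormal curve back, through one of finitely many ``unfolding'' maps $\Phi_m$, to a curve that is \emph{non-singular relative to a fixed submanifold}, and then deduce from the classical Sard theorem that the image of the corresponding relative endpoint map misses a set of full measure. First I would reduce to the free case: every $2$-step Carnot group $G$ of rank $r$ is a quotient $\pi:F\to G$ of the free-nilpotent $2$-step group $F$ of rank $r$ by a normal Carnot subgroup, and $\dd\pi_e$ maps $V_1(F)$ isomorphically onto $V_1(G)$; by Proposition~\ref{lift_abnormals} the preimage under $\pi$ of the abnormal set of $G$ is contained in the abnormal set of $F$, so it suffices to produce the proper algebraic set $\Sigma$ inside $F$ and push it forward (the image of an algebraic set under the algebraic map $\pi$ is contained in a proper algebraic set since $\pi$ is surjective with positive-dimensional fibres, or simply note $\Abn_G(e)\subseteq \pi(\Sigma_F)$ and take the Zariski closure). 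So assume $G=F$ is free.

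Next, fix $r=\dim V_1$ and range over the finitely many integers $m$ with $0\le m<r$. For each such $m$, consider the manifold $X_m=GL(r,\R)\times G_m$ with its relative endpoint map ${\End}^{Y_m}:Y_m\times L^2([0,1],W_m)\to G$ restricted along $Y_m=\Phi_m^{-1}(e)=GL(r,\R)\times\{e\}$, composed with $\Phi_m$; call this composite $F_m:Y_m\times L^2([0,1],W_m)\to G$. The key point is that $F_m$ is a smooth map whose source is a (Hilbert) manifold and whose target is finite-dimensional, and that by Lemma~\ref{4325526360} every abnormal curve $\gamma$ of $G$ arises as $\gamma=\Phi_m(\sigma)$ for some $m<r$ and some $\sigma$ that is a \emph{regular point of} $F_m$ (non-singular relative to $Y_m$ means precisely that $\dd(F_m)$ is onto at that point). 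Hence $\Abn(e)\subseteq\bigcup_{m<r}\{\text{regular values of }F_m\}$... no: rather, $\Abn(e)$ is contained in the union over $m$ of the set of \emph{regular values} would be wrong; what we get is that each $\gamma(1)\in\Abn(e)$ is in the image $F_m(\text{regular points})$, i.e.\ $\Abn(e)\subseteq\bigcup_{m<r}F_m(\mathrm{Reg}(F_m))$. That is not yet a measure-zero statement. The correct reading: being a regular \emph{point} does not bound the value; instead one must observe that $\dim(Y_m\times L^2)$ is ``too small in the relevant directions'' — this is where the genuine content lies, and it is handled exactly as in case~(1) of Theorem~\ref{main_thm1}: the map $\Phi_m$ itself, being induced by graded automorphisms acting on a \emph{proper} subgroup $G_m$, has image $\Phi_m(X_m)=\bigcup_{g\in GL(r,\R)}g(G_m)$, and the differential-of-endpoint formula shows that the full endpoint differential splits as the $GL$-part plus $(R_{\gamma(1)})_*$ applied to $\mathrm{Ad}_{\cdot}W_m$-span, which lands inside a subspace one must show is proper for generic $g$.

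So the real architecture is: (i) the abnormal set of $G$ is covered by finitely many sets of the form $A_m:=\{\,\gamma(1): \gamma=\Phi_m(\sigma),\ \sigma \text{ non-singular rel.\ } Y_m\,\}$; (ii) reparametrize — the non-singularity of $\sigma$ relative to $Y_m$ says that the \emph{reduced} map $\widehat{\End}:L^2([0,1],W_m)/(\text{reparametrizations and }GL\text{-orbits})\to (\text{target})$ is a submersion at $\sigma$, so $A_m$ is, locally, the image of a submersion whose source has dimension equal to $\dim G_m+\dim GL(r,\R)$; but what we actually want is that $A_m$ lies in a proper algebraic subvariety. I would get this by the following finite-dimensional model: consider the algebraic map $\Psi_m:GL(r,\R)\times G_m\to G$, $(g,h)\mapsto g(h)$; its image is $\bigcup_g g(G_m)$, which for $m<r$ is a constructible set of dimension $\le r^2+\dim G_m<\dim G$ (this strict inequality is the numerology one must verify: $\dim G=r+\binom r2$ while $\dim G_m=m+\binom m2$, and $r^2+m+\binom m2<r+\binom r2$ fails in general!). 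Because of that failure, one cannot use $\Phi_m$ directly; the non-singularity relative to $Y_m$ in Lemma~\ref{4325526360} is essential, and the argument must instead be the one sketched for case~(1): the relative endpoint map ${\End}^{Y_m}$ restricted to $Y_m\times L^2([0,1],W_m)$ has the property that its image, along the subset where it is a submersion composed with $\Phi_m$, is still covered by applying $GL(r,\R)$ to the \emph{abnormal set of the proper group $G_m$}, and one closes the loop by induction on $m$: $\Abn_G(e)\subseteq\bigcup_{m<r} GL(r,\R)\cdot\Abn_{G_m}(e)$, each $\Abn_{G_m}(e)$ is contained in a proper algebraic $\Sigma_m\subset G_m$ by the inductive hypothesis, $GL(r,\R)\cdot\Sigma_m$ is then contained in a proper algebraic subvariety of $G$ (algebraicity: it is the image of $GL(r,\R)\times\Sigma_m$ under the algebraic map $\Psi_m$, whose image closure is a proper subvariety since $GL(r,\R)\times\Sigma_m$ has dimension $r^2+\dim\Sigma_m<r^2+\dim G_m$, and — crucially — $\Psi_m$ factors through the $GL$-action so it is not dominant onto $G$: its image is contained in the locus of points lying in \emph{some} conjugate of the proper subgroup $G_m$, which is $\{g\in G:\exists\,\lambda\ne0,\ g\in H^\lambda\}$-type locus, a proper algebraic set by the $W^\lambda$-discussion of Remark~\ref{rem:W:linear}).

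\textbf{Main obstacle.} The crux — and the step I expect to fight with — is showing that $\bigcup_{g\in GL(r,\R)} g(\Sigma_m)$ (equivalently $\bigcup_{m<r} GL(r,\R)\cdot G_m$) is contained in a \emph{proper} algebraic subvariety of $G$, i.e.\ that these unfoldings do not sweep out all of $G$. Dimension counting alone does not settle it because $r^2$ can exceed the codimension; the resolution must be that the image is not just dimensionally small but genuinely \emph{constrained}: a point in $g(G_m)$ satisfies the polynomial equations $P^\lambda_i\equiv 0$ on $V_1$ (in the sense of \eqref{abnormal_variety}) for the $\lambda$ annihilating $g([V_1',V_1'])^\perp$, so $\bigcup_{g,m} g(G_m)\subseteq\bigcup_{\lambda\ne0,\ e\in Z^\lambda}Z^\lambda$, and the right-hand side — a union of proper abnormal varieties — must be shown to lie in a single proper algebraic subvariety. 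That last statement is essentially the assertion of the proposition restated via \eqref{Abn=H}, so in fact the honest proof is: combine \eqref{Abn=H} with an explicit polynomial, namely the requirement that the Plücker-type coordinates of the subspace $\mathrm{span}\{\Ad_{\gamma(t)}V_1\}$ fail to be full rank is cut out by the vanishing of all $(\dim\g)\times(\dim\g)$ minors of a matrix of extremal polynomials $P^{e_m^*}_i$, and one exhibits at least one such minor that is not identically zero on $G$ — that non-vanishing (genericity) statement is the real work, and it is exactly what the classical Sard theorem applied to the finite-dimensional reduction $\Phi_m\circ{\End}^{Y_m}$ furnishes for free, which is presumably how the authors finesse it.
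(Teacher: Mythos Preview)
Your proposal circles around the right objects but never lands on the one step that makes the whole argument work. The paper's proof is short because of a single clean observation you are missing: the endpoint of the abnormal curve is a \emph{singular value of the finite-dimensional map} $f\circ\Phi_m:X_m\to G$ (where $f:\tilde G\to G$ is the free cover). The logic is a chain-rule syllogism. One has the factorization
\[
f\circ\Phi_m\circ \End^{Y}=\End\circ(\text{a map into }L^2([0,1],V_1)),
\]
with $\End$ the endpoint map of $G$. At the control of $\sigma$, the differential of $\End^{Y}$ is onto (that is exactly what ``non-singular relative to $Y$'' means), whereas the differential of the right-hand side is \emph{not} onto, since $\gamma$ is abnormal in $G$. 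Hence $\dd(f\circ\Phi_m)$ fails to be onto at $\sigma(1)$, i.e., $\gamma(1)$ is a singular value of $f\circ\Phi_m$. Now classical Sard applies to the finite-dimensional algebraic map $f\circ\Phi_m$, the union over $m<r$ of the singular-value sets has measure zero, and Tarski--Seidenberg upgrades this to containment in a proper algebraic set.

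You recognized that $\Abn(e)\subseteq\bigcup_m F_m(\mathrm{Reg}(F_m))$ is useless by itself, and then went looking for substitutes: induction on $m$, dimension counts on $GL(r,\R)\cdot G_m$, explicit minors of the matrix of $P^{e_m^*}_i$. None of these work as stated (you correctly noted the dimension count fails), and the induction is circular without an independent base. In your last sentence you almost get there, but ``classical Sard applied to $\Phi_m\circ\End^{Y_m}$'' is wrong: that composite has infinite-dimensional source. Sard is applied to $f\circ\Phi_m$ alone; the role of $\End^{Y_m}$ and its surjective differential is purely to force the singular-value conclusion for $f\circ\Phi_m$.
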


\begin{proof}
Let $f:\tilde{G}\to G$ be a surjective homomorphism from a free 2-step Carnot group of the same rank as $G$. Let $\gamma$ be an abnormal curve leaving from the origin in $G$. It has a (unique) horizontal lift $\tilde{\gamma}$ in $\tilde{G}$ leaving from the origin. According to Lemma~\ref{4325526360}, there exists an integer $m$ and a non-singular (relative to $\Phi_m^{-1}(e)$) horizontal curve $\sigma$ in $X_m$ such that $\Phi_m(\sigma)=\tilde{\gamma}$, i.e., $f\circ\Phi_m(\sigma)=\gamma$.
Namely, there exists $g\in GL(m, \R)$ such that $\sigma(t)=(g, g^{-1} \tilde \gamma(t))$.
 Consider the endpoint map ${\End}^Y$ on $X_m$ 
relative to  the submanifold $Y:=\Phi_m^{-1}(e)$. 
Let us explain informally the idea of the conclusion of the proof. The composition 
 $f\circ\Phi_m \circ {\End}^Y$ is an  endpoint map for $G$,
  with starting point at the identity $e$.
  Hence, since the differential of ${\End}^Y$ at the control of $ {\sigma} $
   is onto,
    but 
     the differential of
     $f\circ\Phi_m\circ  {\End}^Y$ is not,
     the point  $\gamma(1)$ is a singular value of $f\circ\Phi_m$.
     Hence, we will conclude using  Sard's theorem.
     
     Let us now give a more formal proof of the last claims.
     Consider the map $\phi_m: Y\times L^2([0,1], W_m) \to L^2([0,1], V_1)$, defined as $(\phi_m(g,u))(t):= g(u(t)) \in V_1\subseteq T_e\tilde G$, for $t\in [0,1]$. We then point out the equality
     \begin{equation}\label{eq:Berna}
     f\circ\Phi_m\circ  {\End}^Y = {\End} \circ f_*\circ \psi_m,
     \end{equation}
where $\End:  L^2([0,1], V_1)\to G$ is the endpoint map of $G$ and $f_*:
 L^2([0,1], V_1) \to L^2([0,1], V_1) $
is the map
\[ (f_* (u))(t) = (\dd f)_e (u(t))\in V_1    \subseteq T_e  G.\]
Since $\sigma $ is   abnormal, i.e., the differential $\dd \End_{u_\gamma}$ is not surjective, and the differential of $\End^Y$ at the point
$(g,u_\sigma)=(f_* \circ \psi_m)u_\gamma$ is surjective,
from \eqref{eq:Berna}
we deduce that $\gamma(1)=\End^Y(g,u_\sigma)$ is a singular value for $f\circ\Phi_m$.
By the classical Sard Theorem, 
      the set $\Sigma_m$ of singular values of $f\circ\Phi_m$ has measure $0$ in $G$. So has the union
$\tilde \Sigma :=\cup_{m=1}^{r-1} \Sigma_m$
 of these sets. 
 By 
Tarski-Seidenberg's theorem 
 \cite[Proposition 2.2.7]{BCR},  
$\tilde \Sigma$ is a semi-algebraic set,
since the map $f\circ\Phi_m$ is algebraic and the set of critical points of an algebraic map is an algebraic set.
 Moreover, from \cite[Proposition 2.8.2]{BCR} we have  
 that
this semi-algebraic set is contained in an algebraic set $ \Sigma$ of the same dimension.
 Since $\tilde \Sigma$ has measure zero, the set $ \Sigma$ is  a proper algebraic set.
\end{proof}

\begin{example} [Abnormal curves not lying in any proper subgroup] \label{rmk33}
Key to our proof was the property,  encoded in Equation \eqref{H:lambda}, that every abnormal curve is contained in
   a proper subgroup of $G$. This property typically  fails for Carnot groups of step greater than $2$.
   Gol\'e and Karidi \cite{Gole_Karidi} 
    constructed a Carnot group of step 4 and rank 2    for which this property fails: namely, there is an
       abnormal curve that is   not contained in 
    any proper subgroup of their group. 
    Further on in this paper  (Section~\ref{not lying in any subgroup}) we show that this property fails   for the    free 3-step  rank-3 Carnot group.
\end{example}

\subsection{Codimension bounds on free 2-step Carnot groups} 

In this section we prove Theorem~\ref{thm:free_step_two}; we will make extensive use of the result and notation of Section~\ref{facts2step}. In the sequel, we denote by $G$ a fixed free Carnot group of step 2 and by $r=\dim V_1$ its rank.

We  identify $G$ with its Lie algebra,  which   has the form $V \oplus \Lambda^2 V$
for $V=V_1 \cong \R^r$ a real vector space of dimension $r$. 
The Lie bracket is $[(v, \xi), (w, \eta) ] = (0, v \wedge w)$.
When we use the exponential map to identify the group with its Lie algebra,
the equation for a curve $(x(t), \xi(t))$ to be horizontal reads
\[
\dot x = u,\qquad\dot \xi = x \wedge u.
\]
If $W \subset V$ is a subspace, then the group it generates has the form 
$W \oplus \Lambda^2 W \subset V \oplus \Lambda^2 V$.


\subsection{Proof that ${\rm Abn}(e) $ is contained in a set of codimension $\geq 3$} 
\label{sec:contained:codimension}
We use the view point discussed in Section~\ref{facts2step} where we defined the sets  $\mathfrak z^\lambda$ and $H^\lambda$.
We first claim that
\begin{equation}\label{rankr-2}
\text{dim}\: \mathfrak z^\lambda \cap V = \text{dim}\: \{X\in V \;:\;   \lambda_2([X,Y])  = 0 \ \forall\:  Y\in V   \}\leq r-2,
\end{equation}
for any $\lambda \in \mathfrak g^* \setminus \{0\}$ such that $\lambda_1=0$. Indeed, since $\lambda_2\not=0$, the alternating 2-form $w(\lambda):(X,Y)\mapsto\lambda_2([X,Y])$ has rank at least 2.

Then, by \eqref{rankr-2}, each $\mathfrak z^\lambda \cap V$ is contained in some $W\subset V$ with $\dim(W) = r-2$, hence $H^\lambda  \subseteq W \oplus \Lambda^2 W$ and, by \eqref{Abn=H},
\[
{\rm Abn}(e) = \bigcup_{\substack{\lambda \in \mathfrak g^* \setminus \{0\}\\\lambda_1=0}} H^\lambda \subseteq \bigcup_{W\in\,Gr(r,r-2)} W \oplus \Lambda^2 W.
\]
In fact, the equality 
\begin{equation}\label{formulaAbne}
{\rm Abn}(e) =  \bigcup_{W\in\,Gr(r,r-2)} W \oplus \Lambda^2 W.
\end{equation}
holds: this is because every codimension 2 subspace $W\subset V$ is the kernel of a rank 2 skew-symmetric 2-form (the pull-back of a nonzero form on the 2-dimensional space $V/W$), and every such skew-symmetric form corresponds to a covector $\lambda_2\in V_2^*=\Lambda^2 V^*$.

We now notice that the Grassmannian $Gr(r,r-2)$ of $(r-2)$-dimensional planes in $V$ has dimension $2(r-2)$ and that each $W \oplus \Lambda^2 W$ is (isomorphic to) the free group $\mathbb F_{m,2}$ of rank $m=r-2$ and step $2$, i.e., 
\[
\dim( W \oplus \Lambda^2 W) = m+\dfrac{m(m-1)}{2}=\frac{(r-1)(r-2)}{2}.
\]
It follows that the set $\cup_{W\in\,Gr(r,r-2)} W \oplus \Lambda^2 W$ can be parametrized with a number of parameters not greater than 
\[
\dim \mathbb F_{m,2}+ 
\dim Gr(r,m)
=\dfrac{r(r+1)}{2}  - 3.
\]
Since dim $G=r(r+1)/2$, 
the codimension $3$ stated in
Theorem~\ref{thm:free_step_two} now follows from \eqref{formulaAbne}. 
\qed
\subsection{Proof that ${\rm Abn}(e) $ is a semialgebraic  set of codimension $\geq 3$}
\label{sec:contained:codimension2}
Let $k=\lfloor (r-2)/2\rfloor$ and let $W$ be a codimension 2 vector subspace of $V_1$. Every pair $(\xi,\eta)\in W\oplus\Lambda^2 W$ can be written as
\begin{eqnarray*}
\xi=\sum_{j=1}^{r-2}x_j \xi_j,\quad
\eta=\sum_{i=1}^{k}z_i\xi_{2i-1}\wedge\xi_{2i},
\end{eqnarray*}
for some $(r-2)$-uple of vectors (e.g., a basis) $(\xi_j)_{1\leq j\leq r-2}$ of $W$. Conversely, every pair $(\xi,\eta)\in\mathfrak{g}=V\oplus\Lambda^2V$ of this form belongs to $W\oplus\Lambda^2 W$ for some codimension 2 subspace $W$ of $V_1$. Therefore  
\begin{eqnarray*}
\bigcup_{W\in\,Gr(r,r-2)} W \oplus \Lambda^2 W
\end{eqnarray*}
is the projection on the first factor of the algebraic subset
\begin{eqnarray*}
\{(\xi,\eta,\xi_1,\ldots,\xi_{r-2},x_1,\ldots,x_{r-2},z_1,\ldots,z_{k})\,:\,\xi=\sum_{j=1}^{r-2}x_j \xi_j,\,
\eta=\sum_{i=1}^{k}z_i\xi_{2i-1}\wedge\xi_{2i}\}
\end{eqnarray*}
of $\mathfrak{g}\times V^{r-2}\times\R^{r-2}\times\R^{k}$. Since the exponential map is an algebraic isomorphism, $\Abn(e)=\bigcup_{W\in\,Gr(r,r-2)} W \oplus \Lambda^2 W$ is semi-algebraic, and it is contained in an algebraic set of the same codimension (see
\cite[Proposition 2.8.2]{BCR}).
\qed


In the rest of this section we proceed with the  more precise description of the set ${\rm Abn}(e)$, as described in   Theorem~\ref{thm:free_step_two}.

Each $\xi \in \Lambda^2 V$ can be viewed, by contraction, as
a linear skew symmetric  map $\xi: V^* \to V$. For example, if $\xi = v \wedge w$, then
this map sends $\alpha \in V^*$ to $\alpha(v) w-\alpha(w) v$.

\begin{definition} For $\xi \in \Lambda^2 V$ let ${\rm supp}(\xi) \subset V$ denote the image of $\xi$,
when $\xi$ is viewed as a linear map $V^* \to V$. For $(v, \xi) \in V \oplus \Lambda^2 V$ set
${\rm supp}(v, \xi) = \R v + {\rm supp}(\xi).$ 
Finally, set ${\rm rank}(v, \xi) = {\rm dim}({\rm supp}(v, \xi))$.
\end{definition} 

\begin{proposition} \label{prop:RM}
 If  $G$ is the free 2-step nilpotent group on $r$ generators then  \[\Abn(e) = \{ (v, \xi) :  {\rm rank}(v, \xi) \le r-2 \}.\]
\end{proposition}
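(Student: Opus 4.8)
The plan is to identify $\Abn(e)$ with $\bigcup_{W} W\oplus\Lambda^2 W$, the union taken over codimension-$2$ subspaces $W\subset V$, which we already know equals $\Abn(e)$ by \eqref{formulaAbne}, and then to check that this union is exactly the rank-$\le r-2$ locus. So the whole statement reduces to the purely linear-algebraic claim
\[
\bigcup_{W\in Gr(r,r-2)} (W\oplus \Lambda^2 W) = \{(v,\xi)\in V\oplus\Lambda^2 V : \operatorname{rank}(v,\xi)\le r-2\}.
\]

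For the inclusion $\subseteq$: if $(v,\xi)\in W\oplus\Lambda^2 W$ for some $W$ of dimension $r-2$, then $v\in W$, and $\operatorname{supp}(\xi)\subseteq W$ because, viewing $\xi\in\Lambda^2 W$ as a map $V^*\to V$, its image is contained in $W$ (a sum of terms $w_i\wedge w_i'$ with $w_i,w_i'\in W$ has image in $W$). Hence $\operatorname{supp}(v,\xi)=\R v+\operatorname{supp}(\xi)\subseteq W$, so $\operatorname{rank}(v,\xi)\le r-2$.

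For the reverse inclusion $\supseteq$: suppose $\operatorname{rank}(v,\xi)=\dim(\operatorname{supp}(v,\xi))=:k\le r-2$. Choose any subspace $W$ with $\operatorname{supp}(v,\xi)\subseteq W\subseteq V$ and $\dim W=r-2$ (possible since $k\le r-2\le r$). Then $v\in W$ is clear. For $\xi$, I need that $\xi\in\Lambda^2 W$, i.e. that $\xi$ lies in the image of the natural inclusion $\Lambda^2 W\hookrightarrow\Lambda^2 V$. This is the one point needing a short argument: a bivector $\xi$ lies in $\Lambda^2 W$ iff $\operatorname{supp}(\xi)\subseteq W$ (equivalently, iff $\alpha\lrcorner\,\xi\in W$ for every $\alpha\in V^*$, where $\lrcorner$ is contraction, or iff $\iota_\alpha\iota_\beta\xi=0$ whenever $\alpha$ or $\beta$ annihilates $W$). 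One way: decompose $V=W\oplus U$ with $\dim U=2$, so $\Lambda^2 V=\Lambda^2 W\oplus(W\otimes U)\oplus\Lambda^2 U$, and write $\xi=\xi_0+\xi_1+\xi_2$ accordingly; contracting with covectors vanishing on $W$ shows that if $\operatorname{supp}(\xi)\subseteq W$ then $\xi_1=\xi_2=0$, so $\xi=\xi_0\in\Lambda^2 W$. Since $\operatorname{supp}(\xi)\subseteq\operatorname{supp}(v,\xi)\subseteq W$, we get $\xi\in\Lambda^2 W$, hence $(v,\xi)\in W\oplus\Lambda^2 W$, completing the proof.

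I do not expect a serious obstacle here; the statement is essentially a repackaging of \eqref{formulaAbne} together with the elementary fact that a $2$-vector is decomposable into a basis of its support and therefore lies in $\Lambda^2$ of any subspace containing its support. The only mild subtlety is keeping straight the two descriptions of "$\operatorname{supp}(\xi)\subseteq W$" — as image of the contraction map, and as membership in $\Lambda^2 W$ — and verifying they agree, which is the $V=W\oplus U$ splitting computation above. One should also note for cleanliness that when $r-2<k$ is impossible by hypothesis and when $k<r-2$ the choice of $W$ is non-unique but any choice works, so no genericity issue arises.
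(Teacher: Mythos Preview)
Your proposal is correct and follows essentially the same approach as the paper: both start from \eqref{formulaAbne} and reduce to the linear-algebraic equivalence between $(v,\xi)\in W\oplus\Lambda^2 W$ for some codimension-$2$ subspace $W$ and $\operatorname{supp}(v,\xi)\subseteq W$. The paper is terser, asserting the converse direction as something ``one easily checks,'' whereas you supply the $V=W\oplus U$ splitting argument explicitly.
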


\begin{proof}
From  \eqref{formulaAbne} we can directly  derive the new characterization. Suppose that $W \subset V$ is any subspace  and     $(w, \xi) \in W \oplus \Lambda^2 W $.
Then clearly ${\rm supp}(w,\xi) \subset W$.   
  Conversely, if $(w, \xi)$ has support a subspace of   $W$, then 
 one easily checks that $(w, \xi) \in W \oplus \Lambda ^2 W$. Taking $W$ an arbitrary subspace of rank $r-2$
 the result follows.   
\end{proof}

 By combining  Proposition~\ref{prop:RM} with some linear algebra we will conclude the proof  of   Theorem~\ref{thm:free_step_two}.
This proof is independent of   Sections~\ref{sec:contained:codimension}
and~\ref{sec:contained:codimension2} and yields a different perspective on the abnormal set.



\begin{proof}[Proof of     Theorem~\ref{thm:free_step_two}]
Let $G$ be the free-nilpotent 2-step  group on $r$ generators.
First, we write the polynomials defining $\Abn(e)$, then we compute dimensions.
It is simpler to divide up into the case of even and odd rank $r$.
We will   consider the case of even rank   in detail and leave most of the
odd rank case up to the reader.   

The linear algebraic Darboux theorem  will prove useful for computations.
All bivectors have even rank. 
This theorem asserts that the   bivector $\xi \in \Lambda ^2 V$ has rank $2m$   if and only
if there exists $2m$ linearly independent vectors  $e_1, f_1, e_2 , f_2 , \ldots e_m, f_m$ in  $V$
such that $\xi = \Sigma_{i=1} ^m e_i \wedge f_i$.

Let us now specialize to the case where $r = {\rm dim}(V)$ is even.
Write
\[r =2s.\]
Using Darboux one checks that ${\rm rank}(0, \xi) \le r-2$
if and only if   $\xi^s = 0$ (written out in components,
$\xi$ is a skew-symmetric $2r \times 2r$ matrix and the vanishing
of $\xi^s$ is exactly the vanishing of the Pfaffian of this matrix). Now, if ${\rm rank}(0,\xi) = r-2$
and  ${\rm rank}(v, \xi) \le r-2$, it must be the case that $v \in {\rm supp}(\xi)$; equivalently, in the Darboux basis, $v = \Sigma_{i=1} ^m a_i e_i + \Sigma_{i=1} ^m b_i f_i $. 
It follows  in this case that $v \in {\rm supp}(\xi)$ if and only if   
$v \wedge \xi^{s-1} = 0$.   Now, if ${\rm rank}(0, \xi) < r-2$ then ${\rm rank}(0, \xi) \le r-4$
and so ${\rm rank}(v, \xi) \le r-3$ for any $v \in V$.  But  ${\rm rank}(0, \xi) < r-2$ if and only
if $\xi^{s-1} = 0$ in which case automatically $v \wedge \xi^{s-1} = 0$.

We have proven that in the case $r =2s$, the equations for $\Abn(e)$ 
are the polynomial equations $\xi^s =0$ and $v \wedge \xi ^{s-1} = 0$.

To compute dimension, we   stratify $\Abn(e)$ according to the rank of its elements.    
The dimensions of the strata are easily checked to decrease with decreasing rank, so that the dimension of $\Abn(e)$
equals the dimension of the largest stratum, the stratum consisting of  the  $(v, \xi)$ of even rank $r-2$. 
(The  Darboux theorem and a bit of work yields that   the  stratum  having rank $k$ with $k$ odd  consists of exactly one $Gl(V)$ orbit 
while the stratum having rank $k$ with $k$ even consists of   exactly  two  $Gl(V)$ orbits). 
A point $(v, \xi)$ is in  this stratum if and only if  $\xi^s = 0$ while  $\xi ^{s-1} \ne 0$ and $v \in {\rm supp}(\xi)$.
Let us put the condition on $v$ aside for the moment.  The first condition on $\xi$ is the 
Pfaffian equation which defines an algebraic hypersurface in $\Lambda^2 V$, the zero locus of the Pfaffian
of $\xi$.  The second equation for $\xi$ defines the smooth locus of the Pfaffian. Thus,
the set of $\xi$'s satisfying the first two equations has dimension $1$ less than that of $\Lambda^2 V$,
so its dimension is ${r \choose 2} -1$.
Now, on this smooth locus $\{Pf = 0 \}_{\rm smooth} \subset \{ Pf = 0 \}$ we have a well-defined algebraic map  $F: \{Pf = 0 \}_{\rm smooth} \to Gr(r, r-2)$
which sends $\xi$ to $F(\xi) = {\rm supp}(\xi)$.    Let  $U \to Gr(r, r-2)$  denote the canonical  rank $r-2$
vector bundle over the Grassmannian.  Thus $U \subset \R^r \times Gr(r, r-2)$ consists of pairs $(v,P)$ such that $v \in P$. 
 Then  $F^* U$ is a rank $r-2$ vector bundle over $\{Pf = 0 \}_{\rm smooth}$ consisting of pairs $(v, \xi) \in \R^2 \times \Lambda^2 V$
such that $v \in {\rm supp}(\xi)$ and $\xi$ has rank $r-2$.  In other words, the additional  condition $v \in {\rm supp}(\xi)$
says exactly that $(v, \xi) \in F^*U$.  It follows that the   dimension of this principle stratum is ${\rm dim}(F^* U) = ({r \choose 2} -1) + (r-2) = {\rm dim}(G) -3$.

Regarding the odd rank case 
\[r = 2 s+ 1\]
the same logic shows that the equations defining $\Abn(e)$
are $\xi^s = 0$ and involves no condition on $v$.  A well-known matrix computation \cite{Arnold_1971}
shows that the subvariety  $\{ \xi ^s = 0 \}$ in the odd rank case has codimension $3$. 
Since the map $V \oplus \Lambda ^2 V \to \Lambda ^2 V$
is a projection, and since $\Abn(e)$ is the inverse image of $\{ \xi ^s = 0 \} \subset \Lambda ^2 V$
under this projection, its image remains codimension $3$.
\end{proof}

Recall that the rank of $\xi\in \Lambda^2 V$ is the (even) dimension $d$ of its support.   For an open dense subset of elements of $  \Lambda ^2 V$,  the rank  is as large as possible: $r$ if $r$ is even and   $r-1$ if $r$ is odd.  We call {\em singular} the elements $\xi\in\Lambda ^2 V$ whose rank is less than the maximum and we write $(\Lambda^2 V)_{\rm sing}$ to denote the set of singular elements. From Proposition~\ref{prop:RM} we easily deduce the following.

\begin{proposition}
 The projection of  $\Abn(e)$   onto $\Lambda^2 V$
 coincides with  the singular elements  
$ (\Lambda^2 V)_{\rm sing} \subset \Lambda^2 V$. 
\end{proposition}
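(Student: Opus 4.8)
The statement to prove is that the image of $\Abn(e)$ under the projection $\pi\colon V\oplus\Lambda^2 V\to\Lambda^2 V$ equals $(\Lambda^2 V)_{\rm sing}$. The natural route is to combine the clean set-theoretic description of $\Abn(e)$ from Proposition~\ref{prop:RM} with the definition of ${\rm rank}$ and ${\rm supp}$ on bivectors. Concretely, $\pi(\Abn(e)) = \{\xi\in\Lambda^2 V : \exists\, v\in V \text{ with } {\rm rank}(v,\xi)\le r-2\}$, and I want to show this coincides with $\{\xi : {\rm rank}(0,\xi) < \max\}$, i.e.\ with those $\xi$ of non-maximal rank.

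\textbf{Step 1 (easy inclusion $\supseteq$).} Suppose $\xi\in(\Lambda^2 V)_{\rm sing}$, so $d:={\rm rank}(0,\xi)=\dim{\rm supp}(\xi)$ is strictly less than the maximal rank. Since $d$ is even, $d<r$ forces $d\le r-1$, and if $r$ is even then $d\le r-2$ while if $r$ is odd then $d\le r-3$ (because $d$ is even and $d<r-1$ is impossible only when $d=r-1$, which is excluded by singularity when $r$ is odd... here I should be careful: singular means $d$ less than the generic maximum, which is $r$ for $r$ even and $r-1$ for $r$ odd, so in both cases $d\le r-2$). Taking $v\in{\rm supp}(\xi)$ arbitrary, we get ${\rm supp}(v,\xi)={\rm supp}(\xi)$ has dimension $d\le r-2$, so $(v,\xi)\in\Abn(e)$ by Proposition~\ref{prop:RM}, hence $\xi\in\pi(\Abn(e))$.

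\textbf{Step 2 (reverse inclusion $\subseteq$).} Suppose $\xi=\pi(v,\xi)$ for some $(v,\xi)\in\Abn(e)$, i.e.\ ${\rm rank}(v,\xi)=\dim(\R v+{\rm supp}(\xi))\le r-2$. Since ${\rm supp}(\xi)\subseteq \R v+{\rm supp}(\xi)$, we get $\dim{\rm supp}(\xi)\le r-2$. As the maximal possible value of ${\rm rank}(0,\xi)$ is $r$ (for $r$ even) or $r-1$ (for $r$ odd), in either case $\dim{\rm supp}(\xi)\le r-2$ is strictly below the maximum, so $\xi$ is singular. This gives $\pi(\Abn(e))\subseteq(\Lambda^2 V)_{\rm sing}$ and completes the argument.

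\textbf{Main obstacle.} There is essentially no obstacle — the proposition is a direct unwinding of Proposition~\ref{prop:RM} together with the elementary observation that passing from $(0,\xi)$ to $(v,\xi)$ can only enlarge the support, so the rank condition on $(v,\xi)$ controls the rank of $\xi$ alone. The only point requiring a word of care is the parity bookkeeping: ${\rm rank}(0,\xi)$ is always even, so ``${\rm rank}(0,\xi)$ is non-maximal'' and ``${\rm rank}(0,\xi)\le r-2$'' are equivalent statements, which is what makes the two inclusions match up exactly. I would state this parity remark explicitly and then the two inclusions are each a single line.
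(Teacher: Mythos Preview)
Your proposal is correct and follows exactly the route the paper indicates: the paper simply states that the proposition is ``easily deduced'' from Proposition~\ref{prop:RM} and gives no further details, so your two-inclusion argument with the parity bookkeeping is precisely the intended (and only natural) justification. One tiny cosmetic point: in Step~1, when $\xi=0$ the support is $\{0\}$, so ``take $v\in{\rm supp}(\xi)$'' still works (with $v=0$), but you might prefer to simply take $v=0$ in all cases since ${\rm rank}(0,\xi)=\dim{\rm supp}(\xi)\le r-2$ already.
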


\begin{remark}
A consequence of the previous result is the fact that elements of the form $(0, \xi)$ where ${\rm rank}(\xi)$ is maximal  can never be reached by abnormal curves. Notice that such elements are in the center of the group.  
\end{remark}
 
To be more  precise about   $\Abn(e)$ we must divide into two cases according to the parity of $r$.

\begin{theorem} 
If $G=V\oplus\Lambda^2 V$ is a free Carnot group with odd rank $r$, then   $\Abn(e) = V \oplus (\Lambda ^2 V)_{\rm sing}$.
\end{theorem}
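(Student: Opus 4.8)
The plan is to combine Proposition~\ref{prop:RM}, which asserts $\Abn(e)=\{(v,\xi):{\rm rank}(v,\xi)\le r-2\}$, with the elementary linear algebra of bivectors in odd dimension. The key point is that when $r=\dim V$ is odd, the maximal rank of a bivector $\xi\in\Lambda^2 V$ is $r-1$, so ${\rm supp}(\xi)$ is always a proper subspace of $V$, and hence ${\rm supp}(\xi)$ has dimension at most $r-1$.

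First I would recall the definitions: ${\rm rank}(v,\xi)=\dim(\R v+{\rm supp}(\xi))$, and ${\rm rank}(\xi)={\rm rank}(0,\xi)=\dim({\rm supp}(\xi))$, which is always even by the linear algebraic Darboux theorem already invoked in the proof of Theorem~\ref{thm:free_step_two}. Since $r=2s+1$ is odd, ${\rm rank}(\xi)\le r-1=2s$ for every $\xi$, so $(\Lambda^2 V)_{\rm sing}=\{\xi:{\rm rank}(\xi)\le r-3\}$, i.e.\ the singular bivectors are exactly those of rank at most $r-3$ (one step below the generic value $r-1$).

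Next I would prove the two inclusions of $\Abn(e)=V\oplus(\Lambda^2 V)_{\rm sing}$. For $\supseteq$: if $\xi\in(\Lambda^2 V)_{\rm sing}$, then ${\rm rank}(\xi)\le r-3$, so for any $v\in V$ we get ${\rm rank}(v,\xi)\le{\rm rank}(\xi)+1\le r-2$, hence $(v,\xi)\in\Abn(e)$ by Proposition~\ref{prop:RM}. For $\subseteq$: suppose $(v,\xi)\in\Abn(e)$, so ${\rm rank}(v,\xi)\le r-2$; then in particular ${\rm rank}(\xi)\le{\rm rank}(v,\xi)\le r-2$, and since ${\rm rank}(\xi)$ is even and $r-2$ is odd, we conclude ${\rm rank}(\xi)\le r-3$, i.e.\ $\xi\in(\Lambda^2 V)_{\rm sing}$. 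Thus the projection condition on $\xi$ alone already captures membership in $\Abn(e)$, and there is no constraint on $v$ — which matches exactly the observation in the proof of Theorem~\ref{thm:free_step_two} that in odd rank the defining equations are $\xi^s=0$ with no condition on $v$.

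I do not expect any serious obstacle here: the only subtlety is the parity argument (an even integer bounded by the odd number $r-2$ must be at most $r-3$), which is what makes the odd case cleaner than the even case, where the extra condition $v\wedge\xi^{s-1}=0$ is genuinely needed. The proof is essentially a two-line corollary of Proposition~\ref{prop:RM} together with the evenness of bivector rank, so I would simply write it out directly rather than setting up any machinery.
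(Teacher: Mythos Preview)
Your proof is correct and follows exactly the approach the paper has in mind: the statement is presented there as an immediate consequence of Proposition~\ref{prop:RM}, and your two inclusions via the parity argument (even bivector rank bounded by the odd number $r-2$ forces rank $\le r-3$) spell out precisely the reasoning already sketched in the odd-rank paragraph of the proof of Theorem~\ref{thm:free_step_two}.
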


The previous result, as well as the following one, easily follows from Proposition~\ref{prop:RM}. To describe the situation for $r$ even, let us  write $(\Lambda ^2 V)_d$ for those elements of $\Lambda ^2 V$ whose rank is exactly $d$
and $(\Lambda ^2 V)_{<d}$ for those elements whose rank is strictly less than $d$.  

%
%
%

\begin{theorem} 
If $G=V\oplus\Lambda^2 V$ is a free Carnot group with even rank $r$, then $\Abn(e)$   is the union $Y\cup Y_1$ of the two quasiprojective subvarieties
\[
\begin{split}
& Y= \{ (v, \xi ) \in V \oplus \Lambda^2 V  : v \in {\rm supp}(\xi),  \xi \in (\Lambda ^2 V)_{r-2} \}\\
& Y_1 = V \times (\Lambda^2 V)_{< r-2}.
\end{split}
\]
In particular, $\Abn(e)$ is a singular  algebraic variety of codimension $3$.
\end{theorem}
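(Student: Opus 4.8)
The plan is to derive everything from Proposition~\ref{prop:RM}, which already tells us that $\Abn(e)=\{(v,\xi):\mathrm{rank}(v,\xi)\le r-2\}$, and from the linear-algebra description of $\mathrm{rank}(v,\xi)$ that was worked out inside the proof of Theorem~\ref{thm:free_step_two}. For even rank $r=2s$, the point is that the condition $\mathrm{rank}(v,\xi)\le r-2$ splits into exactly two mutually exclusive regimes according to the rank of the bivector part $\xi$: either $\mathrm{rank}(\xi)=r-2$ exactly, in which case the remaining constraint is $v\in\mathrm{supp}(\xi)$, or $\mathrm{rank}(\xi)<r-2$ (hence $\le r-4$), in which case $\mathrm{rank}(v,\xi)\le r-3$ automatically, with no constraint on $v$. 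The first regime is precisely the set $Y$ and the second is precisely $Y_1$, so $\Abn(e)=Y\cup Y_1$. This dichotomy is exactly the one already established in the displayed paragraph of the proof of Theorem~\ref{thm:free_step_two} (the discussion of $\xi^s=0$, $\xi^{s-1}\ne 0$, $v\wedge\xi^{s-1}=0$ versus $\xi^{s-1}=0$), so there is essentially nothing new to prove here beyond repackaging that case analysis.

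The steps I would carry out, in order, are: first, recall from Darboux's theorem that $\mathrm{rank}(\xi)$ is always even and that $v\in\mathrm{supp}(\xi)$ iff $v\wedge\xi^{(\mathrm{rank}\,\xi)/2-1}\ne 0$ fails, i.e.\ $v\wedge\xi^{s-1}=0$ when $\mathrm{rank}(\xi)=r-2=2(s-1)$; second, observe that when $\mathrm{rank}(\xi)=r-2$ one has $\mathrm{rank}(v,\xi)\le r-2$ iff $v\in\mathrm{supp}(\xi)$, giving the set $Y$; third, observe that when $\mathrm{rank}(\xi)\le r-4$ one automatically has $\mathrm{rank}(v,\xi)\le(r-4)+1=r-3\le r-2$ for every $v$, giving the set $Y_1$; fourth, note that $\mathrm{rank}(\xi)$ cannot be $r-3$ or $r-1$ (odd) or $r$ (would force $\mathrm{rank}(v,\xi)\ge r-1$), so these two regimes exhaust the locus $\mathrm{rank}(v,\xi)\le r-2$; hence $\Abn(e)=Y\cup Y_1$. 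For the quasiprojective and codimension claims: $(\Lambda^2 V)_{r-2}$ is locally closed (it is $\{Pf=0\}$ minus $\{\xi^{s-1}=0\}$, i.e.\ the smooth locus of the Pfaffian hypersurface), and $Y$ is the total space of the pullback of the tautological bundle under $\xi\mapsto\mathrm{supp}(\xi)$, so $Y$ is quasiprojective of dimension $\binom{r}{2}-1+(r-2)=\dim G-3$; $Y_1=V\times(\Lambda^2 V)_{<r-2}$ and the locus $(\Lambda^2 V)_{<r-2}=\{\xi^{s-1}=0\}$ has codimension at least $3$ in $\Lambda^2 V$ (a well-known fact about the stratification of bivectors by rank, cf.\ \cite{Arnold_1971}), so $\dim Y_1\le\dim G-3$. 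Therefore $\Abn(e)=Y\cup Y_1$ has codimension exactly $3$ (the bound $\ge 3$ from the two pieces, the bound $\le 3$ already from Theorem~\ref{thm:free_step_two} or from $\dim Y$), and it is a genuinely singular variety because the two components meet along $v\in\mathrm{supp}(\xi)$ with $\xi$ of rank $<r-2$, where the Pfaffian hypersurface itself is singular.

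The only genuinely nontrivial ingredient is the codimension-$3$ estimate for $\{\xi^{s-1}=0\}\subset\Lambda^2\R^{2s}$, i.e.\ the statement that the locus of bivectors of rank $\le r-4$ in even dimension has codimension $3$; this is the same matrix computation invoked (via \cite{Arnold_1971}) in the proof of Theorem~\ref{thm:free_step_two} for the odd-rank Pfaffian-power locus, and it can be cited rather than reproved. Everything else is bookkeeping with the Darboux normal form and the already-established Proposition~\ref{prop:RM}. So I expect the proof to be short: state the dichotomy, identify the two pieces with $Y$ and $Y_1$, and read off dimensions from the bundle description of $Y$ and the cited codimension bound for $(\Lambda^2 V)_{<r-2}$.
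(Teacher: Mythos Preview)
Your proposal is correct and follows essentially the same approach as the paper: the paper states that this theorem ``easily follows from Proposition~\ref{prop:RM}'' (the detailed case analysis having already been carried out in the proof of Theorem~\ref{thm:free_step_two}), and you simply unpack that implication exactly as the paper intends. One small sharpening: in even dimension $r=2s$ the locus $(\Lambda^2 V)_{<r-2}=\{\xi^{s-1}=0\}$ actually has codimension $\binom{4}{2}=6$ in $\Lambda^2 V$, so $Y_1$ has codimension $6$ in $G$ and is strictly lower-dimensional than $Y$; your ``at least $3$'' is correct but undersells the bound.
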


We observe  that $Y_1 = \bar Y \setminus Y$.  

\begin{remark}
Given any $g = (v, \xi) \in G$ we can define its {\em singular rank} 
to be the minimum of the dimensions of the image of the differential of the endpoint map  $\dd\End(\gamma)$,
where the minimum is taken over all $\gamma$ that connect $0$ to $g$. 
Thus, the singular rank of  $g= 0$ is $r$ and is  realized by the constant curve, while if $\xi$ is generic then the singular rank of $g= (0,\xi)$ is ${\rm dim}(G)$,  which means that  every horizontal curve connecting $0$ to $g$ is not abnormal.

It can be easily proved that, if $r$ is even and  $v \in {\rm supp}(\xi)$, then the singular rank of $g$ is
just ${\rm rank}(\xi)$. In this case we take a $\lambda$ with ${\rm ker}(\lambda) = {\rm supp}(\xi)$ and realize
$g$ by any horizontal curve lying inside $G(\lambda)$.
\end{remark}

\section{Sufficient condition for Sard's property}   	\label{sec:sufficient}
In Section~\ref{EndpointMap} we observed that, given a polarized group $(G,V)$ and a horizontal curve $\gamma$ such that $\gamma(0)=e$ and with control $u$, the space
$
(\dd R_{\gamma(1)})_eV+(\dd L_{\gamma(1)})_e V +\mathcal S(\gamma(1))
$
is a subset of $ {\rm Im}(\dd \End_u)\subset T_{\gamma(1)}G$. Therefore, if $g\in G$ is such that
\begin{equation}\label{Eq_on_e}
\Ad_{g^{-1}} V + V + (\dd L_g)^{-1}\mathcal X(g) = \g,
\end{equation}
for some subset $\mathcal X$ of $\mathcal S$, then $g$ is not a singular value of the endpoint map. Here we denoted with $\mathcal X(g)$ the space of vector fields in $\mathcal X$ evaluated at $g$.
 In particular, if the equation above is of polynomial type (resp.~analytic), then $(G,V)$ has the Algebraic (resp.~Analytic) Sard Property.

In the following we embed both sides of \eqref{Eq_on_e} in a larger Lie algebra $\tilde{\g}$, and we find conditions on $\tilde{\g}$ 
that are sufficient for \eqref{Eq_on_e} to hold. 
The idea is to consider a group $\tilde G$ that acts, locally, on $G$ via contact mappings, that is, diffeomorphisms that preserve the left-invariant subbundle $\Delta$. It turns out that the Lie algebra $\tilde{\g}$ of $\tilde G$, viewed as algebra of  left-invariant vector fields on $\tilde G$, represents a space of contact vector fields of $G$.

\subsection{Algebraic prolongation}\label{algebraic_prolongation}
Let  $\tilde G$ be a Lie group and  $G$ and $H$ two  subgroups.
Denote by $\tilde \g$, $\g$, and  $\h$ the respective Lie algebras seen as tangent spaces at the identity elements.
We shall assume that $H$ is closed.
Suppose that 
$\tilde \g = \h \oplus \g$ and that 
we are given the decompositions in vector space direct sum 
\[\mathfrak h  = V_{-h}\oplus \cdots\oplus V_{0} \]
and
\[\g= V_{1}\oplus \cdots\oplus V_{s} \]
in such a way that $\tilde \g$ is graded, 
 namely  $[V_i, V_j]\subseteq V_{i+j}$, for $i,j=-h,\ldots, s$,
and   
  $\g$ is stratified, i.e., $[V_1, V_j]=V_{j+1}$ for $j>0$.
In other words, 
$\tilde \g$ is  a (finite-dimensional)
prolongation of the Carnot 
 algebra $\g$.

 


We have a local embedding of
$G$ within the quotient space
$\tilde G/H:= \{gH: g\in G\}$ via 
 the restriction to $G$ of the projection
  \begin{eqnarray*}
 \pi:\tilde G  &\to &\tilde G/H\\
 p&\mapsto& \pi(p):=[p]:=pH.
 \end{eqnarray*}
The group 
$\tilde G$ acts on $\tilde G/H$ on the left:
\begin{eqnarray*}
 \bar L_{\tilde g} :\tilde G/H &\to &\tilde G/H\\
gH&\mapsto& \bar L_{\tilde g}(gH):= \tilde g g H.
\end{eqnarray*}
We will repeatedly use     the  identity
\begin{equation}
\label{Lpi_piL}
\bar L_{\tilde g} \circ\pi   = \pi  \circ L_{{\tilde g} }  .    \end{equation}

On the groups $\tilde G$ and $G$ we consider the two left-invariant subbundles $\tilde\Delta$ and $\Delta$
 that, respectively, are defined by
 \begin{eqnarray*}	
\tilde \Delta_e&:=& \mathfrak h + V_1,\\
 \Delta_e&:=&  V_1.
 \end{eqnarray*}	
 Notice that both subbundles are bracket generating $\tilde\g$ and $\g$, respectively.
 Moreover, $\tilde \Delta$ is $\ad_{\mathfrak h}$-invariant, hence it passes to the quotient as a $\tilde G$-invariant subbundle $\bar \Delta$ on 
 $\tilde G/H $.
 Namely, there exists a subbundle $\bar \Delta$ of the tangent bundle of 
 $\tilde G/H $ such that
 \[\bar \Delta= {\rm d} \pi (\tilde \Delta).\]

 \begin{lemma}\label{contacto:morphism}
 The map
  \begin{eqnarray*}
 i:=\pi_{|_G}  : (G,\Delta)  &\to &(\tilde G/H, \bar \Delta)\\
 g&\mapsto& gH
 \end{eqnarray*}
 is a local diffeomorphism and preserves the subbundles, i.e., it is locally a contacto-morphism. 
 \end{lemma}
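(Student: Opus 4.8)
The plan is to verify two things separately: that $i = \pi|_G$ is a local diffeomorphism near the identity coset, and that it carries $\Delta$ into $\bar\Delta$.

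\textbf{Local diffeomorphism.} First I would compute the differential $(\dd i)_e = (\dd\pi)_e|_{\g}$. Since $H$ is a closed subgroup, $\tilde G/H$ is a smooth manifold of dimension $\dim\tilde\g - \dim\h = \dim\g$, and $(\dd\pi)_e : \tilde\g \to T_{[e]}(\tilde G/H)$ is surjective with kernel exactly $\h$ (this is the standard description of the tangent space to a homogeneous space). The hypothesis $\tilde\g = \h\oplus\g$ then forces $(\dd\pi)_e|_{\g}$ to be a linear isomorphism onto $T_{[e]}(\tilde G/H)$. Hence $i$ is a local diffeomorphism at $e$ by the inverse function theorem. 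To upgrade this to every point $g\in G$, I would use equivariance: by \eqref{Lpi_piL} we have $i\circ L_g = \bar L_g\circ i$ on $G$ (both sides send $g' \mapsto gg'H$), and since $L_g$ and $\bar L_g$ are diffeomorphisms of $G$ and $\tilde G/H$ respectively, $i$ is a local diffeomorphism at $g$ iff it is at $e$. So $i$ is a local diffeomorphism everywhere.

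\textbf{Preservation of the subbundles.} By left-invariance of $\Delta$ and $\tilde G$-invariance of $\bar\Delta$, together with the equivariance $i\circ L_g = \bar L_g \circ i$, it suffices to check $(\dd i)_e(\Delta_e) \subseteq \bar\Delta_{[e]}$, and since $(\dd i)_e$ is injective and both spaces will turn out to have the same dimension, we get equality. Now $\Delta_e = V_1$ and $\bar\Delta_{[e]} = (\dd\pi)_e(\tilde\Delta_e) = (\dd\pi)_e(\h + V_1)$. Since $(\dd\pi)_e$ kills $\h$, we get $\bar\Delta_{[e]} = (\dd\pi)_e(V_1) = (\dd i)_e(V_1) = (\dd i)_e(\Delta_e)$, so in fact $(\dd i)_e$ maps $\Delta_e$ onto $\bar\Delta_{[e]}$. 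Transporting by the equivariance identity, $(\dd i)_g(\Delta_g) = \bar\Delta_{i(g)}$ for all $g$. Thus $i$ is locally a contacto-morphism.

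\textbf{Main obstacle.} There is no serious obstacle here; the statement is essentially a bookkeeping exercise with the standard facts about homogeneous spaces. The only point requiring a little care is making the equivariance argument rigorous — i.e., being precise that \eqref{Lpi_piL} restricts correctly to $G$ and that left translations are honest diffeomorphisms — and making sure the dimension count $\dim\bar\Delta = \dim(\h+V_1) - \dim\h = \dim V_1 = \dim\Delta$ is stated explicitly so that ``maps into'' can be promoted to ``is an isomorphism onto.'' I would also remark that the well-definedness of $\bar\Delta$ as a genuine subbundle (already asserted in the text just before the lemma using $\ad_{\h}$-invariance of $\tilde\Delta$) is what guarantees $\dd\pi(\tilde\Delta)$ has constant rank, which is needed for $\bar\Delta$ to be a subbundle in the first place.
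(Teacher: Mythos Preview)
Your proof is correct and follows essentially the same approach as the paper: both use the complementarity $\tilde\g = \h\oplus\g$ to get that $(\dd i)_e$ is an isomorphism, then invoke the $G$-equivariance \eqref{Lpi_piL} to propagate this to all $g\in G$, and both verify $(\dd i)(\Delta)\subseteq\bar\Delta$ using $\Delta_e=V_1\subset\h+V_1=\tilde\Delta_e$. The only cosmetic difference is that the paper checks the subbundle inclusion directly at an arbitrary $g$ via $(\dd i)_g X_g = \left.\tfrac{\dd}{\dd t}[g\exp(tX_e)]\right|_{t=0}\in\bar\Delta_{[g]}$, whereas you first reduce to $g=e$ by equivariance and then observe equality of dimensions; both are straightforward.
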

 \begin{proof}
 Since $\g$ is a complementary subspace of $\mathfrak h$ in $\tilde \g$,  the differential
 $({\rm d} i)_e$ is an isomorphism between 
 $\g$ and 
 $T_{[e]} \tilde G/H$.
Since by Equation \eqref{Lpi_piL} the map $\pi$ is $G$-equivariant, then  
 $({\rm d} i)_g$ is an isomorphism for any arbitrary  $g\in G$. 
Hence, the map $i$ is a local diffeomorphism.
If $X$ is a left-invariant section of   $ \Delta$ then 
 \[({\rm d} i)_g X_g = 
 \left. \dfrac{{\rm d}} {{\rm d} t} [g \exp(t X_e)]\right|_{t=0}\in \bar \Delta_{[g]},\]
since $X_e\in V_1$.
 \end{proof}

  Let $\pi_\g:\tilde \g = V_{-h}\oplus \cdots\oplus V_{0} \oplus \g \to \g$ be the projection induced by the direct sum.
The projections $\pi$ and $\pi_\g$ are related by the following equation:
    \begin{equation}
    \label{diff:pi:e} 
({\rm d}\pi)_e = {({\rm d}\pi)_e}_{|_\g}\pi_\g .   \end{equation}
    Indeed, if $Y\in \g$, then the formula trivially holds;
if 
     $Y\in \mathfrak h$ , then 
    $({\rm d}\pi)_e Y= \left.  \frac{\dd}{\dd t} \exp(tY) H \right|_{t=0}
   =
     \left.  \frac{\dd}{\dd t}H \right|_{t=0}= 0$.

        The differential of the projection $\pi$ at an arbitrary point $\tilde g$ can be expressed using the projection $\pi_\g$ via 
     the following equation:
\begin{equation}
\label{diff:pi} 
    ({\rm d}\pi)_{\tilde g} = ({\rm d} (\bar L_{\tilde g} \circ \pi_{|_G}))_e\circ \pi_\g  \circ ( {\rm d} L_{{\tilde g}^{-1}})_{\tilde g} .    \end{equation}
      Indeed, first notice that 
      $  ({\rm d} \pi_{|_G})_e  = {({\rm d}\pi)_e}_{|_\g}$, then
       from \eqref{diff:pi:e} and \eqref{Lpi_piL}  we get
      \begin{eqnarray*}
      ({\rm d} (\bar L_{\tilde g} \circ \pi_{|_G}))_e\circ \pi_\g  \circ ( {\rm d} L_{{\tilde g}^{-1}})_{\tilde g} &=&
      ({\rm d} \bar L_{\tilde g})_{[e]} \circ {({\rm d}\pi)_e}_{|_\g}\circ \pi_\g  \circ ( {\rm d} L_{{\tilde g}^{-1}})_{\tilde g} \\&=&
 ({\rm d} \bar L_{\tilde g})_{[e]}\circ ({\rm d}\pi)_e  \circ ( {\rm d} L_{{\tilde g}^{-1}})_{\tilde g} \\&=&
{\rm d} (\bar L_{\tilde g} \circ  \pi  \circ  (L_{{\tilde g}})^{-1})_{\tilde g}=
       ({\rm d}\pi)_{\tilde g}  .
             \end{eqnarray*} 
      
\subsection{Induced contact vector fields}      
     To any vector $X\in T_e\tilde G \simeq \tilde \g$ we want to associate a contact vector field $X^G$ on $G$.
     Let $X^R$ be the right-invariant vector field on $\tilde G$ associated to $X$.
     We define $X^G$ as the (unique) vector field on $G$ with the property that 
    \begin{eqnarray*}
    {\rm d}\pi (X^R) ={\rm d}  i ( X^G),
     \end{eqnarray*}
     as vector fields on $i(G)$.
     In other words, we observe that there exists a (unique) vector field $\bar X$ on $\tilde G/H$ that is $\pi$-related to $X^R$ and $i$-related to some (unique) $X^G$.
The flow of $X^R$ consists of left translations in $\tilde G$, hence they pass to the quotient $ \tilde G/H$. Thus $\bar X$ shall be the vector field on $\tilde G/H$ whose flow is 
\begin{eqnarray*}
\Phi^t_{\bar X} (gH) = \pi(\exp(tX)g )=\exp(tX)g H
=
\bar L_{\exp(tX)}(gH).
     \end{eqnarray*}
 In other words, we define 
$\bar X$ as the    vector field on $\tilde G/H$ as 
 \begin{equation}\label{Interpretation:2}
\bar X_{[p]} : = ({\rm d} \pi)   (X^R)_p 
=\left.  \frac{\dd}{\dd t}\pi(\exp(tX)p )\right|_{t=0}, \qquad \forall p\in \tilde  G.
\end{equation}
\begin{definition}
For all $X\in \tilde \g$ and $g\in G$, we set 
\begin{eqnarray*}
(X^G)_g:= 
({\rm d}( \pi_{|_G})_g)^{-1} ({\rm d} \pi)_g ({\rm d} R_g)_e X.
     \end{eqnarray*}
\end{definition}

%
%
%
From \eqref{diff:pi}, the vector field $ X^G$ satisfies 
  \begin{equation}
  (X^G)_g
     =   {\rm d}({ L_g }_{|_G})_e \pi_\g \Ad_{g^{-1}} X
  , \quad \forall g\in   G,
  \end{equation}

%
%


We remark  that if $X\in \g \subset \tilde \g$ then 
 $X^G = X^R$, as vector fields in $G$.

\begin{proposition}\label{L:bar:contact}
Let $X^G$ be the 
 vector field defined above.
 Then
 
 i)
  $X^G$ has polynomial components when read in exponential coordinates.

ii)   $X^G$ is a contact vector field, i.e., its flow preserves $ \Delta$.

\end{proposition}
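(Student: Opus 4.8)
The claim has two parts, and I would treat them separately using the explicit formula $(X^G)_g = \dd(L_g{}_{|_G})_e\, \pi_\g\, \Ad_{g^{-1}} X$ that was just derived from \eqref{diff:pi}. For part (i), the strategy is to read this formula in exponential coordinates on $G$. Since $G$ is a simply connected nilpotent group, $\Ad_{g^{-1}} = e^{-\ad_{\log g}}$ is a \emph{finite} sum (nilpotency of $\ad$), and each term is polynomial in the coordinates of $\log g$; the projection $\pi_\g$ is linear, and the left translation differential $\dd(L_g)_e$ is likewise polynomial in exponential coordinates (the group law, hence left translation, is polynomial on a simply connected nilpotent group). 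Composing polynomial maps, one concludes $X^G$ has polynomial components. The one point requiring a word of care is that $\Ad_{g^{-1}}X$ a priori lives in $\tilde\g$ and uses the adjoint action of $\tilde G$, which need not be unipotent; but after applying $\pi_\g$ the result lies in $\g$, and what matters is that $g$ ranges only over $G$, so $g = \exp(Y)$ with $Y\in\g$ and one expands $\Ad_{\exp(-Y)}$ as $\sum_k \frac{(-1)^k}{k!}(\ad_Y)^k$ acting on $X$; here $\ad_Y$ for $Y\in\g$ need not be nilpotent \emph{on $\tilde\g$}, so I would instead argue that this series, composed with $\pi_\g$, still gives a polynomial map $\g \to \g$ because $\g$ is finite-dimensional and the grading $[V_i,V_j]\subseteq V_{i+j}$ with $\g$ concentrated in positive degrees forces $\pi_\g \circ (\ad_Y)^k$ to vanish for $k$ large enough (each application of $\ad_Y$, $Y\in V_1\oplus\cdots\oplus V_s$, raises degree by at least $1$ on the $\g$-part, while $\tilde\g$ has bounded degree $\le s$). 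Actually, more simply: $X^G$ is a vector field on the affine space $\g$ whose value at $\exp(Y)$ we have just identified; smoothness is automatic from the construction, and the polynomiality follows once we observe that the pullback of $\bar X$ under $i$ is the relevant object and that $\bar L_{\exp(tX)}$ acts on $\tilde G/H$ by an action whose derivative is rational with nowhere-vanishing denominator on the chart $i(G)$ — so I would prefer the direct coordinate computation via the displayed formula.

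For part (ii), the cleanest route is to go back to the \emph{definition} of $X^G$ rather than the formula: by construction $X^G$ is $i$-related to $\bar X$, and $\bar X$ is the generator of the flow $\Phi^t_{\bar X} = \bar L_{\exp(tX)}$ on $\tilde G/H$. So the flow of $X^G$ on $G$ is, through the local diffeomorphism $i = \pi_{|_G}$, conjugate to the flow of left translations by $\exp(tX)$ on $\tilde G/H$. Now $\tilde G/H$ carries the $\tilde G$-invariant subbundle $\bar\Delta$, so $\bar L_{\exp(tX)}$ preserves $\bar\Delta$; and by Lemma~\ref{contacto:morphism} the map $i$ is a local contacto-morphism, i.e., $\dd i(\Delta) = \bar\Delta$ along $i(G)$. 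Pulling back: the flow of $X^G$ preserves $\Delta$, which is exactly the statement that $X^G$ is a contact vector field. I would spell out that $\Phi^t_{X^G} = i^{-1}\circ \bar L_{\exp(tX)}\circ i$ locally (this uses $i$-relatedness of $X^G$ and $\bar X$, plus uniqueness of flows), and then that $\Delta = i^{-1}_*\bar\Delta$, so $(\Phi^t_{X^G})_*\Delta = i^{-1}_*(\bar L_{\exp(tX)})_*\bar\Delta = i^{-1}_*\bar\Delta = \Delta$.

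\textbf{Main obstacle.} The genuinely delicate point is part (i): ensuring that $\pi_\g\circ\Ad_{g^{-1}}$, restricted to $g\in G$, is polynomial despite $\Ad$ being an action of the possibly non-nilpotent group $\tilde G$. The resolution is a grading argument as above — since $\g$ sits in strictly positive degrees and $\tilde\g = V_{-h}\oplus\cdots\oplus V_s$ is bounded, the operator $\pi_\g\circ(\ad_Y)^k$ for $Y\in\g$ kills everything once $k > h+s$ (roughly), so the exponential series truncates after composition with $\pi_\g$, and the remaining finite sum is manifestly polynomial in $Y$, hence in the exponential coordinates of $g$. Together with polynomiality of $\dd(L_g)_e$ in exponential coordinates on the nilpotent group $G$, this closes part (i). Part (ii) is essentially formal once Lemma~\ref{contacto:morphism} and equation \eqref{Interpretation:2} are in hand.
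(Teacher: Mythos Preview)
Your approach to both parts matches the paper's. For (ii), the argument --- $X^G$ is $i$-related to $\bar X$, the flow of $\bar X$ is $\bar L_{\exp(tX)}$ which preserves the $\tilde G$-invariant subbundle $\bar\Delta$, and Lemma~\ref{contacto:morphism} pulls this back to $\Delta$ --- is exactly what the paper does (the paper additionally spells out the verification that each $\bar L_p$ preserves $\bar\Delta$, but this was already announced when $\bar\Delta$ was introduced).

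For (i), however, you make things harder than necessary. You write that ``$\ad_Y$ for $Y\in\g$ need not be nilpotent on $\tilde\g$'' and then work to salvage polynomiality by composing with $\pi_\g$ before truncating the exponential series. In fact $\ad_Y$ \emph{is} nilpotent on all of $\tilde\g$: since $Y$ lies in strictly positive degrees and $[V_i,V_j]\subseteq V_{i+j}$, each application of $\ad_Y$ raises the degree by at least one, and $\tilde\g$ has no component above degree $s$, so $(\ad_Y)^k=0$ on $\tilde\g$ once $k>h+s$. This is precisely the grading argument you invoke --- but it applies to $(\ad_Y)^k$ itself, not merely to $\pi_\g\circ(\ad_Y)^k$. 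The paper simply records this nilpotency at the outset, so that $\Ad_g$ (for $g\in G$) is polynomial on $\tilde\g$ without further ado. Your route reaches the same conclusion correctly, just by a detour.
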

\proof
Because the algebra $\tilde \g$ is graded, we have that for every $X\in \g$ the map $\ad_X$ is a nilpotent transformation of $\tilde \g$.
Consequently,
for all $g\in G$, the map $\Ad_g$ is a polynomial map of $\tilde \g$.
Therefore, in exponential coordinates, $X^R _{|_G}$ is a polynomial vector field and   $X^G$ is as well.

We next   show that 
the vector field  in \eqref{Interpretation:2} is contact, in tother words, each map $\bar L_{p}$ preserves $\bar \Delta$.
Any vector in $\bar \Delta$ is of the form 
${\rm d} \pi (Y^L_{\tilde g})$
with $Y_e \in
\mathfrak h +V_1$ and ${\tilde g}\in \tilde G$.
We want to show that 
$
({\rm d} \bar L_{ p})_{[\tilde g] }( {\rm d} \pi)_{\tilde g} (Y^L_{\tilde g})$ is in $\bar \Delta$. In fact,
using \eqref{Lpi_piL}, we have
\begin{eqnarray*}
({\rm d} \bar L_{ p})_{[\tilde g] }({\rm d} \pi)_{\tilde g} (Y^L_{\tilde g})
&=&
{\rm d} (\bar L_{ p}\circ  \pi)_{\tilde g} (Y^L_{\tilde g})\\
&=&
{\rm d} (\pi \circ L_{ p}  )_{\tilde g} (Y^L_{\tilde g})\\
&=&
{\rm d} \pi_{p\tilde g } ( {\rm d}  L_{ p}  )_{\tilde g} (Y^L_{\tilde g})\\
&=&
{\rm d} \pi_{p\tilde g }   (Y^L_{p\tilde g}) \in {\rm d} \pi (\tilde \Delta).
\end{eqnarray*}
Now that we know that $\bar X$ is a contact vector field of $\tilde G/H$,
from Lemma~\ref{contacto:morphism} we deduce that 
 the vector field $X^G$, which satisfies $\bar X = {\rm d} i ( X^G)$, is 
 a contact vector field on $G$.
 \qed
 
  
  For a subspace $W\subseteq \tilde \g$ we   use the notation
  \begin{eqnarray*}
  W ^G:= \{ X^G\in {\rm Vec}(G) \mid X\in W\}. \end{eqnarray*}

 \begin{corollary}
 If $\mathcal S$ denotes the space of global contact vector fields on $G$ that vanish at the identity, we have
 \begin{eqnarray*}
 \mathfrak h ^G \subseteq \mathcal S.
  \end{eqnarray*}
 \end{corollary}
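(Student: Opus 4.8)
The plan is to show that for every $X\in\mathfrak h$ the associated vector field $X^G$ is a contact vector field on $G$ that vanishes at the identity, so that $X^G\in\mathcal S$ and hence $\mathfrak h^G\subseteq\mathcal S$. The contact property is already provided by Proposition~\ref{L:bar:contact}(ii), which asserts that $X^G$ is a contact vector field for \emph{any} $X\in\tilde\g$. So the only thing that remains to be checked is that $X^G$ vanishes at $e\in G$ precisely when $X\in\mathfrak h$.

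First I would read off the value $(X^G)_e$ from the formula established just before Proposition~\ref{L:bar:contact}, namely
\begin{equation*}
(X^G)_g = \dd({L_g}_{|_G})_e\,\pi_\g\,\Ad_{g^{-1}}X,\qquad\forall g\in G.
\end{equation*}
Evaluating at $g=e$ gives $(X^G)_e = \dd(\id)_e\,\pi_\g\,\Ad_e X = \pi_\g(X)$, where $\pi_\g:\tilde\g=\mathfrak h\oplus\g\to\g$ is the projection along $\mathfrak h$. Therefore $(X^G)_e=0$ if and only if $X\in\ker\pi_\g=\mathfrak h$. In particular, for every $X\in\mathfrak h$ we get $(X^G)_e=0$, so $X^G$ vanishes at the identity of $G$.

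Combining the two observations: for $X\in\mathfrak h$ the vector field $X^G$ is contact (Proposition~\ref{L:bar:contact}(ii)) and vanishes at $e$, hence $X^G\in\mathcal S$ by the definition of $\mathcal S$. Since $\mathfrak h^G=\{X^G:X\in\mathfrak h\}$, this yields $\mathfrak h^G\subseteq\mathcal S$, as claimed. I do not expect any genuine obstacle here: the corollary is essentially an immediate bookkeeping consequence of the already-proven Proposition~\ref{L:bar:contact} together with the explicit formula for $X^G$; the only point requiring a line of justification is the evaluation $(X^G)_e=\pi_\g(X)$, and that is a one-step substitution.
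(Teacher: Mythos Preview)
Your proof is correct and follows essentially the same approach as the paper: both note that the contact property is already given by Proposition~\ref{L:bar:contact}(ii), and then verify that $(X^G)_e=0$ for $X\in\mathfrak h$ by a one-line computation. The only minor difference is in that computation: you evaluate the explicit formula $(X^G)_g=\dd(L_g|_G)_e\,\pi_\g\,\Ad_{g^{-1}}X$ at $g=e$ to get $\pi_\g(X)=0$, whereas the paper passes through the $i$-related field $\bar X$ on $\tilde G/H$ and computes $(\bar X)_{[e]}=\tfrac{d}{dt}\big|_{t=0}\pi(\exp(tX))=\tfrac{d}{dt}\big|_{t=0}H=0$; both are equally direct.
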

 
 \begin{proof}
 Let $X\in \mathfrak h$. We already proved that $X^G$ is a contact vector field on $G$.
 We only need to verify that $(X^G)_e=0$.
 Since $X^G$ is $i$-related to $\bar X$, it is equivalent to show that $(\bar X )_e=0$, but
 \[(\bar X)_e = \left.  \frac{\dd}{\dd t}\pi(\exp(tX) )\right|_{t=0}
 =
 \left.  \frac{\dd}{\dd t}H\right|_{t=0}
  =0,\]
  as desired.
 \end{proof}

\subsection{A criterion  for Sard's property}

 For $g\in G$, denote $\mathcal S(g)=\{\xi(g)\mid \xi \in \mathcal S\}$. Also, define 
 \begin{eqnarray*}
\mathcal E:= \{g\in G\mid (R_g)_*V_1+(L_g)_* V_1 + \mathcal S(g)=T_g G\}.
 \end{eqnarray*}
Given a horizontal curve $\gamma$ with control $u$, from Section~\ref{EndpointMap} we know that
 \begin{eqnarray*}
(R_{\gamma(1)})_*V_1+(L_{\gamma(1)})_* V_1 + \mathcal S(\gamma(1))\subset {\rm Im}(\dd \End_u)\subset T_{\gamma(1)}G.
 \end{eqnarray*}
Therefore,
if the set $\mathcal E$ is not empty then the abnormal set is a proper subset of $G$. Moreover, observing that $\mathcal E$ is defined by a polynomial relation (see Proposition~\ref{L:bar:contact}), we can deduce that, whenever $\mathcal E$ is not empty then $G$ has  the (Algebraic) Sard Property.

\begin{proposition}\label{criterion_1}
Let $G$ be a Carnot group and let $\tilde G$ and $H$ as in the beginning of Section~\ref{algebraic_prolongation}. Let $\g, \tilde \g$ and  $\mathfrak{h}$ be the corresponding Lie algebras.
Assume that there are $p\in \tilde G$ and $g\in G$ such that
$pH=gH $
 and
\[  \mathfrak h + V_1 + \Ad_{{p^{-1}}} (    \mathfrak h + V_1 ) = \tilde \g.\]
Then 
\begin{equation}\label{890}
(L_g)_* V_1 + (R_g)_* V_1  +  \mathfrak h ^G(g) = T_g G.
\end{equation}
Moreover, the above formula holds for a nonempty Zariski-open set of points in $G$, and so $G$ has the Algebraic Sard Property.
\end{proposition}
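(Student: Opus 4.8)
The plan is to translate the hypothesis, which is stated on the Lie algebra $\tilde\g$ at the point $p$, into the conclusion \eqref{890} about $G$ at the point $g$, using the pushforward identities for $\pi$ established in Section~\ref{algebraic_prolongation}; then to promote the single point $g$ to a Zariski-open set by a polynomiality argument. First I would recall from Lemma~\ref{contacto:morphism} that $i=\pi_{|_G}\colon(G,\Delta)\to(\tilde G/H,\bar\Delta)$ is a local diffeomorphism, so \eqref{890} is equivalent to the corresponding statement at $[g]=[p]\in\tilde G/H$, namely
\[
({\rm d}\bar L_{[p]})_{[e]}\bar\Delta_{[e]} + ({\rm d} i)_g(R_g)_*V_1 + \bar X\text{-images} = T_{[p]}(\tilde G/H),
\]
where $\bar X$ ranges over $\{X^G : X\in\mathfrak h\}$ read through $i$. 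The key is that $({\rm d}\pi)_e$ maps $\mathfrak h+V_1=\tilde\Delta_e$ onto $\bar\Delta_{[e]}$ with kernel $\mathfrak h$, and that by \eqref{Interpretation:2} the contact fields $\mathfrak h^G$ correspond, via $i$, to $\bar X_{[p]}=({\rm d}\pi)_p(X^R)_p=({\rm d}\pi)_p({\rm d} R_p)_e X$ for $X\in\mathfrak h$.

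The heart of the computation is the following. Apply $({\rm d}\pi)_p$ to the hypothesis $\mathfrak h+V_1+\Ad_{p^{-1}}(\mathfrak h+V_1)=\tilde\g$. Using \eqref{diff:pi}, i.e.\ $({\rm d}\pi)_p=({\rm d}\bar L_p)_{[e]}\circ({\rm d}\pi)_e\circ\pi_\g\circ({\rm d} L_{p^{-1}})_p$, one gets $T_{[p]}(\tilde G/H)=({\rm d}\pi)_p\tilde\g$. Now split the three summands of the hypothesis:
\begin{itemize}
\end{itemize}
— rather, treat them in order. The summand $\Ad_{p^{-1}}V_1$: since $({\rm d} R_p)_e\Ad_{p^{-1}}X = ({\rm d} L_p)_e X$, we have $({\rm d}\pi)_p({\rm d} R_p)_e\Ad_{p^{-1}}V_1$; but $({\rm d}\pi)_p({\rm d} R_p)_e = ({\rm d}\bar L_p)_{[e]}({\rm d}\pi)_e\pi_\g$ composed appropriately, and more directly $({\rm d}\pi)_p(X^R)_p$ for $X\in\mathfrak h+V_1$ — for $X\in V_1$ this lands in $({\rm d} i)_g(R_g)_*V_1$ (this is precisely the content of $X^G=X^R$ for $X\in\g$, noted right after Proposition~\ref{L:bar:contact}), and for $X\in\mathfrak h$ it is exactly an element of the span of $\mathfrak h^G$ read through $i$. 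The summand $\mathfrak h+V_1$ (without $\Ad$): applying $({\rm d}\pi)_p=({\rm d}\bar L_p)_{[e]}({\rm d}\pi)_e\pi_\g({\rm d} L_{p^{-1}})_p$, the factor $({\rm d} L_{p^{-1}})_p$ carries $({\rm d} L_p)_e(\mathfrak h+V_1)$ back to $\mathfrak h+V_1$, so $\pi_\g$ then $({\rm d}\pi)_e$ send it into $({\rm d}\bar L_p)_{[e]}\bar\Delta_{[e]}=({\rm d}\bar L_{[p]})({\rm d} i)_e V_1$ — which, pulled back by the local diffeomorphism $i$, is $(L_g)_*V_1$. Assembling, the image of $\tilde\g$ — which is all of $T_{[p]}(\tilde G/H)$ — is contained in $(L_g)_*V_1+(R_g)_*V_1+\mathfrak h^G(g)$ transported through $i$, giving \eqref{890}. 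I expect the bookkeeping of which $({\rm d}\pi)$ or $({\rm d}\bar L)$ goes where to be the main obstacle: one must be careful that $({\rm d}\pi)_p$ does not kill the $V_1$ part (it does not, since $V_1\cap\mathfrak h=0$) and that the $\Ad_{p^{-1}}\mathfrak h$ contribution is genuinely absorbed into $\mathfrak h^G$ rather than producing something outside it.

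Finally, for the Zariski-open statement: by Proposition~\ref{L:bar:contact}(i) the vector fields $X^G$ ($X\in\mathfrak h$) have polynomial components in exponential coordinates, and $(L_g)_*V_1$, $(R_g)_*V_1$ likewise depend polynomially on $g$ (the group law and $\Ad$ being polynomial on the nilpotent group $G$). Hence the condition "$(L_g)_*V_1+(R_g)_*V_1+\mathfrak h^G(g)=T_gG$" is the non-vanishing of suitable maximal minors of a matrix with polynomial entries in $g$, so the set $\mathcal E$ of such $g$ is Zariski-open. We have just exhibited one point of it, $g$, coming from the hypothesis on $p$; therefore $\mathcal E$ is a nonempty Zariski-open subset. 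Since any $g\in\mathcal E$ fails to be a singular value of $\End$ by \eqref{R_V_L_V} and \eqref{S1}, the abnormal set $\Abn(e)$ is contained in the proper algebraic subvariety $G\setminus\mathcal E$, which is exactly the Algebraic Sard Property. \qed
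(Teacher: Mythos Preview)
Your proposal is correct and follows essentially the same route as the paper: push the hypothesis from $\tilde\g$ to $T_{[p]}(\tilde G/H)$ via $({\rm d}\pi)_p$ (equivalently, the paper applies $\pi_\g$ followed by ${\rm d}(\bar L_p\circ i)_e$, which by \eqref{diff:pi} is the same map), identify the three summands with $(L_g)_*V_1$, $(R_g)_*V_1$ and $\mathfrak h^G(g)$ through the local diffeomorphism $i$, and then invoke polynomiality for the Zariski-open claim. One small slip: the identity you quote should read $({\rm d}L_p)_e\Ad_{p^{-1}}X=({\rm d}R_p)_eX$, not the reverse; you immediately recover the correct expression $({\rm d}\pi)_p(X^R)_p$, so the argument is unaffected.
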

\begin{proof}
Project the equation using $\pi_\g:   \mathfrak h  \oplus  \g  \to \g$ and get
\[   V_1 + \pi_\g\Ad_{{p^{-1}}} (    \mathfrak h + V_1 ) =  \g.\]
Apply the differential of  $\bar L_p \circ \pi_{|_G}$, i.e., the map
 \[  {\rm d} (\bar L_p \circ \pi_{|_G} )_e : \g=T_eG \to T_{[p]}  ( \tilde G / H  ) \]
and get
\[    {\rm d} (\bar L_p \circ \pi_{|_G} )_e V_1 +  {\rm d} (\bar L_p \circ \pi_{|_G} )_e \pi_\g\Ad_{{p^{-1}}} (    \mathfrak h + V_1 ) =  T_{[p]}  ( \tilde G / H  ) .\]
By Equation    \eqref{diff:pi}, the left hand side is equal to
\begin{eqnarray*}
&&\hspace{-4cm}{\rm d} (\bar L_p)_{[e]}  (  {\rm d} i )_e V_1 +  ({\rm d} \pi )_p  (  {\rm d} R_{{p }}) (    \mathfrak h + V_1 ) \\
&=&{\rm d} (\bar L_p)_{[e]}  (  {\rm d} i )_e V_1 +  ({\rm d} \pi )_p  (   (    \mathfrak h + V_1)^R )_p \\
&=&{\rm d} (\bar L_p)_{[e]}  (  {\rm d} i )_e V_1 +  ({\rm d} i )_g  (   (    \mathfrak h + V_1)^G )_g \\
&=&({\rm d} i )_g   {\rm d} ( L_g)_{e}    V_1 +  ({\rm d} i )_g  ( {\rm d} R_{{g }})_e V_1   +   ({\rm d} i )_g       \mathfrak h ^G(g) .    
\end{eqnarray*}
Now \eqref{890} follows because $({\rm d} i )_g $ in an isomorphism. Since \eqref{890} is expressed by polynomial inequations, also the last part of the statement follows. 
\end{proof}

We give an infinitesimal version of the result above.
\begin{proposition}\label{criterion_2}
Assume that 
there exists $\xi \in \tilde \g$ such that
\[  \mathfrak h + V_1 + \ad_{\xi} (    \mathfrak h + V_1 ) = \tilde \g.\]
Then
there are $p\in \tilde G$ and $g\in G$ such that
$pH=gH $
 and
\[  \mathfrak h + V_1 + \Ad_{{p^{-1}}} (    \mathfrak h + V_1 ) = \tilde \g.\]
\end{proposition}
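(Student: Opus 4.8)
The plan is to produce $p$ of the form $p=\exp(t\xi)$ with $t\neq0$ small. Write $W:=\mathfrak h+V_1$ and $A:=\ad_\xi$, and let $\psi\colon\tilde\g\to\tilde\g/W$ be the canonical projection, so that the hypothesis reads: \emph{$\psi\circ A|_W\colon W\to\tilde\g/W$ is surjective.} Since $\Ad_{\exp(-t\xi)}=e^{-tA}$, the conclusion we are after, namely $W+\Ad_{p^{-1}}W=\tilde\g$, is equivalent to the surjectivity of $\psi\circ e^{-tA}|_W\colon W\to\tilde\g/W$. Because $W\subseteq\ker\psi$, we have $\psi\circ e^{-tA}|_W=\psi\circ(e^{-tA}-\id)|_W$, and the curve $t\mapsto\psi\circ(e^{-tA}-\id)|_W$ in $\hom(W,\tilde\g/W)$ vanishes at $t=0$ with derivative $-\,\psi\circ A|_W$ there.

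First I would record that surjectivity of $\psi\circ A|_W$ forces $\dim W\ge\dim(\tilde\g/W)$, so that the surjective elements of $\hom(W,\tilde\g/W)$ form a nonempty open subset. Next, write $\psi\circ(e^{-tA}-\id)|_W=-t\,L_t$, where $L_t\in\hom(W,\tilde\g/W)$ depends analytically on $t$ and $L_0=\psi\circ A|_W$. For all $t$ near $0$ the map $L_t$ is close to the surjective map $L_0$, hence surjective; multiplying by the nonzero scalar $-t$ preserves surjectivity, so $\psi\circ e^{-tA}|_W$ is surjective and thus $W+\Ad_{p^{-1}}W=\tilde\g$ for every sufficiently small $t\neq0$.

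It remains to guarantee that $pH=gH$ for some $g\in G$. By Lemma~\ref{contacto:morphism}, $\pi|_G$ is a local diffeomorphism at $e$, so $\pi$ carries a neighborhood of $e$ in $G$ onto a neighborhood $\mathcal U$ of $[e]$ in $\tilde G/H$. Since $\pi(\exp(t\xi))\to[e]$ as $t\to0$, for $t$ small we get $\exp(t\xi)H=\pi(\exp(t\xi))\in\mathcal U\subseteq\pi(G)$, i.e.\ $\exp(t\xi)H=gH$ for some $g\in G$. Picking $t\neq0$ small enough that both this and the conclusion of the previous paragraph hold, and setting $p:=\exp(t\xi)$, completes the proof.

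The argument is essentially soft; the only delicate point is that at $t=0$ the desired equality degenerates to $W+W=W$, so one cannot simply invoke the openness of the full-rank condition but must extract the first-order behaviour of $t\mapsto\psi\circ e^{-tA}|_W$ — and that is exactly where the hypothesis on $\xi$ (surjectivity of $\psi\circ A|_W$) enters.
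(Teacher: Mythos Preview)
Your proof is correct and follows essentially the same approach as the paper's: both take $p=\exp(t\xi)$, rescale $\Ad_{p^{-1}}|_W$ (you via the factorization $-tL_t$, the paper by considering $\Ad_{p_t}(\frac{1}{t}Y_i)$) so that the limit at $t=0$ recovers $\ad_\xi|_W$, invoke openness of the full-rank condition, and finish with the local diffeomorphism $\pi|_G$ to locate $g$. Your quotient-map phrasing is a touch more invariant than the paper's choice of a basis of $W$, but the content is identical.
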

\begin{proof}
For all $t>0$, let $p_t:= \exp(t \xi)$.
Take $Y_1,\ldots, Y_m$ a basis of $ \mathfrak h + V_1$.
Let \[Y^t_i:=\Ad_{p_t}(\frac{1}{t} Y_i) =  \ad_\xi (Y_i) + t\sum_{k\geq 1}\dfrac{t^{k-2} (\ad_\xi)^k}{k!}(Y_i).\]
Notice that $Y^t_i\to \ad_\xi (Y_i) $, as $t\to0$.
Then we have
\[ \mathfrak h + V_1+\Ad_{p_t}(  \mathfrak h + V_1) = \span\{Y_1,\ldots, Y_m, Y_1^t,\ldots, Y_m^t   \}.\]
Since 
\[ \span\{Y_1,\ldots, Y_m, Y_1^0,\ldots, Y_m^0   \}= \mathfrak h + V_1 + \ad_{\xi} (    \mathfrak h + V_1 ) = \tilde \g,\]
then $Y_1,\ldots, Y_m, Y_1^t,\ldots, Y_m^t$ span the whole space $\tilde \g$ for $t>0$ small enough.
Moreover, since $p_t\to e\in \tilde G$ and hence $[p_t]\to [e]\in \tilde G/H$, for $t>0$ small enough there exists $g\in G$ such that $[g]=[p_t]$, because $i:G \to \tilde G/H$ is a local diffeomorphism at $e\in G$.
\end{proof}

Combining Proposition~\ref{criterion_1} and \ref{criterion_2} we obtain the following.
\begin{corollary}\label{criterium}
Let $G$ be a Carnot group with Lie algebra $\g$. 
Let $\tilde \g$ and $\mathfrak{h}$ as in  the beginning of Section~\ref{algebraic_prolongation}.
Assume that 
there exists $\xi \in \tilde \g$ such that
\[  \mathfrak h + V_1 + \ad_{\xi} (    \mathfrak h + V_1 ) = \tilde \g.\]
Then $G$ has the Algebraic Sard Property.
\end{corollary}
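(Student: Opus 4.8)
The plan is to simply chain together the two preceding results, Proposition~\ref{criterion_1} and Proposition~\ref{criterion_2}, since Corollary~\ref{criterium} is precisely their composition. First I would invoke Proposition~\ref{criterion_2}: given the hypothesis that there exists $\xi\in\tilde\g$ with $\mathfrak h + V_1 + \ad_\xi(\mathfrak h + V_1) = \tilde\g$, this proposition produces points $p\in\tilde G$ and $g\in G$ with $pH = gH$ and $\mathfrak h + V_1 + \Ad_{p^{-1}}(\mathfrak h + V_1) = \tilde\g$. Then I would feed exactly these $p$ and $g$ into Proposition~\ref{criterion_1}, whose hypothesis is now verified, to conclude that $(L_g)_*V_1 + (R_g)_*V_1 + \mathfrak h^G(g) = T_g G$ holds, and moreover that this holds on a nonempty Zariski-open subset of $G$, so that $G$ has the Algebraic Sard Property.

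The only point requiring a word of care is the compatibility of the two statements' setups: both Proposition~\ref{criterion_1} and Proposition~\ref{criterion_2} are stated relative to the same data $(\tilde G, H, G)$ fixed at the beginning of Section~\ref{algebraic_prolongation}, so there is no mismatch, and the output $(p,g)$ of the former is literally the input hypothesis of the latter. I would also recall, for the reader's benefit, that by the Corollary preceding this one we have $\mathfrak h^G \subseteq \mathcal S$, so that $(L_g)_*V_1 + (R_g)_*V_1 + \mathfrak h^G(g) \subseteq (L_g)_*V_1 + (R_g)_*V_1 + \mathcal S(g) \subseteq \operatorname{Im}(\dd\End_{u})$ for any horizontal $\gamma$ ending at $g$; hence the set $\mathcal E$ is nonempty and in fact Zariski-dense, which is what the Algebraic Sard Property asserts.

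Since every ingredient is already proved, there is essentially no obstacle here — the proof is a one-line deduction. If anything, the only thing to double-check is that the polynomiality claim (needed to go from "$\mathcal E$ nonempty" to "Algebraic Sard") is available: this is exactly part (i) of Proposition~\ref{L:bar:contact}, which tells us that each $X^G$ has polynomial components in exponential coordinates, so that the condition defining $\mathcal E$ is a system of polynomial inequations and its complement is a proper algebraic subvariety. I would therefore present the proof of Corollary~\ref{criterium} as: apply Proposition~\ref{criterion_2} to obtain $p$ and $g$, then apply Proposition~\ref{criterion_1} with these $p$ and $g$.

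\begin{proof}[Proof of Corollary~\ref{criterium}]
By Proposition~\ref{criterion_2}, there exist $p\in\tilde G$ and $g\in G$ such that $pH = gH$ and
\[
\mathfrak h + V_1 + \Ad_{p^{-1}}(\mathfrak h + V_1) = \tilde\g.
\]
These $p$ and $g$ satisfy the hypotheses of Proposition~\ref{criterion_1}, which then yields that
\[
(L_g)_* V_1 + (R_g)_* V_1 + \mathfrak h^G(g) = T_g G
\]
holds on a nonempty Zariski-open set of points of $G$, and hence that $G$ has the Algebraic Sard Property.
\end{proof}
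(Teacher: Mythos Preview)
Your proposal is correct and matches the paper's own approach exactly: the paper simply states that the corollary is obtained by combining Proposition~\ref{criterion_1} and Proposition~\ref{criterion_2}, without writing out any further details. Your additional remarks (on $\mathfrak h^G\subseteq\mathcal S$ and on polynomiality via Proposition~\ref{L:bar:contact}) are accurate but not strictly needed, since the conclusion of Proposition~\ref{criterion_1} already includes the Algebraic Sard Property.
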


\section{Applications} \label{sec:applications}
In this section we use the criteria that we established in Section~\ref{sec:sufficient} in order to prove items (2) to (4) of Theorem~\ref{main_thm1}. The proof of (5) and (6) 
will be based on \eqref{Eq_on_e} and  Corollary~\ref{criterium}.

  The free Lie algebra on $r$ generators is a graded Lie algebra generated freely by an $r$-dimensional vector space $V$. It thus
  has the form 
  \[\mathfrak{f}_{r, \infty} = V \oplus V_2 \oplus V_3 \oplus \ldots\]
  Being free, the general linear group $GL(V)$ acts on this Lie algebra by strata-preserving automorphisms. 
   In order to form the free $k$-step rank $r$ Lie algebra $\mathfrak{f}_{r, k} $ we simply quotient $\mathfrak{f}_{r, \infty}$
  by the Lie ideal $\oplus_{s > k} V_s$.  
  Thus, 
  \[\mathfrak{f}_{r, k} = V \oplus V_2 \oplus \ldots \oplus V_k.\]

\subsection{Proof of (2) and (3)}\label{xi}
We consider  the free nilpotent Lie group $F_{2,4}$ with $2$ generators and step $4$, and the free nilpotent Lie group $F_{3,3}$ with $3$ generators and step $3$. Their Lie algebras are stratified, namely $\mathfrak{f}_{2,4}=V_1\oplus V_2 \oplus V_3 \oplus V_4$ and $\mathfrak{f}_{3,3}=W_1\oplus W_2 \oplus W_3$.

The Lie algebra $\mathfrak{f}_{2,4}$ is generated by two vectors, say $X_1,X_2$, in $V_1$, which one can complete to  a basis with
\begin{eqnarray*}
&X_{21}=[X_2,X_1] &\\
&X_{211}=[X_{21},X_1]& \qquad \quad X_{212}=[X_{21},X_2]\\
&X_{2111}=[X_{211},X_1]& \qquad X_{2112}=[X_{211},X_2] =[X_{212},X_1]\quad \qquad X_{2122}=[X_{212},X_2].
\end{eqnarray*}
We apply Corollary~\ref{criterium} to verify the Algebraic Sard Property for $F_{2,4}$.  We take $\mathfrak{h}$ to be the space of all strata preserving derivations of $\mathfrak{f}_{2,4}$, which in this case are generated by the action of $\mathfrak{gl}(2,\R)$ on $V_1$. 
Choose $\xi = X_2+ X_{212} + X_{2111}$. Then $[\xi,V_1]$ contains the vectors $X_{21}+X_{2112}$ and
$X_{2122}$. Next, consider the basis $\{E_{ij}\mid i,j=1,\dots,2\}$ of $\mathfrak{gl}(2,\R)$, where $E_{ij}$ denotes the matrix that has entry equal to one in the $(i,j)$-position and zero otherwise.
We compute the action of the derivation defined by each one of the $E_{ij}$'s on $\xi$. Abusing of the notation $E_{ij}$ for such derivations, an elementary calculation gives
\begin{eqnarray*}
&E_{11}\xi=X_{212}+3X_{2111}&E_{12}\xi=X_1+ X_{211}\\
&E_{22}\xi= X_2+2X_{212}+X_{2111}
&E_{21}\xi= 2X_{2112}.
\end{eqnarray*}
Since we need to show that $V_1 + \ad_\xi V_1 = \g$, it is enough to prove that $V_2\oplus V_3 \oplus V_4=(\ad_\xi V_1)\,\,{\rm mod} \,V_1$,
which follows from direct verification.

We consider now the case of the free nilpotent group of rank $3$ and step $3$.
The Lie algebra of $F_{3,3}$ is bracket generated  by three vectors in $W_1$, say $X_1,X_2,X_3$, which give a basis with
\begin{eqnarray}
\label{Hall_basis}
X_{21}=[X_2,X_1] & X_{31}=[X_3,X_1] & X_{32}=[X_3,X_2]\nonumber\\ 
X_{211}=[X_{21},X_1] & X_{212}=[X_{21},X_2] & X_{213}=[X_{21},X_3]\\\nonumber
X_{311}=[X_{31},X_1]& X_{312}=[X_{31},X_2]& X_{313}=[X_{31},X_3]\\\nonumber
X_{322}=[X_{32},X_2]& X_{323}=[X_{32},X_3].&
\end{eqnarray}
We have the bracket relation $ [X_{32},X_1]=X_{312}-X_{213}$.
We apply Corollary~\ref{criterium} to verify the Algebraic Sard Property for $F_{3,3}$. 
We choose $\xi=X_{21}+X_{31}+X_{32}+X_{312}+X_{213}$, and we consider the action of $\mathfrak h$ on it. In this case $\mathfrak h = \mathfrak{gl}(3,\R)$. Let $E_{ij}\in \mathfrak{gl}(3,\R)$ be the matrix that has entry equal to one in the $(i,j)$-position and zero otherwise. Then the set $\{E_{ij}\mid i,j=1,\dots,3\}$ is a basis of $\mathfrak{gl}(3,\R)$. We compute the action of the elements of this basis on $\xi$. If $i\neq j$ we obtain
\begin{eqnarray*}
&E_{12} \xi =X_{31}+X_{311} \quad E_{13} \xi =-X_{21}+X_{211} \quad E_{23} \xi = X_{21}+2X_{212}\\
&E_{21} \xi =X_{32}+X_{322} \quad E_{31} \xi = -X_{32}-X_{323} \quad E_{32} \xi = X_{31} +2X_{313}
\end{eqnarray*}
whereas if $i=j$
\begin{eqnarray*}
&E_{11}\xi=X_{21}+X_{31}+X_{213}+X_{312} \\
&E_{22}\xi =X_{21}+X_{32}+X_{213}+X_{312} \\
&E_{33}\xi=X_{31}+X_{32}  +X_{213}+X_{312}.
\end{eqnarray*}
Next, we consider $[\xi,V_1]$ and notice that it contains the vectors $v=X_{212}+X_{312}+X_{322}$ and $w=X_{213}+X_{313}+X_{323}$. It is now elementary to verify that the eleven vectors $\{E_{ij}\xi\mid i,j=1,2,3\}$, $v$ and $w$ are linearly independent and therefore are a basis of $W_2\oplus W_3$.
In conclusion, $\xi$ satisfies the hypothesis of Corollary~\ref{criterium}.

\begin{remark}
In the above proof, we had to chose the element $\xi$ properly. This was done considering how $GL(3)$ acts on $F_{3,3}$.
Actually, 
$SL(3)$ acts by graded  automorphisms on $\mathfrak{f}_{3,3}$.  As a consequence
each layer,  $W_1, W_2$ and $W_3$,  form $SL(3)$ representations.  We will see in Section~\ref{DeepOnF33}
that the third  layer $W_3$ 
is isomorphic to ${\mathfrak sl}(3)$ with the adjoint representation of $SL(3)$.
This observation allowed us to find the   element $\xi$.
\end{remark}

\subsection{Semisimple Lie groups and associated polarized groups}\label{iwasawa}

We complete here the proof of Theorem~\ref{main_thm1}. We first recall some standard facts in the theory of semisimple Lie groups. For the details we refer the reader to \cite{Knapp}. 
To be consistent with the standard notation, only in this section we write $G$ for a noncompact semisimple Lie group and $N$ (rather than $G$) for the nilpotent part of a parabolic subgroup.

 If $\theta$ is a Cartan involution of the semisimple Lie algebra $\g$ of $G$, then the Cartan decomposition  is given by the vector space direct sum
\[
\mathfrak{g} = \mathfrak{k}\oplus\mathfrak{p},
\]
where $ \mathfrak{k}$ and $\mathfrak{p}$ are the eigenspaces relative to the two eigenvalues $1$ and $-1$ of $\theta$. We fix a maximal abelian subspace $\mathfrak{a}$ of $\mathfrak{p}$, whose dimension will be denoted by $r$. Let $B$ be the Killing form on $\g$; the bilinear form $\langle X,Y\rangle := -B(X,\theta Y)$ defines a scalar product on $\g$, for which the Cartan decomposition is orthogonal and by which $\mathfrak{a}$ can be identified with its dual $\mathfrak{a}^*$. We fix an order on the system $\Sigma\subset \mathfrak{a}^*$ of nonzero restricted roots of $(\g,\mathfrak{a})$. Let $\mathfrak{m}=\{X\in \mathfrak{k} \mid [X,Y]=0 \,\,\forall Y\in \mathfrak{a}\}$. 
The algebra $\g$ decomposes  as $\g= \mathfrak{m} +\mathfrak{a}+ \oplus_{\alpha\in \Sigma} \mathfrak{g}_\alpha$, where $\g_\alpha$ is the root space relative to $\alpha$.
 We denote by $\Sigma_+$ the subset of positive roots.
 The Lie algebra of $N$, denoted $\mathfrak{n}$, decomposes as the sum of (positive) restricted root spaces $\mathfrak{n}=\oplus_{\alpha\in \Sigma_+} \mathfrak{g}_\alpha$.

%


\subsubsection{Proof of (4)}
Denote by $\Pi_+$ the subset of positive simple roots.
 The space $V= \oplus_{\delta\in \Pi_+} \mathfrak{g}_\delta$ provides a stratification of $\mathfrak{n}$,  so that $(N,V)$ is a Carnot group. 
We prove that $(N,V)$ has the Algebraic Sard Property. Let $w$ be a representative in $G$ of the longest element in the analytic Weyl group. 
From \cite[Theorem 6.5]{Knapp} we have 
 $\Ad w^{-1} \bar{\mathfrak{n}}=\mathfrak{n}$,
where $\bar{\mathfrak{n}}= \oplus_{\alpha\in -\Sigma_+} \mathfrak{g}_\alpha$. 
The Bruhat decomposition of $G$ shows that  $N$ may be identified with the dense open subset $N \bar P$ of the homogeneous space $G/\bar P$, where
$\bar P$ denotes the minimal parabolic subgroup of $G$ containing $\bar N$. Here we wrote $\bar N$ for the connected nilpotent Lie group whose Lie algebra is $\bar{\mathfrak{n}}$.
 Now we apply Proposition~\ref{criterion_1} to $\mathfrak{h}= \mathfrak{m}+  \mathfrak{a} + \bar{\mathfrak{n}}$. From our discussion it follows that $\mathfrak{h} + \Ad w^{-1} \mathfrak{h} = \g$.
 This equality holds true in a small neighborhood of $w$, so by density we can find $p$ in $G$ such that $[p]=[n]$ for some $n\in N$ and for which $\mathfrak{h} + \Ad p^{-1} \mathfrak{h} = \g$. Then by Proposition~\ref{criterion_1} we conclude that 
   the desired Sard's property for $N$ follows.

\subsubsection{Proof of (5)}
From the properties of the Cartan decomposition it follows that $[\mathfrak{p},\mathfrak{p}]=\mathfrak{k}$. Then $(G,\mathfrak{p})$ is a polarized group.
We restrict to the case where $\g$ is the split real form of a complex semisimple Lie algebra.
In order to show that $(G,\mathfrak{p})$ has the Analytic Sard Property, 
 we show that there is $\xi \in \mathfrak{a}$ such that $\ad_{\xi} \mathfrak{p}=\mathfrak{k}$. If this holds, then by a similar argument of that in the proof of Proposition~\ref{criterion_2} we also have $\mathfrak{p} + \Ad_{g}\mathfrak{p}=\g$ for some $g\in G$, from which we deduce the Analytic Sard Property. Let then $\xi$ be a regular element in $\mathfrak{a}$. This implies in particular that $\xi$ is such that $\alpha(\xi)\neq 0$ for every root $\alpha$.
Next, observe that for every $\alpha\in \Sigma$ and  $X\in \g_\alpha$, we may write 
 \begin{eqnarray*}
X= \frac{1}{2}(X-\theta X) +\frac{1}{2}(X+\theta X),
 \end{eqnarray*}
where $X-\theta X\in \mathfrak{p}$ and $X+\theta X\in \mathfrak{k}$.
We obtain
 \begin{eqnarray*}
[\xi,X - \theta X]=\alpha(\xi)X - \theta[\theta \xi, X]= \alpha(\xi)(X+ \theta X).
 \end{eqnarray*}
The assumption that $\g$ is split implies in particular that $\mathfrak{k}$ is generated by vectors of the form $X+\theta X$, with $X$ a nonzero vector in a root space.
Since $\xi$ is regular, it follows that $\ad_{\xi} \mathfrak{p}=\mathfrak{k}$, which concludes the proof. 
\\

We observe that if $\g$ is not split, then we do not find a vector $\xi$ such that $\mathfrak{p}+\ad_{\xi} \mathfrak{p}=\mathfrak{g}$ and so the same proof does not work.
This can be shown, for example, by an explicit calculation on $\g=\mathfrak{su}(1,2)$. 

\subsubsection{Proof of (6)}
We observe that  $(G,\oplus_{\alpha\in \Sigma} \mathfrak{g}_\alpha)$ is a polarized group.
Also in this case we assume that $\g$ is split. This implies that every root space $\g_\alpha$, $\alpha \in \Sigma$, is one dimensional, and that $\mathfrak{m}=\{0\}$. 
We recall that the Killing form  $B$ identifies $\mathfrak{a}$ with $\mathfrak{a}^*$. Let $H_\alpha\in \mathfrak{a}$ be such that $\alpha(H)=B(H_\alpha,H)$ for every $H\in \mathfrak{a}$.
Recall that $[X_\alpha,\theta X_\alpha] = B(X_\alpha,\theta X_\alpha) H_\alpha$ and $B(X_\alpha,\theta X_\alpha)<0$. 
Let $\delta_1,\dots,\delta_r$ be a basis of simple roots, and let $X_{\delta_i}$ be a basis of $\g_{\delta_i}$ for every $i=1,\dots,r$. 
The set of vectors $\{H_{\delta_1},\dots,H_{\delta_r}\}$ is a basis of $\mathfrak{a}$.
Then the vector 
\[
\xi= X_{\delta_1}+\dots +X_{\delta_r}
\]
satisfies $[\xi, \oplus_{\alpha\in \Sigma} \mathfrak{g}_\alpha]\supset \mathfrak{a}$, whence $\oplus_{\alpha\in \Sigma} \mathfrak{g}_\alpha + [\xi, \oplus_{\alpha\in \Sigma} \mathfrak{g}_\alpha] = \mathfrak{g}$. Arguing as in the Proof of (5), we conclude that $(G,\oplus_{\alpha\in \Sigma} \mathfrak{g}_\alpha)$ has the Analytic Sard Property.


\subsection{Sard Property for some semidirect products}\label{sec:semi}

In this section we construct polarized groups that are not nilpotent and yet have the Algebraic Sard Property.
These examples are constructed as semidirect products.


Let 
$\psi :H\to {\rm Aut}(G)$
be 
an action of a Lie group $H$ on a Lie group $G$, i.e., $\psi$ is a continuous homomorphism from $H$ to 
 the 
group of
automorphisms of 
$G$.
Write $\psi_h$ for $\psi(h)$, for $h\in H$.
The semidirect product $G \rtimes_\psi H$ has product
\begin{equation}\label{prod:semi} (g_1, h_1) \cdot (g_2, h_2)
=(g_1 \psi_{h_1}(g_2) , h_1  h_2). 
\end{equation}
Let $V\subseteq \g$ be a polarization for $G$.
Assume that 
\begin{equation}\label{contact:action}
(\psi_h)_*(V) = V, \quad \text{ for all }h\in H.\end{equation}
We consider the group $G \rtimes_\psi H$ 
endowed with the polarization $V\oplus\mathfrak{h}$, where $\mathfrak{h}$ is the Lie algebra of $H$.
\begin{prop}\label{prop:semidirect}
Assume that $G \stackrel{\psi}{\acts} H$ is an action satisfying \eqref{contact:action}.
If $(G,V)$ has the Algebraic Sard Property, so does $(G \rtimes_\psi H,V\oplus\mathfrak{h})$.
\end{prop}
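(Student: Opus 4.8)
The plan is to reduce abnormality in $\tilde G := G\rtimes_\psi H$ (with polarization $V\oplus\h$) to abnormality in $(G,V)$, by showing that the first component of an abnormal curve in $\tilde G$ is itself an abnormal curve of $(G,V)$. I would use the right-invariant-form characterization of abnormal curves, Corollary~\ref{right_invariant_interpretation}, on both groups. Throughout, identify $\tilde G$ with $G\times H$ as a manifold; endowed with the product algebraic structure, the group law \eqref{prod:semi} and the projection $p_G:\tilde G\to G$, $(g,h)\mapsto g$, are regular maps (although $p_G$ is not a homomorphism). This is where I would need to assume, as holds in all the cases of interest, that $H$ and $\psi$ are algebraic.

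First I would record the relevant formulas for the semidirect structure. Writing $(g,h) = (g,e)(e,h)$ and differentiating conjugations, for $X\in\g$, $Y\in\h$, $(g,h)\in\tilde G$ one gets
\begin{gather*}
(dL^{\tilde G}_{(g,h)})_e(X,Y) = \big((dL^G_g)_e(\psi_h)_* X,\ (dL^H_h)_e Y\big),\\
\Ad^{\tilde G}_{(g,e)}(X,0) = (\Ad^G_g X,0),\qquad \Ad^{\tilde G}_{(e,h)}(X,0) = ((\psi_h)_* X,0).
\end{gather*}
From the first identity, if $\tilde\gamma = (\gamma_1,\gamma_2)$ is horizontal in $(\tilde G,V\oplus\h)$ with control $(v(t),w(t))\in V\oplus\h$, then $\dot\gamma_1(t) = (dL^G_{\gamma_1(t)})_e(\psi_{\gamma_2(t)})_* v(t)$; since $(\psi_h)_* V = V$ by \eqref{contact:action}, the vector $(\psi_{\gamma_2(t)})_* v(t)$ lies in $V$, so $\gamma_1$ is a horizontal curve of $(G,V)$. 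Combining the last two identities, again using $(\psi_h)_* V = V$, gives $\Ad^{\tilde G}_{\tilde\gamma(t)} V = \Ad^G_{\gamma_1(t)} V\oplus\{0\}\subset\g\oplus\h$ for every $t$.

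Now suppose $\tilde\gamma$, leaving from the identity, is abnormal in $(\tilde G, V\oplus\h)$. By Corollary~\ref{right_invariant_interpretation} there is $\lambda\in(\g\oplus\h)^*\setminus\{0\}$ with $\lambda(\Ad^{\tilde G}_{\tilde\gamma(t)}(V\oplus\h)) = 0$ for all $t$. Evaluating at $t=0$ yields $\lambda|_{V\oplus\h} = 0$, so $\lambda$ is determined by $\mu := \lambda|_{\g\oplus\{0\}}\in\g^*$, and $\mu\ne 0$. By the previous paragraph $0 = \lambda(\Ad^{\tilde G}_{\tilde\gamma(t)} V) = \mu(\Ad^G_{\gamma_1(t)} V)$ for all $t$, so Corollary~\ref{right_invariant_interpretation} applied to the horizontal curve $\gamma_1$ in $(G,V)$ and the nonzero covector $\mu$ shows $\gamma_1$ is abnormal in $(G,V)$. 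Hence $\gamma_1(1)\in\Abn^G(e)$. By the Algebraic Sard Property of $(G,V)$ there is a proper real algebraic subvariety $\Sigma\subsetneq G$ containing $\Abn^G(e)$, whence $\Abn^{\tilde G}(e)\subseteq p_G^{-1}(\Sigma)$, which is a proper algebraic subvariety of $\tilde G$ (the common zero set of the functions $f\circ p_G$, $f$ ranging over defining equations of $\Sigma$, and proper since $\Sigma\ne G$). This would complete the proof.

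The main obstacle I anticipate is nothing deep but purely bookkeeping: correctly working out $\Ad^{\tilde G}$ and the left-invariant frame on the semidirect product, and in particular verifying that the first component $\gamma_1$ stays horizontal for $V$ — which is exactly the point where the invariance hypothesis $(\psi_h)_* V = V$ enters. The only other thing to be careful about is fixing the algebraic structure on $\tilde G$ so that the reduction produces an honest algebraic subvariety; this is routine given that $H$ and $\psi$ are algebraic, as they are in the examples under consideration.
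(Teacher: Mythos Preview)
Your proof is correct and follows essentially the same route as the paper: both arguments reduce to the containment $\Abn^{\tilde G}(e)\subseteq \Abn^G(e)\times H$ via the key identity $\Ad^{\tilde G}_{(g,h)}V=\Ad^G_g V\oplus\{0\}$, which rests on the invariance hypothesis $(\psi_h)_*V=V$. The only cosmetic difference is that you work on the dual side (Corollary~\ref{right_invariant_interpretation}, a covector $\lambda$ annihilating $\Ad_{\tilde\gamma(t)}(V\oplus\h)$) while the paper works on the primal side (Proposition~\ref{primo-lemma}, computing the span of $\Ad_{\gamma(t)}(V\oplus\h)$); these are immediately equivalent. Your explicit remark that $H$ and $\psi$ need to be algebraic for $p_G^{-1}(\Sigma)$ to be an algebraic subvariety is a point the paper leaves implicit.
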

\begin{proof}
We show that ${\rm Abn}_{G \rtimes_\psi H}(e)$ is contained in  ${\rm Abn}_G(e) \cdot H$.
It is a consequence of \eqref{contact:action} that a curve $ \gamma(t)=(g(t),h(t))$  in $\tilde G:=G \rtimes_\psi H$ is horizontal
 with respect to $V+\mathfrak{h}$ if and only if    $g(t)$ is horizontal in $G$ and   $h(t)$ is
 horizontal in $H$.

Hence, if $g(1)\notin  {\rm Abn}_G(e)$, i.e., $g$ is not abnormal, 
from \eqref{for:Image:of:dEnd},
we have
\begin{eqnarray*}
(\dd {\rm R}_{  \gamma(1)})_e^{-1} {\rm Im}({\dd}\End_{u_\gamma})&=& {\rm span}\{{\rm Ad}_{  \gamma(t)}(V\oplus\mathfrak{h})\mid t\in [0,1]\}\\
&\supseteq& V+\mathfrak{h} + {\rm span}\{{\rm Ad}_{  \gamma(t)}V\mid t\in (0,1]\}\\
&=& V+\mathfrak{h} + {\rm span}\{{\rm Ad}_{(g(t),0)}\Ad_{(0,h(t))}V\mid t\in (0,1]\}\\
&=& V+\mathfrak{h} + {\rm span}\{{\rm Ad}_{(g(t),0)}V\mid t\in (0,1]\} \\
&=& \g+\mathfrak{h},
\end{eqnarray*}
where we used first that
$(g,e_H) \cdot (e_G,h) = (g,h)$ and 
${\rm Ad}_{(e_G,h)}(v,0) = ((\dd \psi_h)_e v,0)$; then we used the assumption \eqref{contact:action}  and the fact
$\Ad_{(g,e_H)}(v,0) = (\Ad_gv, 0)$.
\end{proof}

\begin{remark}\label{rem:semidirect}
If $(G,V)$ is a free nilpotent Lie group for which the Algebraic Sard Property holds, we may take $H$ to be any subgroup of $GL(n,V)$ and apply the proposition above to $G\rtimes H$. 
If $(N,V)$ is a Carnot group as we defined in the first part of Section~\ref{iwasawa}, then $\mathfrak{h}$ may be chosen to be any subalgebra of $\mathfrak{m}\oplus\mathfrak{a}$. In particular, the Algebraic Sard Property holds for exponential growth Lie groups $NA$ if $N$ has step $2$.
\end{remark}


\section{Step-3 Carnot groups} \label{sec:step:three}

  Our first goal in this section  is to prove 
    Theorem~\ref{thm:step_three} concerning the Sard Property for length minimizers in  Carnot groups of step 3.
  A secondary goal is to motivate 
  the claim made in Example~\ref{rmk33} that the typical abnormal curve  in $F_{3,3}$, the free $3$-step rank-$3$ Carnot group,
  does not lie in any proper subgroup.  
To this purpose we   illustrate the beautiful structure of the abnormal equations in this case.

\subsection{Sard Property for abnormal length minimizers}

In \cite{Tan_Yang_step_3} Tan and Yang 
proved that in sub-Riemannian  step-3 Carnot groups all length minimizing curves are smooth. They also 
claim that in this setting all abnormal length minimizing curves are normal. 
Hence, Theorem~\ref{thm:step_three} would immediately follow from Lemma~\ref{lemma:abn:nor}. 
Being  unable to follow some of the proofs
in  \cite{Tan_Yang_step_3},
we prefer to provide here an independent proof of Theorem~\ref{thm:step_three},
 which relies on the weaker claim that every length-minimizing curve is normal in some  Carnot subgroup.

\begin{proof}[Proof of  Theorem~\ref{thm:step_three}]

By Lemma~\ref{lemma:abn:nor}, it is enough to estimate the set $\Abn^{lm}_{str}(e)$ of points connected to $e$ by strictly abnormal length minimizers. Let $\gamma$  be such a  curve starting from the origin $e$ of a Carnot group $G$ of step $3$. Since  $\gamma$ is not normal, then it satisfies the Goh condition; in particular, $\gamma$ is contained in the algebraic variety
\[
W^\lambda = \{ g \in G :  \lambda (\Ad_{g}  V_2) = 0  \}
\]
for some $\lambda\in \mathfrak g^*\setminus\{0\}$. We now use Remark \ref{rem:anniGoh}, Remark \ref{rem:W:linear}, and the fact that $G$ is of step-3 to deduce that $\lambda\in V_3^*\setminus\{0\}$ and that $W^\lambda$ is a proper subgroup of $G$. Hence also the accessible set $H^\lambda$ in $W^\lambda$ is a proper Carnot subgroup of $G$.

 Since $\gamma$ is still length minimizing in $H^\lambda$, either $\gamma$ is normal in $H^\lambda$, and we stop, 
 or,  being length minimizing, it is strictly abnormal (i.e., abnormal but not normal) in $H^\lambda$, 
 and we iterate. Eventually, we obtain that $\gamma$ is normal   within a Carnot subgroup. We remark that in this subgroup $\gamma$ may be abnormal or not abnormal.
 We do not need divide the two cases. 
 We decompose
\begin{eqnarray*}
\Abn^{lm}_{str}(e)\subseteq \bigcup_{G'< G}\Abn_{G'}^{nor}(e),
\end{eqnarray*}
where $\Abn_{G'}^{nor}(e)$   is the union of all     curves starting from $e$ that are contained in $G'$, are normal in $G'$, and are abnormal within $G$.



The idea is now to adapt the argument of Lemma~\ref{lemma:abn:nor} for the union of the sets $\Abn_{G'}^{nor}(e)$.
Carnot subgroups of $G$ are parametrized by the Grassmannian of linear subspaces of $V_1$. The dimension of the subgroup is a semi-algebraic function on the Grassmannian. On each of its level sets $Y_m$, all relevant data (e.g., coefficients of the Hamiltonian equation satisfied by normal length minimizing curves) are real analytic. The dual Lie algebras $\mathfrak{g}'^*$ form an analytic vector bundle over $Y_m$. Denote by $\tau_m$ the total space of this bundle. It is a semi-analytic subset of $T^*_e G$. The time 1 solutions of the Hamiltonian equations with inital data in $\tau_m$ give rise to real analytic maps $ \widetilde{Exp}_m:\tau_m \to L^2([0,1],V)$. Each subgroup has its own
geodesic exponential map, giving rise to an analytic map $Exp_m:\tau_m \to G$. Again,
\begin{eqnarray*}
Exp_m=\End\circ \widetilde{Exp}_m .
\end{eqnarray*}
Every point in $\bigcup_{G'<G}\Abn_{G'}^{nor}(e)$ is a value of some $Exp_m$ where the differential of $\End$ is not onto. Therefore, it is a singular value of $Exp_m$. This constitutes a measure zero sub-analytic subset of $G$. 

%
\end{proof}

\begin{remark} 
In the free $3$-step Carnot group, we are not able to bound  the codimension of $\Abn^{lm}(e)$  away from $1$. However,  the codimension of $\Abn^{lm}_{str}(e)$ is at least 3.
Actually, in the free $3$-step rank-$r$ group 
$\mathbb F_{r,3}$
this codimension is greater or equal than $r^2- r +1$.
The calculation is similar to the one in Section \ref{sec:contained:codimension}.
Indeed,
by the Witt Formula (see \cite[p.140-142]{Bourbaki_Lie}) the dimension of $\mathbb F_{r,3}$ is
\begin{equation}\label{Witt's formula}
\dim\mathbb F_{r,3} =r+\dfrac{r(r-1)}{2} +\dfrac{r^3-r}{3}.
\end{equation}
In the proof of Theorem~\ref{thm:step_three}, 
we showed that each abnormal geodesic from the origin is in a subgroup, which therefore has codimension bounded by 
$\dim\mathbb F_{r-1,3} $, computable via the Witt Formula \eqref{Witt's formula}.
The collection of all the subgroups of rank $r-1$ can be parametrized via the Grassmannian $Gr(r,r-1)$, which has dimension $r-1$.
Therefore, we compute 
\[
\dim \mathbb F_{r,3}
-
 \dim \mathbb F_{r-1,3}
 -
\dim Gr(r,r-1)
=
r^2 - r +1
.
\]
Notice that $r^2 - r +1 $ equals 3 if $r=2$, and is strictly greater than 7 if $r\geq 3$.
\end{remark}

\subsection{Investigations in the rank-$3$ case}\label{DeepOnF33}

  As said in Section~\ref{sec:applications}, the group $GL(V)$ acts on 
  each strata $V_j$ of the free algebra $\mathfrak{f}_{r, \infty}$.
   So 
each summand $V_j$ breaks up into $GL(V)$ irreducibles.
Also,  the 
$k$-step rank $r$ Lie algebra
 decomposes as a  representation space
  \[\mathfrak{f}_{r, k} = V \oplus V_2 \oplus \ldots \oplus V_k.\]
  The first summand $V$ is the `birthday representation' of $GL(V)$.  
   The second summand is well-known as a $GL(V)$ representation, and in any case is easy to guess:
\[V_2 = \Lambda ^2 V\]
with the Lie bracket $V \times V \to \Lambda^2 V$ being $[v,w] = v \wedge w$.  
The third summand is less well-known and  will be treated momentarily.
First a few more generalities.  Any algebra becomes a Lie algebra when we define the Lie bracket between
two elements to be their commutator.  
So the full tensor algebra $\mathfrak{T} (V) = V \oplus V^{\otimes 2} \oplus V^{\otimes 3} \oplus\dots$ inherits
a Lie algebra structure.  Under this bracket we have $[v,w] = v\otimes w - w \otimes v = v \wedge w$ for
$v, w \in V$.  
 The free Lie algebra over $V$ is the Lie subalgebra  that is Lie-generated by $V$ within the
full tensor algebra $\mathfrak{T} (V)$.   In particular, 
\[V_r \subset V^{\otimes r}.\]
Both the symmetric group $S_r$ on $r$ letters, and the general linear group $GL(V)$ acts on $V^{\otimes r}$.
By Schur-Weyl duality, see \cite[Exercise 6.30 page 87]{Fulton_Harris}, under the joint action of $GL(V) \times S_r$ the space $V^{\otimes r}$ breaks up completely into irreducibles
and this representation is ``multiplicity free'':  each irreducible occurs at most once.  The irreducibles themselves  are written in  the form $S_{\lambda} (V) \otimes {\rm Specht}(\lambda)$.
Here $\lambda$ is a partition of $r$ and is represented by a Young Tableaux with blank boxes.
Then $S_{\lambda} (V)$ is the irreducible representation of $GL(V)$ corresponding to $\lambda$, whereas ${\rm Specht}(\lambda)$
is the irreducible representation of $S_r$ corresponding to this $\lambda$.  If we are only interested in decomposing $V^{\otimes r}$ into $GL(V)$-irreducibles, what this
means is that each   irreducible $S_{\lambda} (V)$ occurs $\dim({\rm Specht}(\lambda))$ times.  
For example, the representation $S^r (V)$ of symmetric powers of $V$ corresponds to the partition $r = 1 + 1 + 1 + \ldots +1$.
The representation $\Lambda^r (V)$ corresponds to the partition $r = r$.  

To the case at hand, $V_3 \subset V^{\otimes 3}$   corresponds to the partition $3 = 2+1$.  
This representation  is dealt with   in fine detail in \cite[pages 75-76]{Fulton_Harris}.  
We summarize the results  within our context.  The bracket map $V \otimes \Lambda^2 V \to V_3$
which sends $v \otimes \omega \to [v,\omega] = v \otimes \omega - \omega \otimes v$
is onto, but as soon as $\dim(V)>2$ it is not injective due to the Jacobi identity.
We want to describe the image $V_3$ of the bracket map.  There is a  canonical inclusion   $i: V \otimes \Lambda^2 V \to V^{\otimes 3}$, namely the identity
 $v \otimes \omega \mapsto v \otimes \omega$, whose image contains   $V_3$. 
 To cut    $V \otimes \Lambda ^2 V \subset V^{\otimes 3}$ down to  $V_3$ we must add linear conditions which encode the   Jacobi
identity.   Consider the   canonical projection  map $\beta: V^{\otimes 3} \to \Lambda ^3 V$ which sends
$v_1 \otimes v_2 \otimes v_3$ to $v_1 \wedge v_2 \wedge v_3$.  Then the   Jacobi identity is $\beta = 0$, so that $V_3 = im(i) \cap ker(\beta)$.


Let us now go to the specific case of $\dim(V) = 3$.  Here $\dim(V \otimes \Lambda ^2 V) = 3 \times 3 = 9$, whereas
 $\dim(V_3) = 8$.  In this case the Jacobi identity is `one-dimensional'.   We show how to  identify $V_3$ with ${\mathfrak sl}(3)$
by fixing a volume form on $V$.   Write coordinates $x,y,z = x_1, x_2, x_3$ on $V$ and take as the resulting volume form
$\mu =dx_1 \wedge dx_2 \wedge dx_3$.  The choice of form both singles out $SL(3) \subset GL(3) = GL(V)$ and
yields a canonical identification $\Lambda^2 V \cong V^*$ by sending $v \wedge w$ to the one-form $\mu(v, w, \cdot)$.
Thus $V \otimes \Lambda^2 V \cong V \otimes V^* = {\mathfrak {gl}}(V)$  as an $SL(3)$ representation space, with $SL(3) = SL(V)$
acting by conjugation on ${\mathfrak {gl}}(V)$. For example,
$\partial_j \otimes (\partial_1 \wedge \partial_2)$ is sent to the element $\partial_j \otimes dx_3$ under this identification.
One verifies that the kernel of $\beta$ is equal to the span of the identity element $I = \partial_1 \otimes dx_1 + \partial_2 \otimes dx_2 + \partial_3 \otimes dx_3$
under this identification. Thus $V_3 \cong {\mathfrak {gl}}(V)/ \R I$.  Next, observe that as an $SL(V)$
(or $GL(V)$) representation space we have:  $V \otimes V^* = {\mathfrak {sl}}(V) \oplus \R { I}$ where
${\mathfrak {sl}}(V)$ consists of those matrices with trace zero.  Thus $V_3 = {\mathfrak {gl}}(V)/ \R I = {\mathfrak {sl}}(V)$, as   $SL(V)$ representation spaces.
Notice that as $GL(V)$ representation spaces this equality does not hold since the   element $\lambda I \in GL(V)$ acts on $V_3$ by
$\lambda^3 I$, while under conjugation the same element  acts on ${\mathfrak {sl}}(V)$ as the identity. An investigation of what $\ad_\xi$
looks like in relation to this $SL(3)$-equivariant decomposition led to the specific element $\xi$ defined at the end of Section~\ref{xi}.

To get to the equations describing  abnormality for $F_{3,3}$,
we write its Lie algebra as
\[\mathfrak{f}_{3,3} = V_1 \oplus V_2 \oplus V_3  =  \R^{3} \oplus \R^{3 *} \oplus {\mathfrak {sl}}(3)\]
and so an element of the dual Lie algebra can be written out as
\[\lambda = (\lambda_1, \lambda_2, \lambda_3) \in \mathfrak{f}_{3,3} ^* = V_1 ^{*} \oplus V_2 ^{*}\oplus V_3 ^{*} =  \R^{3 *} \oplus \R^{3} \oplus {\mathfrak {sl}}(3) ^{*}.\]
For this covector to lie along an   abnormal extremal it must  be $\lambda_1 =0$.

We  partition the  abnormal extremals into two classes: those for which $\lambda_2 \ne 0$, which we
call {\it regular abnormal extremals} following Liu-Sussmann,  and those for which $\lambda_2 = 0$.
The Hamiltonian  
\[H = P_1 P_{23} + P_2 P_{31} + P_3 P_{12}\]
generates all  the regular abnormal extremals.
Here 
\[\lambda_1 = (P_1, P_2, P_3)\]
\[\lambda_2 = (P_{23} ,  P_{31} ,  P_{12}).\]
and
\[P_i = P_{X_i} \quad P_{ij} = P_{X_{ij}} = -P_{ji}\]
where we are following the notation of \eqref{momentum_fns} and \eqref{Hall_basis}.
When we say that $H$ ``generates'' the regular abnormal extremals we mean  two things:
(A) the Hamiltonian flow of $H$ preserves the locus $\lambda_1 = 0$, i.e.,   the locus $\Delta ^{\perp} = \{ P_1 = P_2 = P_3 = 0 \}$
and (B) on the locus $\lambda_1 = 0$,  $\lambda_2 \ne 0$, a  unique - up to reparameterization - abnormal extremal passes through every point,
with the extremal  through $(0, \lambda_2, \lambda_3)$ being  the  solution to Hamilton's
equations for this Hamiltionian $H$  with initial conditions $\lambda$.

We follow a Hamiltonian trick that Igor Zelenko kindly showed us for both finding  $H$
and for validating  claims (A) and (B). Start with the   Maximum Principle characterization of  
   abnormal extremals   discussed in Section~\ref{Ham_formalism}.  According to this principle,
an abnormal with control $u(t)$ is a solution to Hamilton's equations
having the time dependent  Hamiltonian $H_u = u_1 P_1 + u_2 P_2 + u_3 P_3$
and lying in the common level set $P_1 = 0, P_2 = 0, P_3 =0$.  From Hamilton's equations
we find
that 
\[\dot P_1 = \{P_1, H_u\} = - u_2 P_{12} - u_3 P_{13}\]
\[\dot P_2 = \{P_2, H_u\} = -u_1 P_{21} - u_3 P_{23}\]
\[\dot P_3 = \{P_3, H_u \} = -u_1 P_{31} -u_2 P_{32}\]
But we must have that $\dot P_i = 0$.  Consequently
$(u_1, u_2, u_3)$ must lie in the kernel of the skew-symmetric matrix
whose entries are $P_{ij}$.  As long as this matrix is not identically zero,
its kernel is one-dimensional and is spanned by $(P_{23}, P_{31}, P_{12})$. It follows that:
\[(u_1, u_2, u_3) = f (P_{23}, P_{31}, P_{12}), f \ne 0.\]
Since the parameterization of the abnormal is immaterial, we may  take  $f = 1$.
Plugging our expression for $u$ back in to $H_u$ yields the form of $H$ above.

We  can  write down the ODEs governing the regular abnormal extremals, using this $H$.
We have just seen that
\[u = \lambda_2 = (P_{23}, P_{31}, P_{12})\]
describes  the controls, i.e., the moving element of $V$.   
This control evolves according to
\[
\dot u = A u
\]
where $A$ is a constant matrix in $SL(3)$.
These are to be supplemented by the understanding of what
the resulting abnormal extremal is 
\[\lambda_1 = 0, \lambda_2 = u, \lambda_3 = A.\]
We want to establish  Hamilton's equations, using this $H$.  
 For doing so, we compute 
 $\dot P_{ij} = \{P_{ij}, H \}$
 and $\dot P_{ijk} = \{P_{ijk}, H\} = 0$ where $P_{ijk} = P_{X_{ijk}}$.  The first equation
 results in a bilinear pairing between $P_{ij}$ and $P_{ijk}$
 which, when the $P_{ijk}$ are properly interpreted as an element
 $A \in SL(3)$, is matrix multiplication.
 
\subsection{Computation of  abnormals not lying in any subgroup}
\label{not lying in any subgroup}
 Take a diagonalizable $A$ with distinct nonzero eigenvalues $a, b, c$, 
   $a+ b+c  =0$.  For simplicity, let it be 
   ${\rm diag}(a,b,c)$ relative to our choice of coordinates for $V$.
     Then $u$ evolves according to 
     $u(t) = (A e^{at}, B e^{bt}, C e^{ct})$.
     We may suppose that none of $A, B, C$ are zero by assuming that
     no components of $\lambda_2 = u(0)$ are zero. 
     The corresponding curve in $G$ passing through $e = 0$, 
     projected onto the first level  is 
  the curve $x_1 = \frac{1}{a} (A(e^{at} -1)$, $x_2 = \frac{1}{b} (B(e^{bt} -1)$, $x_3 = \frac{1}{c} (C(e^{ct} -1)$.
  Since the       functions $1, e^{at}, e^{bt}, e^{ct}$ are linearly independent ,
  the curve projected to  the first level  
   cannot lie in any proper subspace of $V$, which in turn implies  that the entire abnormal  curve cannot lie in any proper subgroup of
   $G$.  

Alternatively, one can directly use
Corollary~\ref{right_invariant_interpretation}.
In fact,  with the notation of 
Section~\ref{sec:applications}, 
 one can take $\lambda =e_{21}^* - e_{31}^* + e_{32}^* - c e_{213}^* + b e^*_{312}$ to prove that the curve with control
 $u(t) = ( e^{(-b -c)t},  e^{bt},  e^{ct})$ is abnormal.

 \subsubsection{The characteristic viewpoint}

We put forth one further  perspective on abnormal extremals which makes the computation  just done
more transparent.  Take any polarized manifold $(Q,\Delta)$.  Take the annihilator bundle of $\Delta$, denoted $\Delta^{\perp} \subset T^*Q$.
Restrict the canonical symplectic form $\omega$ of $T^*Q$ to $\Delta^{\perp}$.  Call this
restriction $\omega_{\Delta}$.  Then the abnormal extremals are precisely the (absolutely continuous)
characteristics for $\omega_{\Delta}$, that is the curves in $\Delta^{\perp}$ whose tangents are a.e. in $\ker(\omega_{\Delta})$.   
Let $\pi: \Delta^{\perp} \to Q$ be the canonical projection.  Then a linear algebra computation shows that $d \pi_{(q, \lambda)}$ projects
$\ker(\omega_{\Delta})(q, \lambda)$  linearly isomorphically onto $\ker(w_q (\lambda)) \subset \Delta_q$
where $\lambda \in \Delta_q ^{\perp} \mapsto w_q (\lambda) \in \Lambda ^2 \Delta_q ^*$ is the operator called the
``dual curvature'' in \cite{Montgomery}. In the case of a polarized group $(Q, \Delta) = (G, V)$
we have that $w_q (\lambda)$ is the   two-form of Equation \eqref{curv_map} for $\lambda = \eta \in V^{\perp}$. 

In our situation $V$ has dimension $3$ so that   $w(\lambda)$ has either rank 2 or 0 and thus  its
kernel has dimension  1 or 3.    The kernel has dimension 1 exactly when $\lambda_2 \ne 0$, and rank 3 exactly when $\lambda_2 = 0$.
Along the points where $\lambda_2 \ne 0$ the kernel of $\omega_{\Delta}$ is a line field, and the Hamiltonian vector field $X_H$
for $H$ above rectifies this line field.  Note that $X_H$ vanishes exactly along the variety $\lambda_2 = 0$.

\section{Open problems}  \label{sec:open}					 
Is $\Abn(e)$ , the set of endpoints of abnormal extremals
leaving the identity, a closed analytic variety in $G$ when $G$ is a   simply connected   polarized Lie group?  
 In all   examples  computed, the answer is `yes'.  
 However,  even the following   more basic  questions are still open.
 
Is $\Abn(e)$ closed?
		
Can $\Abn(e)$ be the entire group $G$?

Concerning the importance of the adjective  ``simply connected'' above, consider the torus. 
Any integrable distribution $V$ whose corank is 1 or greater    on any space  $G$ has its $\Abn(e)$ the leaf through $e$.
Consequently an irrationally oriented polarization $V$ on the torus has for its $\Abn(e)$ a set that  is neither closed nor analytic.

We also wonder wether statements 5 and 6 of Theorem~\ref{main_thm1} can be upgraded to algebraic.		

Can one unify (6) and (7) having the result for all semisimple groups?

If $G$ and $H$ are polarized Lie groups having the Sard Property, does any semidirect product $G\rtimes H$ have  the Sard Property?
		
Finally, in the particular case of rank 2 Carnot groups, what is the minimal codimension of $\Abn(e)$?


%
%
%


%
%
%
%
%
%
%
%

\bibliography{general_bibliography}

\def\cprime{$'$} \def\cprime{$'$} \def\cprime{$'$} \def\cprime{$'$}
  \def\cprime{$'$}
\providecommand{\bysame}{\leavevmode\hbox to3em{\hrulefill}\thinspace}
\providecommand{\MR}{\relax\ifhmode\unskip\space\fi MR }
\providecommand{\MRhref}[2]{%
  \href{http://www.ams.org/mathscinet-getitem?mr=#1}{#2}
}
\providecommand{\href}[2]{#2}
\begin{thebibliography}{LDLMV14}

\bibitem[AGL13]{Agrachev_Lerario_Gentile}
Andrei Agrachev, Alessandro Gentile, and Antonio Lerario, \emph{Geodesics and
  admissible-path spaces in {C}arnot groups}, Preprint, arXiv:1311.6727 (2013).

\bibitem[Agr09]{Agrachev09}
Andrei Agrachev, \emph{Any sub-{R}iemannian metric has points of smoothness},
  Dokl. Akad. Nauk \textbf{424} (2009), no.~3, 295--298.

\bibitem[Agr13]{Agrachev_problems}
\bysame, \emph{Some open problems}, Preprint, arXiv:1304.2590 (2013).

\bibitem[Arn71]{Arnold_1971}
V.~I. Arnol{\cprime}d, \emph{Matrices depending on parameters}, Uspehi Mat.
  Nauk \textbf{26} (1971), no.~2(158), 101--114.

\bibitem[AS96]{Agrachev_Sarychev}
A.~A. Agrachev and A.~V. Sarychev, \emph{Abnormal sub-{R}iemannian geodesics:
  {M}orse index and rigidity}, Ann. Inst. H. Poincar\'e Anal. Non Lin\'eaire
  \textbf{13} (1996), no.~6, 635--690.

\bibitem[AS04]{Agrachev_Sachkov}
Andrei~A. Agrachev and Yuri~L. Sachkov, \emph{Control theory from the geometric
  viewpoint}, Encyclopaedia of Mathematical Sciences, vol.~87, Springer-Verlag,
  Berlin, 2004, Control Theory and Optimization, II.

\bibitem[BCR98]{BCR}
Jacek Bochnak, Michel Coste, and Marie-Francoise Roy, \emph{Real algebraic
  geometry}, Ergebnisse der Mathematik und ihrer Grenzgebiete (3) [Results in
  Mathematics and Related Areas (3)], vol.~36, Springer-Verlag, Berlin, 1998,
  Translated from the 1987 French original, Revised by the authors.

\bibitem[Bou98]{Bourbaki_Lie}
Nicolas Bourbaki, \emph{Lie groups and {L}ie algebras. {C}hapters 1--3},
  Elements of Mathematics (Berlin), Springer-Verlag, Berlin, 1998, Translated
  from the French, Reprint of the 1989 English translation. \MR{1728312
  (2001g:17006)}

\bibitem[FH91]{Fulton_Harris}
William Fulton and Joe Harris, \emph{Representation theory}, Graduate Texts in
  Mathematics, vol. 129, Springer-Verlag, New York, 1991, A first course,
  Readings in Mathematics.

\bibitem[GK95]{Gole_Karidi}
Chr. Gol{\'e} and R.~Karidi, \emph{A note on {C}arnot geodesics in nilpotent
  {L}ie groups}, J. Dynam. Control Systems \textbf{1} (1995), no.~4, 535--549.

\bibitem[Gro96]{Gromov1}
Mikhail Gromov, \emph{{C}arnot-{C}arath\'eodory spaces seen from within},
  Sub-Riemannian geometry, Progr. Math., vol. 144, Birkh\"auser, Basel, 1996,
  pp.~79--323.

\bibitem[Kna02]{Knapp}
Anthony~W. Knapp, \emph{Lie groups beyond an introduction}, second ed.,
  Progress in Mathematics, vol. 140, Birkh\"auser Boston Inc., Boston, MA,
  2002.

\bibitem[LDLMV13]{LLMV}
Enrico Le~Donne, Gian~Paolo Leonardi, Roberto Monti, and Davide Vittone,
  \emph{Extremal curves in nilpotent {L}ie groups}, Geom. Funct. Anal.
  \textbf{23} (2013), no.~4, 1371--1401.

\bibitem[LDLMV14]{LLMV2}
\bysame, \emph{Extremal polynomials in stratified groups}, Preprint, submitted
  (2014).

\bibitem[Mon94]{Montgomery_Singular_extremals_on_Lie_groups}
Richard Montgomery, \emph{Singular extremals on {L}ie groups}, Math. Control
  Signals Systems \textbf{7} (1994), no.~3, 217--234.

\bibitem[Mon02]{Montgomery}
\bysame, \emph{A tour of subriemannian geometries, their geodesics and
  applications}, Mathematical Surveys and Monographs, vol.~91, American
  Mathematical Society, Providence, RI, 2002.

\bibitem[RT05]{Rifford_Trelat}
L.~Rifford and E.~Tr{\'e}lat, \emph{Morse-{S}ard type results in
  sub-{R}iemannian geometry}, Math. Ann. \textbf{332} (2005), no.~1, 145--159.

\bibitem[TY13]{Tan_Yang_step_3}
Kanghai Tan and Xiaoping Yang, \emph{Subriemannian geodesics of {C}arnot groups
  of step 3}, ESAIM Control Optim. Calc. Var. \textbf{19} (2013), no.~1,
  274--287.

\end{thebibliography}
\bibliographystyle{amsalpha}
\end{document}